\newcommand{\cO}{{\frak O}}
\journalname{Bulletin of Mathematical Biology}
\begin{document}
		\title{Forest-based networks}
		
\author{K.\,T.\,Huber \and V.\,Moulton \and G.\, E. \,Scholz}
\institute{K.T. Huber \at
	UEA, Norwich, UK. \\
	\email{k.huber@uea.ac.uk}
	\and 
V. Moulton \at
UEA, Norwich, UK. \\
\email{k.huber@uea.ac.uk}
\and 
G. Scholz \at
Bioinformatics Group, Department of Computer Science, Interdisciplinary
Center for Bioinformatics, Leipzig University, Leipzig, Germany.\\
\email{gllm.scholz@gmail.com}
}

\date{\today}
% The correct dates will be entered by the editor
\maketitle

\begin{abstract}
In evolutionary studies it is common to use phylogenetic trees to represent the evolutionary history of a set of species. However, in case the transfer of genes or other genetic information between the species or their ancestors has occurred in the past, a tree may not provide a complete picture of their history. In such cases {\em tree-based phylogenetic networks} can provide a useful, more refined representation of the species’ evolution. Such a network is essentially a phylogenetic tree with some arcs added between the tree's edges so as to represent reticulate events such as gene transfer. Even so, this model does not permit the representation of evolutionary scenarios where reticulate events have taken place between different subfamilies or lineages of species. To represent such scenarios, in this paper  we introduce the notion of a {\em forest-based phylogenetic network}, that is, a collection of leaf-disjoint  phylogenetic trees on a set of species with arcs added between the edges of distinct trees within the collection. Forest-based networks include the recently introduced class of {\em overlaid species forests} which are used to model introgression.  As we shall see, even though the definition of forest-based networks is closely related to that of tree-based networks, they lead to new mathematical theory which complements that of tree-based networks. As well as studying the relationship of forest-based networks with other classes of phylogenetic networks, such as tree-child networks and universal tree-based networks, we present some characterizations of some special classes of forest-based networks. We expect that our results will be useful for developing new models and algorithms to understand reticulate evolution, such as gene transfer between collections of bacteria that live in different environments.

\keywords{
	phylogenetic network \and lateral gene transfer \and forest-based network\and tree-based network}		
	%\MSC[2008] 05C90\sep 92D15
	%% or \MSC[2008] code \sep code (2000 is the default)

\end{abstract}

\section{Introduction}

In evolutionary biology, it is common to represent the evolution of a set of present-day species using a {\em phylogenetic tree}, that is a rooted, graph-theoretical tree whose leaves correspond to the species \cite{S16}. In recent years however, it has become increasingly recognized that phylogenetic trees may not provide an adequate means to represent the evolution of set of species in case the species or their ancestors have transferred or shared genetic material between one another in the past. This type of evolution is sometimes called {\em reticulate evolution}, and it includes evolutionary processes such as gene transfer between bacteria, hybridization between plants and recombination of viruses. Phylogenetic trees are not able to fully represent this type of evolution since they can only represent speciation or branching events (see e.g. \cite[Chapter 4]{huson2010phylogenetic}), and reticulate events require a graph where ancestors come together.

Despite this issue, phylogenetic trees can still be used as a starting point to represent reticulate evolution by, for example, taking some phylogenetic tree and 
then adding in extra edges to represent reticulate events (see e.g. \cite{makarenkov2001t}). We illustrate this in Figure~\ref{illustrate}(i), where 
we have started with a base-tree representing the evolution
of a hypothetical collection of bacteria, and added in 
some dashed arcs between arcs in the tree so as to represent past events 
where genes have been laterally transferred
between ancestral species (see e.g. \cite{kunin2005net}, \cite{makarenkov2021horizontal}
for  some real-world examples in bacteria and viruses, respectively). 
Mathematically speaking, the resulting 
graph theoretical structure is an example of a {\em phylogenetic
network}, that is, a rooted, directed acyclic graph with leaf-set 
corresponding to the present-day species (see e.g. \cite{S16}).
Note that directed cycles are not allowed in such networks 
since, for example, a species cannot be an ancestor of itself.
 
Phylogenetic networks that are created by adding in 
edges to a phylogenetic tree to form a network
are called {\em tree-based networks} \cite{FS15}. Since 
their formal introduction in \cite{FS15}, tree-based networks
have created a lot of interest in the literature. For example, it is known 
that not every phylogenetic network 
is tree-based \cite{vi-web}, and as a result several elegant
characterizations of tree-based networks have been developed 
(see e.g. \cite{FSS18}, \cite{FS15}, \cite{HS20} \cite{pons2019tree}, \cite{Z16}).
In addition, efficient algorithms have been presented for
deciding whether or not a phylogenetic network is tree-based (see e.g. \cite{FS15} and \cite{JvI18}).
There are also several results concerning the relationship
between tree-based networks and other special classes of phylogenetic 
networks, as well as structural results on 
spaces of tree-based networks (see e.g. \cite{fischer2020space}, \cite[Corollary 10.18]{S16}).
For a brief review of tree-based networks see \cite[Section 10.4.2]{S16}

%%%%%%%%%%%%
\begin{figure}[h]
	\begin{center}
		\includegraphics[scale=0.6]{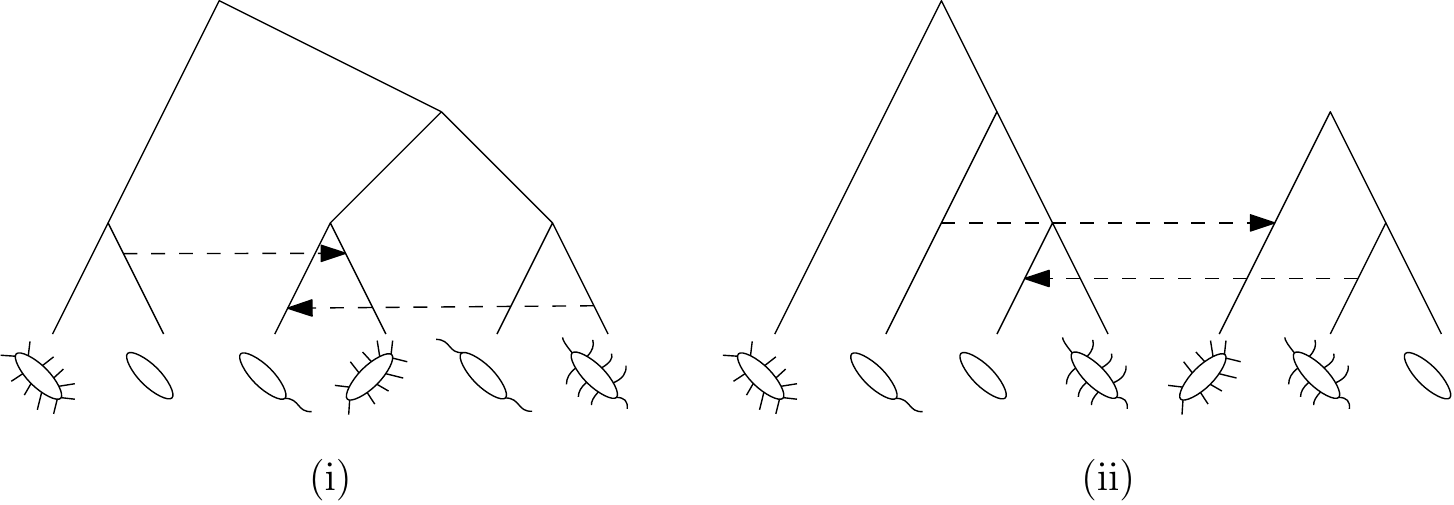}
		\caption{(i) A tree-based network and (ii) a forest-based network 
			for a collection of bacteria. The dashed arrows indicate a lateral
			gene transfer event between bacterial ancestors.}\label{illustrate}
	\end{center}
\end{figure}
%%%%%%%%%%%%%	

Recently, {\em overlaid species forests} were introduced \cite{scholz2019osf}, 
which are closely related to tree-based networks (see also \cite{huber2020overlaid}). 
More specifically, in evolutionary studies it can be of interest to
understand how species within subfamilies of species (sometimes called
lineages or clades), swap genetic material between one another. 
For example, through the evolutionary process known as introgression
animals such as butterflies in one lineage can incorporate genes from other 
those in other lineages that give 
rise to new traits such as wing patterns \cite{wallbank2016evolutionary}
(see \cite{scholz2019osf} for more details concerning introgression).
To model this process, instead of adding  arcs to a
phylogenetic tree, it was proposed to 
add arcs between {\em different} trees within a collection of leaf-disjoint
phylogenetic trees or {\em phylogenetic forest}, for short. 
We call a network that results in this way a {\em forest-based network}. 
These networks generalize the notion of a phylogenetic 
network by permitting a network to have multiple roots. As we
shall see in Section~\ref{sec:forest-based},   forest-based networks 
also generalize overlaid species forests  since there are forest-based networks
that are not of this type.

To illustrate the concept of a forest-based network we return
to the hypothetical example of lateral gene transfer mentioned above. 
Suppose that each component of a phylogenetic forest 
is a phylogenetic tree for some collection of bacteria living in a certain
environment (say  in the human mouth or gut). Then 
the forest-based network in Figure~\ref{illustrate}(ii)
represents how the collections of bacteria living in different environments have swapped 
genes between one another in the past (whereas the tree-based network 
on the left represents lateral gene transfer within 
a single environment). In \cite{jeong2019horizontal}
such swaps are called inter-niche lateral gene transfer events.
Although at first sight, the concept of a forest-based network 
appears to be a relative simple modification of the definition of 
a tree-based network, in this paper we shall see that its
study requires the development of some interesting new theory.

We now summarize the contents of the rest of this paper.
In Section~\ref{sec:prelims}, we introduce basic terminology and notation. 
In Section~\ref{sec:forest-based}, we then present the
formal definition of forest-based networks and investigate some of their basic properties, 
for example, showing that every forest-based network has a 
special type of base forest  (Theorem~\ref{paths}).
In Section~\ref{sec:relationship}, we consider the relationship between 
forest-based {\em phylogenetic} networks (i.e. networks with a single root),
and other well-known classes of phylogenetic networks, including
tree-based networks and so-called tree-child networks \cite{cardona2008comparison}. 
In Sections~\ref{sec:arboreal} and~\ref{sec:cluster-arboreal}, we consider {\em arboreal networks} a
special class of forest-based networks whose underlying, undirected graph is a tree.
In particular, in Theorem~\ref{arboreal} we characterize arboreal 
networks that are forest-based, and in Theorem~\ref{lm-club} we show 
that two arboreal networks induce
the same set of clusters if and only if they are both forest-based.

In Section~\ref{sec:charaterize}, we consider the problem of characterizing 
forest-based networks. More specifically, in Theorem~\ref{theo:general}
we characterize proper forest-based networks, that is,  forest-based networks 
with $m\ge 2$ roots which are based on a phylogenetic forest which has $m$ components.
We also show that there is a simple characterization for forest-based networks in case $m=2$ which 
can be given in terms of the existence of a 2-coloring of a certain graph that can be 
associated to any network (Theorem~\ref{col2}).
This is somewhat similar to the characterization of binary tree-based networks given in \cite{JvI18}.
In Section~\ref{sec:universal}, we then turn our attention to 
the concept of universal forest-based networks, that is 
networks that contain all possible phylogenetic forests as a base forest.
These are a natural generalization of universal tree-based networks, which
contain every possible phylogenetic tree as a base tree.  
Although universal tree-based networks always exist \cite{H16,Z16}, 
in Section~\ref{sec:universal} we show there
are no universal forest-based networks with four or more leaves (Theorem~\ref{no-universal}). 
In Section~\ref{sec:conclusion} we conclude by 
presenting some potential directions for future work.

\section{Preliminaries}
\label{sec:prelims}

Throughout this paper, we assume that $X$ is a non-empty, finite set, which
can be thought of as a collection of species.

We shall use standard terminology from graph theory (see e.g. \cite[Section 1.2]{S16}).
Unless stated otherwise, we assume 
that all graphs are directed and that they have no parallel arcs or loops. 
Suppose $G$ is a graph.  We denote the vertex 
set of $G$ by $V(G)$ and its set of arcs by $A(G)$. 
Suppose $u,v\in V(G)$.  We denote an arc $a$ from $u$
to $v$ by $a=(u,v)$, and refer to
$u$ and $v$ as the {\em end vertices} of $a$
and $v$ and $u$ as the {\em head} and {\em tail} of $a$, respectively.
We say that $v$ lies \emph{below} $u$
if there exists a directed path in $N$ from $u$ to $v$ (so, in particular,
$v$ is below $v$).  If, in addition, $v\not=u$ then we say that $v$ 
lies {\em strictly below} $u$. We call $u$ an \emph{ancestor} 
of $v$ if $v$ is below $u$.
If $u$ and $v$ are such that $(u,v)$ is an arc of $N$, 
then we call $u$ a \emph{parent} of $v$ and $v$ a \emph{child} of $u$.

For $v\in V(G)$, we refer to the number of arcs with head $v$ as the
{\em indegree of $v$}, denoted by $indeg(v)$,
and to the number of arcs with tail $v$ the {\em outdegree
of $v$}, denoted by $outdeg(v)$. We call $v$ a {\em leaf} 
of $G$ if $indeg(v)=1$ and $outdeg(v)=0$, unless
$V(G)=\{v\}$ in which case we also call $v$ a leaf. 
We denote by $L(G)$ the set of 
all leaves of $G$. In case $v$ is not a leaf
of $G$ we call $v$ an {\em internal vertex} of $G$. 
If $indeg(v)= 1$, then we refer to $v$ as a 
{\em tree vertex} of $G$, and if $indeg(v)=0$, 
then we call $v$ a {\em root} of $G$. 
%Note that in the special case where $G$
%is the graph with a single vertex, we regard this
%vertex as being an internal vertex of $G$.
We call every internal vertex
of $G$ that is neither a root nor a tree-vertex
a {\em hybrid vertex} of $G$.
The set of all roots of $G$ is denoted by $R(G)$
and the set of all hybrid vertices of $G$ is by $H(G)$.
We say that $G$ is {\em semi-binary} if $G$ consists of a single vertex or 
if every hybrid vertex of $G$ has indegree two and outdegree one, and $G$ is {\em binary}
if, in addition to being semi-binary, every root and
every non-leaf tree vertex has outdegree two.

We say that $G$ is {\em acyclic} if it contains
no directed cycles, and call $G$ a {\em tree} if it 
has a single root, all arcs in $G$ are 
directed away from the root, and the underlying, undirected 
graph of $G$ is a tree (note that we regard a vertex as being a tree).  
We call $G$ a {\em forest} if it has at least two connected 
components and all of its connected components 
are trees. For convenience, we will 
sometimes also regard a forest as being the set of trees which
make up its components. 

A {\em multiply rooted phylogenetic network $N$ (on $X$)} or 
{\em network (on $X$)}, is a  semi-binary, connected, acyclic graph 
with leaf set $X$ and at least one root, in 
which every root in $R(N)$ has outdegree 
at least 2. In case the number $m=|R(N)|$ of roots
in a network $N$ is of relevance to the
discussion, we sometimes also call $N$ an \emph{$m$-network (on $X$)}.
If $N,N'$  are networks on $X$, then 
we say that $N$ and $N'$ are {\em equivalent} if there exists 
a bijective map $\psi:V(N)\to V(N')$ that induces a graph isomorphism 
between $N$ and $N'$ and that is the identity on $X$.
If $|R(N)|=1$, then $N$ is called a \emph{phylogenetic 
network (on $X$)}. In this case, we denote the root of $N$ by $\rho(N)$. If $N$ is such that $H(N)$ is empty, 
then we call $N$ a \emph{phylogenetic tree (on $X$)}.
Note that in the special case where $X=\{x\}$, we regard the graph
with the single vertex $x$ as a phylogenetic tree on $X$
with leaf and root vertex $x$. 
A {\em phylogenetic forest $F$ (on $X$)} is a set consisting of at least two
phylogenetic trees so that $L(T) \cap L(T') =\emptyset$ for all $T,T' \in F$, and 
$\bigcup_{T\in F} L(T) =X$.

We conclude this section by introducing two operations on a graph. Suppose that $G$ is a graph 
and that  $a=(u,v)$ is an arc of $G$. Then we refer to the process of deleting $a$, adding 
a new vertex $w$, and adding arcs $(u,w)$ and $(w,v)$ as {\em subdividing} $a$.
In this case, we also refer to $w$ as a {\em subdivision vertex} of $a$. We call 
a graph $G'$ a \emph{subdivision} of $G$ if $G'$ is 
isomorphic to a graph that can be 
obtained from $G$ via a finite sequence 
of subdivisions. Furthermore,
we refer to the process that reverses subdivision (i.e. for a 
vertex $v\in V(G)$ with indegree and outdegree one, delete $v$ 
and its incoming and outgoing arcs and add a new arc from the parent of $v$ to the child of $v$) 
as {\em suppressing of $v$}. We also refer to the process that
removes a root $\rho$ with outdegree 1 in a graph
and the arc with tail $\rho$  as suppression.

\section{Forest-based networks}
\label{sec:forest-based}

In this section, we formally define forest-based networks and 
present two basic results concerning their structure.
Note that the concepts that we use to define a forest-based network 
are closely related to the ones used to define a tree-based network in \cite[p.257]{S16}.

We define a network $N=(V,A)$ on $X$ to be {\em forest-based} if
there exists a subset $A' \subseteq A$ such that $F'=(V,A')$ is a
forest with the same leaf set as $N$, and so that 
every arc in $A-A'$ has end vertices contained in different trees of $F'$. 
Note that this implies $|X| \ge 2$.
We call $F'$ a {\em subdivision forest} for $N$,
the arcs in $A-A'$ {\em contact arcs} and the vertices in $F'$
with indegree and outdegree both equal to one {\em subdivision vertices (of $F$)}.
We call the phylogenetic forest $F$ on $X$ that we obtain
by repeatedly suppressing all subdivision vertices and outdegree one roots in
each component of $F'$ until we obtain a phylogenetic tree 
a {\em base forest} for $N$. We also say that $N$ is {\em based on $F$}, and that the
forest $F'$ provides an {\em embedding} of $F$ into $N$.
Note that $X=L(F)=L(F')$, and that in case a component $C$ of $F$ consists of
a single element, then the component of $F'$ which
gives rise to $C$ is necessarily a path. For $m\geq 2$, 
we call an $m$-network $N$  {\em proper} forest-based if it 
contains a {\em proper} base forest, that is, a  base forest
with $m$ roots. See Figure~\ref{change} for illustrations of these
concepts.
	
%%%%%%%%%%%%
\begin{figure}[h]
	\begin{center}
		\includegraphics[scale=0.6]{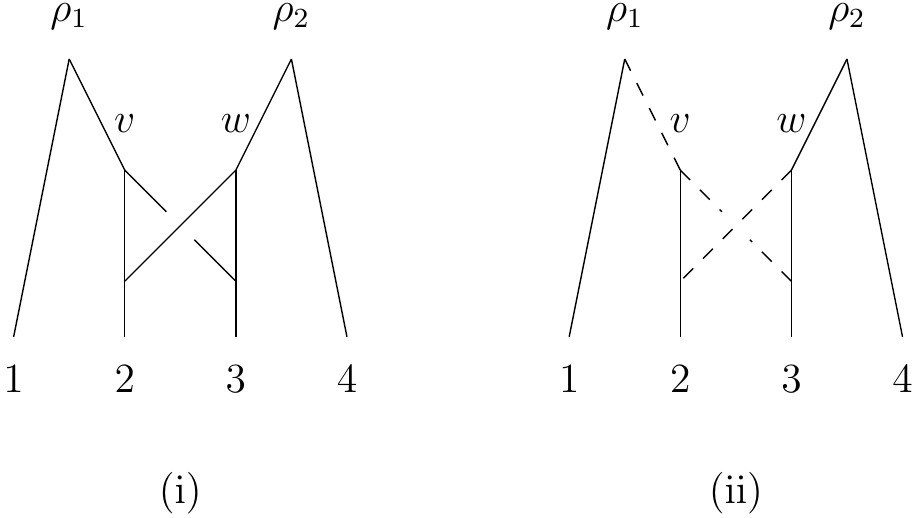}
		\caption{(i) A forest-based network $N$ on the set $X=\{1,2,3,4\}$.	
		For example, it is based on the phylogenetic 
			forest $F$ consisting of the isolated vertices labelled $1$ and $2$ and the 
			phylogenetic tree with a single root on $\{3,4\}$. 
			The network is also proper forest-based since it has the proper 
			base forest consisting of the two phylogenetic trees on $\{1,2\}$ and $\{3,4\}$. 
			(ii) An embedding of the forest $F$ into $N$, where
			the contact arcs are indicated as dashed arcs. \label{change}}
			%Note that $N$ is not an OSF.\label{change}}
	\end{center}
\end{figure}
%%%%%%%%%%%%%		
	
Before proceeding we note that not all forest-based networks
are overlaid species forests, and so the concept of
a forest-based network is more general than that considered in \cite{scholz2019osf}
(the formal definition of an overlaid species forest is quite involved and so
we shall not present it here). For example, the forest-based 
network $N$ in Figure~\ref{change}(i) is not an overlaid species forest.
To see this, note that for every embedding of some base forest into $N$, 
one of the arcs with tail $v$ and one of the arcs with tail $w$ must be a contact arc. 
However, one of the conditions for a network to be 
an overlaid species forest is that all contact arcs must share 
an ancestor (cf. \cite[Theorem 5.3]{huber2020overlaid}), which 
is not possible for any pair of contact arcs that have tail $v$ and tail $w$. 
In the next section we shall present some further examples and results which
will elucidate the relationship between forest-based networks
and various other classes of networks.  

We now show that a forest-based network can be thought of as 
a phylogenetic forest with some arcs added in between different components of the forest
(this is analogous to \cite[Proposition 10.16]{S16} for tree-based networks).  

\begin{lemma}
	Suppose $N=(V,A)$ is a network on $X$, $|X|\geq 2$. Then $N$ is forest-based if and only if there is a set 
	$I \subseteq A$ such that $F'=(V,A-I)$ is a forest, every arc in $I$ has
	its end vertices in different trees of $F'$, and for every non-leaf vertex $v$ of $N$,
	there exists an arc with tail $v$ that is not contained in $I$. In particular, 
	if $N$ is {\em binary}, then $N$ is forest-based if and only if there is a set $I \subseteq A$ 
	such that $F'=(V,A-I)$ is a forest,  every arc in $I$ has
	its end vertices in different trees of $F'$, and for every pair of distinct arcs in $ I$ with
	a vertex $v$ in common, $v$ is a root of $F'$ that is not a component of $F'$.
\end{lemma}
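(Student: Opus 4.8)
The plan is to read both equivalences off the definition of a forest-based network after the change of variable $I = A - A'$. Under this substitution the clauses ``$F' = (V, A')$ is a forest'' and ``every arc of $A - A'$ has its end vertices in different trees of $F'$'' of the definition become, word for word, the first two clauses of the statement (with $I$ in the role of the set of contact arcs). Consequently the whole content of the first equivalence is the following: given that $F' = (V, A - I)$ is a forest and that every arc of $I$ joins two distinct trees of $F'$, the remaining requirement of the definition --- that $F'$ have the same leaf set as $N$, that is, $L(F') = X$ --- is equivalent to the requirement that every non-leaf vertex of $N$ has an outgoing arc not in $I$.

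To prove this translation I would treat the two inclusions $L(N) \subseteq L(F')$ and $L(F') \subseteq L(N)$ separately. The inclusion $X = L(N) \subseteq L(F')$ holds automatically, using only $A - I \subseteq A$: every $x \in X$ has outdegree $0$ in $N$ and hence in $F'$, so $x$ is a leaf of whichever component of $F'$ contains it, be that component a single vertex or larger. For the reverse inclusion, a vertex of $L(F')$ not belonging to $X$ would be a non-leaf vertex of $N$ that is a leaf of its component of $F'$, hence of outdegree $0$ in $F'$; and since in a tree with at least two vertices every non-leaf vertex (the root included) has positive outdegree, while a single-vertex component is itself a leaf, a non-leaf vertex of $N$ is a leaf of $F'$ exactly when none of its outgoing arcs survives in $A - I$. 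Thus $L(F') \subseteq X$ is precisely the outgoing-arc condition, and the first equivalence follows. The only point needing care here is the bookkeeping around single-vertex components of $F'$ and the convention that such a vertex counts as a leaf.

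For the binary statement, by the part just proved it suffices to show that, when $N$ is binary and $F' = (V, A - I)$ is a forest such that every arc of $I$ joins two distinct trees of $F'$, the outgoing-arc condition is equivalent to the condition that whenever two distinct arcs of $I$ share a vertex $v$, this $v$ is a root of $F'$ that is not a component of $F'$. In the forward direction I would take two such arcs meeting at $v$: by semi-binarity they cannot both have tail $v$ (a vertex with two outgoing arcs has outdegree exactly $2$, and placing both in $I$ would violate the outgoing-arc condition), so either both point into a hybrid vertex $v$ (indegree $2$, outdegree $1$) or one points into and one out of a non-leaf tree vertex $v$ (indegree $1$, outdegree $2$); in both cases $v$ loses all its incoming arcs in $F'$ but keeps an outgoing one, so it is a root of $F'$ and not a component. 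In the converse direction I would show that the ``non-component root'' condition forbids (i) a hybrid vertex from having its single outgoing arc in $I$ and (ii) any vertex from having both its outgoing arcs in $I$: for (i), a hybrid has indegree $2$ in $N$, so (as $F'$ is a forest) at least one of its incoming arcs already lies in $I$; pairing that arc with the outgoing arc, the condition forces the hybrid to be a root of $F'$ with outdegree $0$, hence a component, a contradiction; case (ii) is analogous. Since every non-leaf vertex of a binary network has outdegree $1$ or $2$, prohibitions (i) and (ii) together are exactly the outgoing-arc condition.

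The step I expect to be the main obstacle is this last case analysis in the binary part, and especially its converse direction, where three constraints must be used together --- that $F'$ is a forest (so no vertex has indegree more than one), that $N$ is semi-binary (fixing the degrees of hybrids, non-leaf tree vertices and roots), and the ``non-component root'' condition --- in order to exclude the forbidden ways of placing arcs into $I$. The general part, by contrast, is a fairly routine unpacking of the leaf-set condition once the single-vertex-component convention has been settled.
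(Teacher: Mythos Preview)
Your argument for the general equivalence is the same as the paper's: both directions reduce to checking that, once $F'$ is a forest and all arcs of $I$ cross between components, the condition $L(F')=L(N)$ is exactly the outgoing-arc condition. The paper's proof in fact stops there and does not spell out the binary ``in particular'' clause at all.

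So your binary case analysis is genuinely additional content. It is correct, with one small point worth making explicit. In the forward direction you split into ``both arcs point into $v$'' versus ``one in, one out, and $v$ is a non-leaf tree vertex''; the case ``one in, one out, and $v$ is a hybrid'' is not impossible a priori, it is ruled out precisely by the outgoing-arc condition (a hybrid has outdegree~$1$, so the outgoing arc cannot lie in $I$). You use this implicitly, but stating it would tighten the exposition. The converse direction is fine: in~(i) you correctly combine the forest property (forcing at least one incoming arc of the hybrid into $I$) with the hypothesis to reach a contradiction with ``not a component'', and~(ii) is indeed analogous. Since in a binary network every non-leaf vertex has outdegree $1$ or $2$, prohibitions~(i) and~(ii) together recover the outgoing-arc condition, as you say.
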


\begin{proof}
	Suppose that $N$ is forest-based. Let $I = A- A'$ for $F' = (V, A')$ some subdivision
	forest for $N$. Then $F'$ is clearly a forest. Suppose $v$ is a non-leaf vertex
	of $N$. Since $F'$ is a subdivision 
	forest for $N$, we have that $L(F')=L(N)$. Hence,
	$v$ is not a leaf of $F'$. Thus, there is an arc with tail $v$ that
	belongs to $A'$.
	
	Conversely, suppose that $I$ is as in the statement of the lemma 
	so that $F' = (V,A-I)$ is a forest. Then clearly $L(N) \subseteq L(F' )$. Now, suppose that $v$
	is a non-leaf vertex of $N$. By assumption, there exists an arc with tail $v$ that
	is not in $I$. In particular, $v$ is not a leaf of $F'$, and so $L(F')=L(N)$.	
\end{proof}

Note that, as we have seen in Figure~\ref{change}, a forest-based network
might have more than one base forest and
different base forests for the network do not necessarily need to have the 
same  number of components. We conclude this 
section by showing that every forest-based network must have a special 
type of base forest with $|X|$ components. For $X$ with $|X| \ge 2$, we define the {\em trivial (phylogenetic) forest}
on $X$ to be the phylogenetic forest in which every component is a vertex (i.e an element of $X$).

\begin{theorem}\label{paths}
	Suppose that $N$ is a network on $X$. Then the following are equivalent 
	\begin{itemize}
	\item[(i)] $N$ is forest-based. 
	\item[(ii)]  $N$ is based on the trivial forest. 
	\item[(iii)] The trivial forest is embedded in $N$ as an union of paths (some possibly of
	length 0), and there is no arc in $N$ joining two non-consecutive vertices of the same path.
	\end{itemize}
\end{theorem}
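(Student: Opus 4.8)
The plan is to prove the cycle of implications (iii) $\Rightarrow$ (ii) $\Rightarrow$ (i) $\Rightarrow$ (iii), since the first two are essentially immediate and the real content lies in (i) $\Rightarrow$ (iii). For (iii) $\Rightarrow$ (ii): if the trivial forest is embedded in $N$ as a disjoint union of paths $P_x$, one for each $x \in X$, with no arc of $N$ joining two non-consecutive vertices of the same path, then $F' = \bigcup_{x\in X} P_x$ is a subdivision forest for $N$: it is a forest (a disjoint union of paths is a forest once $|X|\ge 2$), its leaf set is $X$, and every arc of $N$ outside $F'$ must have its end vertices in different paths — indeed, an arc inside a single $P_x$ would, if it is not already an arc of the path, join two non-consecutive vertices of that path, contradicting the hypothesis. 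Suppressing subdivision vertices and outdegree-one roots in each $P_x$ collapses it to the isolated vertex $x$, so the base forest obtained is the trivial forest, giving (ii). The implication (ii) $\Rightarrow$ (i) is trivial from the definition of forest-based.

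The substantive step is (i) $\Rightarrow$ (iii). Start with an arbitrary subdivision forest $F'=(V,A')$ for $N$ with base forest $F$. The idea is to progressively "unglue" the branching structure of $F'$ into paths while keeping $F'$ a valid subdivision forest. The key observation: each component $T$ of $F'$ is a subdivision of a phylogenetic tree on some subset $X_T \subseteq X$; for each leaf $x \in X_T$ let $Q_x$ be the directed path in $T$ from the root of $T$ down to $x$. These paths need not be disjoint. To make them disjoint I would walk down from the root: at each tree vertex $v$ of $T$ with $\mathrm{outdeg}(v) = k \ge 2$, the subtree at $v$ splits among the leaves below $v$; I want to replace $v$ by $k$ copies, one per outgoing arc, so that the leaves below different children no longer share ancestors. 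Concretely, one repeatedly applies the following local move: pick a vertex $v$ of $F'$ with $\mathrm{indeg}_{F'}(v) = 1$ and $\mathrm{outdeg}_{F'}(v) \ge 2$ (a branching tree vertex), say with incoming arc $(p,v)$ and outgoing arcs $(v,c_1),\dots,(v,c_k)$; create $k$ new vertices $v_1,\dots,v_k$, delete $v$ (and its $F'$-arcs), and add $F'$-arcs $(p',v_1),(v_1,v_2),\dots,(v_{k-1},v_k)$ together with $(v_i, c_i)$ for each $i$ — i.e. string the copies into a path and hang $c_i$ off $v_i$. Every arc of $N$ that previously had tail $v$ gets reassigned: the tree-arc $(v,c_i)$ becomes $(v_i,c_i)$, and any contact arc with tail $v$ is attached to, say, $v_1$; contact arcs with head $v$ are attached to $v_1$ as well. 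One must also reassign the single parent arc and do the symmetric thing at the root if $N$ has a branching root inside this component — but since a multiply rooted network's roots have outdegree $\ge 2$, the root of each $T$ is handled by the same move with $p$ absent.

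The bookkeeping to verify is: (a) after each move $F'$ is still a forest with leaf set $X$ — the new component structure is a tree with one more "linearization", acyclicity is preserved because we only ever replace a vertex by a directed path respecting the old ancestor order; (b) every arc of $A - A'$ still has end vertices in different components of $F'$ — an arc that was a contact arc stays a contact arc, and crucially no tree-arc of $N$ is turned into a contact arc; (c) every non-leaf vertex still has an outgoing $F'$-arc, so $L(F') = X$ throughout (invoking the Lemma); and (d) the process terminates, which follows from a monovariant such as $\sum_v \max(0,\mathrm{outdeg}_{F'}(v)-1)$ strictly decreasing. When no branching tree vertex remains, every component of $F'$ is a directed path from a root to a single leaf, so $F'$ is a disjoint union of $|X|$ paths embedding the trivial forest; suppressing degree-two vertices recovers the trivial base forest. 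Finally, "no arc of $N$ joins two non-consecutive vertices of the same path": any arc of $N$ inside a path $P_x$ that is not one of its path-arcs would be an arc of $A - A'$ with both end vertices in the same component of $F'$, contradicting the definition of subdivision forest; and two consecutive vertices are joined by a path-arc, which lies in $A'$, not in $A - A'$.

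**Main obstacle.** The hard part will be checking invariant (b) — that the unglueing move never converts a tree-arc of $N$ into an arc whose two end vertices lie in the same component of $F'$, and that it never creates a directed cycle in $N$ — because a contact arc of $N$ might run between two vertices that, before the move, lived in different components but afterwards could conceivably end up in the same component if the move is defined carelessly. The resolution is that the move only ever subdivides/relabels vertices within one component of $F'$ and only redistributes arcs already incident to the split vertex $v$; no arc changes which components its endpoints belong to, so "same component" / "different component" status is preserved, and acyclicity is preserved because the new path $v_1 \to \cdots \to v_k$ sits in place of $v$ consistently with the global topological order of $N$. Making this precise — and in particular specifying exactly which copy $v_i$ each contact arc incident to $v$ gets reattached to so that no cycle is introduced — is the only delicate point; everything else is routine.
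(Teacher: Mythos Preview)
Your arguments for (iii) $\Rightarrow$ (ii) and (ii) $\Rightarrow$ (i) are fine. The argument for (i) $\Rightarrow$ (iii), however, has a structural error that cannot be repaired as stated.

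By definition a subdivision forest for $N=(V,A)$ is a graph $F'=(V,A')$ with $A'\subseteq A$; in particular $F'$ has exactly the same vertex set as $N$. Your ``unglueing'' move does not respect this: you delete the branching vertex $v$ and replace it by new vertices $v_1,\dots,v_k$ together with new arcs $(v_1,v_2),\dots,(v_{k-1},v_k)$ that were never arcs of $N$. The object you are building is therefore not a subgraph of $N$ at all, so even if the procedure terminates in a union of paths, those paths do not lie in $N$ and do not witness (iii). All the worries you raise about ``not creating a directed cycle in $N$'' and reattaching contact arcs to the right copy $v_i$ are symptoms of this: you are editing $N$, not selecting arcs inside it.

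The fix is much simpler than your construction. Given any subdivision forest $F'=(V,A')$, pick a vertex $v$ of $F'$ with $\mathrm{outdeg}_{F'}(v)\ge 2$ (say the highest such vertex in its component) and delete from $A'$ all but one of the outgoing arcs $(v,c_1),\dots,(v,c_k)$, keeping say $(v,c_1)$. The deleted arcs move into the complement $A-A'$. No new vertices or arcs are introduced. Check the invariants: $v$ still has an outgoing $F'$-arc, so $L(F')$ is unchanged; the new $F''$ has strictly finer components than $F'$, so every old contact arc still joins distinct components; and each discarded arc $(v,c_i)$ now runs from the component containing $v$ to the new component rooted at $c_i$. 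The number of vertices of outdegree $\ge 2$ in $F'$ strictly decreases, so iteration terminates in a union of directed paths with leaf set $X$, which is exactly (iii). This is the paper's argument.
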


\begin{proof}
Clearly, (ii) implies (i), and (ii) and (iii) are equivalent.

To show that (i) implies (iii), suppose (i) holds and that $N$ is forest-based with
base forest $F$. Let $F'$ be an embedding of $F$ such that $F'$ is not a union of paths. Then there exists
a component $C'$ of $F'$ that is not a path. Hence, $C'$ contains a vertex $v$ that has  outdegree greater than one, and no ancestor of $v$ in $C'$ has outdegree greater than one. 
By removing from $C'$ all but one arc with tail $v$, we obtain an embedding $F''$
of a new base forest, such that the number of vertices of outdegree two or more in $F''$ is
strictly lower than the number of vertices of outdegree two or more in $F'$. We can then repeat
this process until we obtain an embedding 
of a forest $F_0$ such that all vertices in $F_0$ have outdegree
at most one. So $F_0$ is a union of paths 
such that there is no arc in $N$ joining two non-consecutive vertices of the same path, and therefore 
an embedding of the trivial forest in $N$. So (iii) holds.
\end{proof}

\begin{corollary}
Suppose that $N$ is an $m$-network on $X$.  If $N$ is forest-based, then $|X| \geq m$.
Moreover, if $|X|>m$, then $N$ must contain a base forest that is not 
proper, and if $|X|=m$, then $N$ must be proper.
\end{corollary}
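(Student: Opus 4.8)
The plan is to prove the corollary by working directly from Theorem~\ref{paths}. The key observation is that Theorem~\ref{paths} tells us that a forest-based $m$-network $N$ is based on the trivial forest on $X$, which has exactly $|X|$ components. So the real content of the corollary is comparing the number of roots $m$ of $N$ with the number of components of a base forest, and understanding what happens when we suppress outdegree-one roots during the passage from a subdivision forest to a base forest.

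**Proving $|X| \geq m$.**

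First I would establish the inequality $|X| \geq m$. Since $N$ is forest-based, by Theorem~\ref{paths} it is based on the trivial forest, so there is an embedding $F'$ of the trivial forest into $N$ with $F' = (V, A')$ a subdivision forest. Each root $\rho \in R(N)$ has indegree $0$ in $N$, hence indegree $0$ in $F'$, so $\rho$ is a root of the forest $F'$; thus $|R(F')| \geq m$. Now each component of $F'$ is a path (possibly of length $0$), and each such component contains exactly one root of $F'$ and exactly one leaf (its bottom endpoint, an element of $X$ — in the length-$0$ case the single vertex is both). So the number of components of $F'$ equals $|L(F')| = |X|$ and also equals $|R(F')|$. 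Hence $|X| = |R(F')| \geq |R(N)| = m$.

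**The two refinements.**

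For the "moreover" statements I would argue as follows. Suppose $|X| > m$. A proper base forest would be one with exactly $m$ components, but the trivial forest on $X$ is itself a base forest with $|X| > m$ components, which is not proper; so $N$ contains a base forest that is not proper, and we are done — this direction is essentially immediate from part (ii) of Theorem~\ref{paths}. Now suppose $|X| = m$. Then from the chain of equalities above, $|R(F')| = |X| = m = |R(N)|$, so every root of $F'$ is already a root of $N$ and in particular has outdegree at least $2$ in $N$; but in a subdivision forest each vertex has outdegree at most... here I must be slightly careful, since a subdivision forest can have roots of positive outdegree. The cleaner route: in passing from $F'$ to a base forest $F$ we only suppress subdivision vertices and outdegree-one roots, neither of which changes the root count when there are none to suppress — and since $R(F') = R(N)$ consists of vertices that are roots of the ambient network $N$, none of them can acquire outdegree one in $F'$ without $N$ having a root of outdegree one, contradicting the definition of a network. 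Hence no outdegree-one roots get suppressed, so $F$ has the same number of roots as $F'$, namely $m$, i.e. $F$ is proper, and $N$ is proper forest-based.

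**Main obstacle.**

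The only delicate point I anticipate is the last one: ruling out that the suppression process used to build a base forest from $F'$ could reduce the number of components when $|X| = m$. The resolution is that components are only lost by suppressing an outdegree-one root, and the roots of $F'$ that come from $R(N)$ have outdegree $\geq 2$ by the definition of a network, while the equality $|X| = m$ forces $F'$ to have no other roots. I would make sure to state explicitly that $R(F') \supseteq R(N)$ always, and that equality of cardinalities then forces $R(F') = R(N)$, so that all roots of $F'$ are "protected" from suppression.
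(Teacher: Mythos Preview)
Your argument for $|X| \geq m$ and for the case $|X| > m$ is correct and is essentially the same as the paper's use of Theorem~\ref{paths}.

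However, your treatment of the case $|X| = m$ contains a genuine error. You assert that, since $R(F') = R(N)$, none of the roots of $F'$ can have outdegree one in $F'$, because that would force a root of $N$ to have outdegree one. This is false: the outdegree of a vertex in the subdivision forest $F'$ can be strictly smaller than its outdegree in $N$, since $F'$ is obtained from $N$ by deleting the contact arcs. Indeed, for the very embedding you are using (the trivial-forest embedding from Theorem~\ref{paths}), each component of $F'$ is a path, so every root of $F'$ has outdegree exactly one whenever its path has positive length. Hence outdegree-one roots certainly \emph{do} get suppressed in passing from $F'$ to the base forest.

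The repair is much simpler than the route you attempted. Suppressing an outdegree-one root never changes the number of components of a forest: the child of the removed root becomes the new root of the same component. So the base forest obtained from $F'$ still has $|X| = m$ components and is therefore proper. Even more directly, the base forest you obtain is by construction the trivial forest on $X$, which visibly has $m$ components. This is exactly how the paper disposes of both ``moreover'' clauses in one stroke via Theorem~\ref{paths}. (The paper's derivation of $|X|\geq m$ is phrased slightly differently, via the inequality $m \leq |F| \leq |X|$ valid for \emph{every} base forest $F$, but that is a minor variation.)
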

\begin{proof}
Suppose that $N$ is forest-based. Then 
any base forest for $N$ contains at least $m$ phylogenetic trees (since each root of $N$ must belong to a 
different tree), and each of these trees has at least one leaf. So if $F$ is a base forest for $N$, then
$|X| \geq |F| \geq m$. The last statement now follows immediately by Theorem~\ref{paths}.
\end{proof}

\section{Relationship of forest-based networks with other classes of networks}
\label{sec:relationship}

We now present some results and examples to
elucidate the relationship between forest-based networks
and some well-known classes of phylogenetic networks. 
We shall focus on binary networks, as
binary phylogenetic networks are most commonly studied in the literature.

We begin by noting that there are networks that are 
neither tree-based nor forest-based (see e.g. \cite[Figure 10.10(c)]{S16}).
Thus it is of interest to better understand the relationship
between binary forest-based networks, tree-based networks and
other classes of networks. More specifically, we shall consider 
so-called tree-child,  tree-sibling and reticulate-visible networks (see below for definitions)
since, in case these have a single root, they are well-understood classes that have interesting interrelationships with 
tree-based networks \cite[Figure 10.12]{S16}. 

We begin by showing that binary forest-based {\em phylogenetic} networks
are always tree-based.

\begin{proposition} \label{prop:forest-tree}
	Suppose that $N$ is a binary phylogenetic network on $X$, $|X|\ge 2$. If $N$ is 
	forest-based, then it is tree-based. 
\end{proposition}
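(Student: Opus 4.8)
The plan is to use Theorem~\ref{paths} to convert forest-basedness of $N$ into an explicit decomposition of its vertex set into directed paths, and then to glue these paths together along arcs of $N$ so as to produce a subdivision tree for $N$; the existence of such a tree is exactly what is needed to conclude that $N$ is tree-based.

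First I would apply Theorem~\ref{paths}: since $N$ is forest-based, the trivial forest on $X$ is embedded in $N$ as a union of pairwise vertex-disjoint directed paths, one path $P_x$ having terminal vertex $x$ for each $x\in X$ (some possibly of length $0$), with no arc of $N$ joining two non-consecutive vertices of the same path. As $N$ is a phylogenetic network it has a unique root $\rho$, and since $\rho$ has indegree $0$ it must be the initial vertex of exactly one of these paths, say $P_{x_0}$; moreover $\rho$ is not a leaf of $N$ (it has outdegree at least $2$), so $P_{x_0}$ has length at least $1$.

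Next I would build the tree. For each $x\in X\setminus\{x_0\}$ the initial vertex $s_x$ of $P_x$ is distinct from $\rho$, so by connectedness of $N$ it has at least one incoming arc in $N$; fix one such arc $a_x$. Let $T=(V(N),A')$ where $A'$ is the union of all arcs lying on the paths $P_x$, $x\in X$, together with the arcs $a_x$, $x\in X\setminus\{x_0\}$. Then in $T$ every vertex has indegree exactly $1$ except $\rho$, which has indegree $0$: an interior or terminal vertex of a path keeps its unique path in-arc and acquires no $a_y$ (the heads of the arcs $a_y$ are path-initial vertices), each $s_x$ with $x\ne x_0$ acquires precisely $a_x$, and $\rho$ acquires nothing since it has no incoming arc in $N$. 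Since $T$ is a subgraph of the acyclic graph $N$, tracing incoming arcs backwards from any vertex terminates, and by the indegree count it can only terminate at $\rho$; hence every vertex is reachable from $\rho$ in $T$, so $T$ is connected. Counting arcs, the $|X|$ vertex-disjoint paths contribute $|V(N)|-|X|$ arcs and we added $|X|-1$ further, pairwise distinct arcs (none of the $a_x$ is a path arc, as $s_x$ has no incoming path arc), so $T$ has $|V(N)|-1$ arcs; a connected graph with $|V(N)|-1$ arcs has a tree as its underlying undirected graph, and together with the indegree condition this makes $T$ a tree rooted at $\rho$.

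It then remains to check that $L(T)=X$, which makes $T$ a subdivision tree for $N$ and hence shows $N$ is tree-based. A vertex has outdegree $0$ in $T$ precisely when it is the terminal vertex of its path: adding the arcs $a_x$ creates outgoing arcs only at their tails, and these tails are non-leaves of $N$ (leaves of $N$ have outdegree $0$), so they lie in the interior or at the start of their path and thus retain a path out-arc. The terminal vertices of the paths $P_x$ are exactly the elements of $X$, and $\rho$ is not one of them because $P_{x_0}$ has length at least $1$. Since every vertex of $T$ other than $\rho$ has indegree $1$, it follows that $L(T)=X=L(N)$, as required. I expect this last step to be the only real subtlety: one must rule out that some non-leaf vertex of $N$ degenerates into a leaf of $T$, which is precisely why it matters that every non-leaf vertex of $N$ keeps a path out-arc and that $\rho$ is itself not a leaf. (Incidentally, the argument uses only that $N$ has a single root, not that it is binary.)
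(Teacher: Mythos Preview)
Your proof is correct and follows essentially the same idea as the paper: take a subdivision forest for $N$ and, for each component whose root is not $\rho(N)$, add one incoming arc of that root in $N$; the result is a subdivision tree with leaf set $X$. The only difference is that you first invoke Theorem~\ref{paths} to reduce to the trivial forest (a union of paths), whereas the paper applies the gluing step directly to an arbitrary subdivision forest $F'$; the detour through Theorem~\ref{paths} is thus not needed, and your parenthetical observation that binarity is never used is also correct.
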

\begin{proof}
	Assume that $N$ is forest-based with base forest $F$, and consider the embedding $F'$ of $F$
	into $N$. For all trees $T'$ of $F'$ whose root $\rho_{T'}$ is distinct from the root of $N$,
	we can add to $F'$ the incoming arc of $\rho_{T'}$ in $N$ (choosing one such arc if $\rho_{T'}$ is a hybrid vertex of $N$). Clearly, the embedding $F_0$ obtained this way is the embedding of a base tree for $N$. In particular, this means that $N$
	is forest-based.
\end{proof}

We now consider tree-child networks. Generalising the definition for tree-child 
phylogenetic networks  \cite{cardona2008comparison}, for a network $N$ on $X$, we
define $N$ to be  \emph{tree-child} if all internal vertices of $N$ have a child that is a tree vertex.
%A {\em tree-path} in $N$ is a directed path in $N$ such that every vertex in the path is a tree-vertex in $N$. \gs{GS: We never use tree-paths anywhere, so we don't need this definition.}

Note that any binary tree-child phylogenetic network is tree-based (see e.g. \cite[Corollary 10.18]{S16}). 
We now show that a similar result holds for binary forest-based networks.

\begin{theorem}\label{tc}
	If $N$ is a binary tree-child network on $X$, $|X|\ge 2$, then it is forest-based.
\end{theorem}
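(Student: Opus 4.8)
The plan is to construct, straight from the tree-child property, an explicit embedding of the \emph{trivial} forest into $N$ and then appeal to Theorem~\ref{paths}. For each internal vertex $v$ of $N$, the tree-child condition provides a child of $v$ that is a tree vertex; fix one such choice and call it $c(v)$. Note that for a hybrid vertex $h$ this choice is forced, since $h$ has out-degree one, and its unique child is necessarily a tree vertex (otherwise $h$ would witness a failure of tree-childness at $h$ itself). Put $A' = \{(v,c(v)) : v \text{ internal}\}$ and $F' = (V,A')$. I would first check that $F'$ is a subdivision forest whose base forest is trivial: each vertex has out-degree at most one in $F'$, and out-degree zero precisely at the leaves of $N$; and each vertex has in-degree at most one in $F'$, because the head $c(v)$ of any arc of $A'$ is a tree vertex and hence has a single parent in $N$. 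Thus $F'$ is a vertex-disjoint union of directed paths each terminating at a leaf of $N$, there are $|X|\ge 2$ of them, and suppressing the degree-two vertices on each path collapses it to its terminal leaf.

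The main step is then to show that every arc of $A - A'$ has its two end vertices in \emph{different} components of $F'$ --- by Theorem~\ref{paths} this is exactly what makes $N$ forest-based. Suppose, for a contradiction, that $(u,w)\in A - A'$ has both ends on one path $P$ of $F'$. Acyclicity of $N$ forces $P$ to run from $u$ down to $w$, and since $(u,w)\notin A'$ this stretch of $P$ has length at least two, so $w\neq c(u)$ while $w$ lies strictly below $c(u)$ on $P$. Since $u$ has the two distinct children $c(u)$ and $w$, the vertex $u$ is a tree vertex or a root (it cannot be a hybrid), so tree-childness says that one of $c(u), w$ is a tree vertex. If $w$ is a tree vertex (or leaf), then $u$ is its only parent in $N$, and the $F'$-path descending from $c(u)$ to $w$ would have to pass through $u$ a second time, contradicting acyclicity of $F'$; if instead $w$ is a hybrid, then --- exactly as in the first paragraph --- $w$ is never the head of an arc of $A'$ and so cannot lie below anything in $F'$. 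Both alternatives are impossible, which completes the argument.

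I expect this second step to be the main obstacle, and the point that makes it go through is the choice to route every path through a \emph{tree-vertex} child: this makes each hybrid vertex a source of its component in $F'$, which is precisely what forbids the short-circuiting arcs ruled out by Theorem~\ref{paths}. Along the way I would record the two elementary structural facts used above --- that in a binary tree-child network the unique child of a hybrid vertex is a tree vertex, and that a vertex with two or more children is never a hybrid --- both of which are immediate from the definitions.
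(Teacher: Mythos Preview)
Your proof is correct and takes a genuinely different route from the paper's. The paper simply deletes every arc whose head is a hybrid vertex; the resulting graph $F'=(V,A\setminus I)$ is automatically a forest (each hybrid becomes a root of $F'$), tree-childness ensures that no internal vertex of $N$ becomes a leaf of $F'$, and since every removed arc ends at a root of $F'$ the crossing condition is immediate. You instead keep exactly one outgoing arc per internal vertex---to a chosen tree-vertex child---obtaining a decomposition of $V(N)$ into $|X|$ vertex-disjoint directed paths, and then argue by cases that no discarded arc has both endpoints on a single path. The paper's argument is shorter because the contact-arc condition comes for free, whereas yours has the virtue of directly producing an embedding of the \emph{trivial} forest (i.e.\ witnessing condition~(iii) of Theorem~\ref{paths}) rather than a subdivision forest that may still branch. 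Two minor remarks: the appeal to Theorem~\ref{paths} is not strictly needed, since once $F'$ is a forest with $L(F')=L(N)$ and every arc of $A\setminus A'$ crosses components you already have the definition of forest-based in hand; and the sentence invoking tree-childness to conclude that one of $c(u),w$ is a tree vertex is redundant, as $c(u)$ is a tree vertex by construction---the subsequent case split on $w$ is what actually does the work.
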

\begin{proof}
	Let $I \subseteq A$ be the set of arcs $(u,v)$ in $N$ such that $v$ is a hybrid vertex of
	$N$. 
	If $I=\emptyset$ then $N$ is a phylogenetic tree. Since a phylogenetic 
	tree on $X$ is based on the trivial phylogenetic forest, the theorem follows. 
	So assume that $I\not=\emptyset$. Clearly, the graph $F'=(V,A-I)$ is a forest. We remark first that a leaf of $F'$ is
	either a leaf of $N$, or a vertex of $N$ whose children are all hybrid vertices. The
	network $N$ being tree-child, it contains no vertices of the latter type, so
	$L(F')=L(N)$. Moreover, all arcs $(u,v)$ of $I$ are such that $v$ is a root of  $F'$, so
	in particular $u$ and $v$ belong to distinct trees of $F'$. This means that $F'$ is a
	subdivision forest for $N$, so $N$ is forest-based.
\end{proof}

In particular, it follows from Theorem~\ref{tc} that all binary
tree-child {\em phylogenetic} networks (including phylogenetic trees!) are forest-based. 

We now consider two further classes of networks. 
A network is  (1) {\em tree-sibling} if for every $v \in H(N)$
there is a $v' \in V(N)$ so that $v'$ is a tree-vertex and $v'$ shares a parent with $v$, 
and (2) \emph{reticulation-visible} if for every $v \in H(N)$, 
there is a leaf $x \in X$ such that all directed paths from a root of $N$ to $x$ contain $v$. 
% there is a root $r$ and a leaf $x$ such that all directed paths from $r$ to $x$ contain $v$.
These definitions generalize the 
ones that were originally given for phylogenetic networks 
(see \cite{nakhleh2004phylogenetic} and \cite{huson2007beyond}, respectively).
Note that it follows immediately from the definitions that 
tree-child networks are  tree-sibling and reticulation-visible.

In Figures~\ref{classes} and \ref{examples-1} we present 
a diagram and some examples which illustrate 
the interrelationship between forest-based 
phylogenetic networks and the other classes of phylogenetic networks
that we have considered (see also \cite[Figure 10.12]{S16}).
Note that the network $G$ in Figure~\ref{examples-1} provides an example which shows
that we do not necessarily obtain a forest-based network by removing the root from a tree-based phylogenetic network.

%%%%%%%%%%%%%%%%%%%%%%%%
\begin{figure}[h]
	\begin{center}
		\includegraphics[scale=0.55]{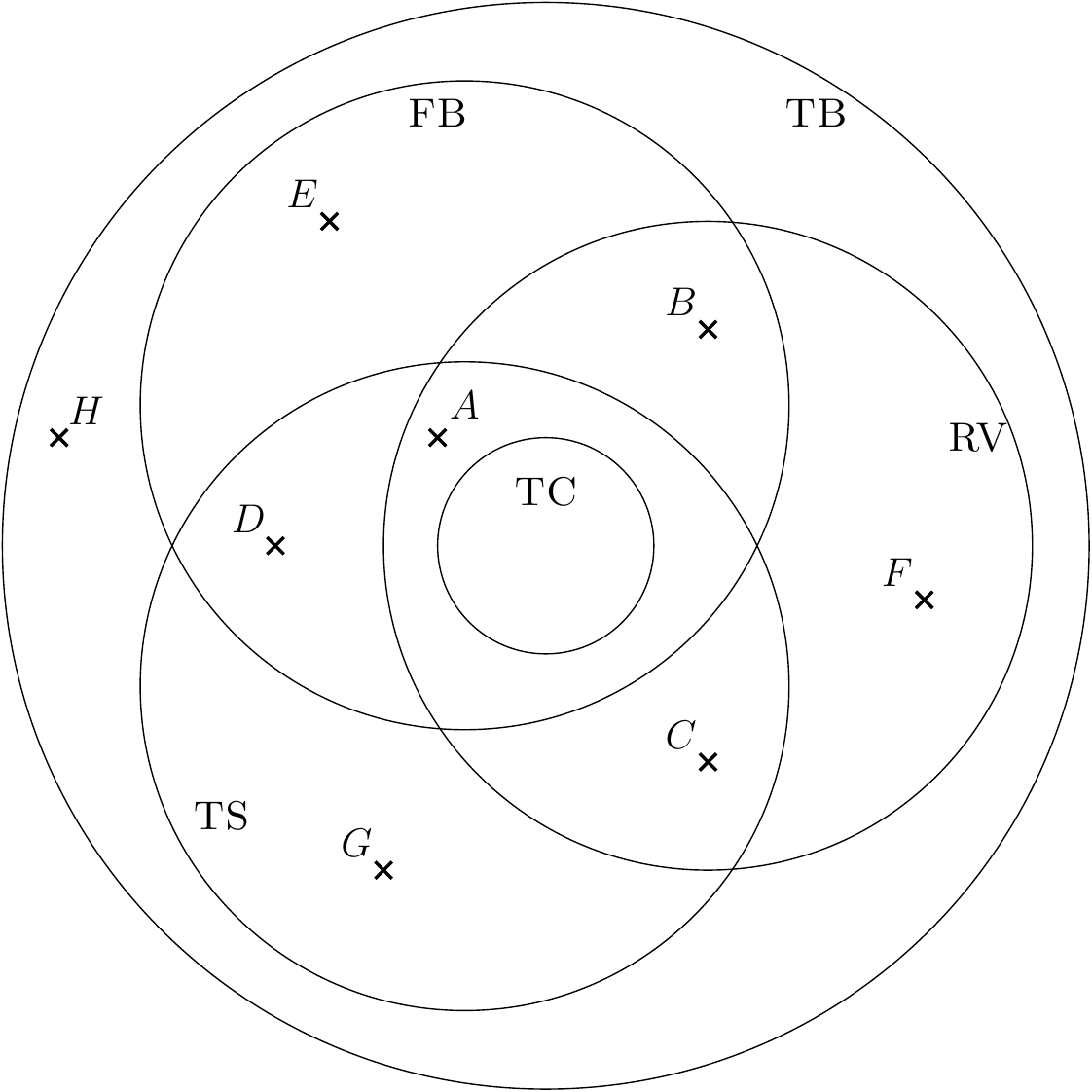}
		\caption{A Venn-diagram for different classes of phylogenetic networks; 
			$TC=\mbox{tree-child}$, $FB=\mbox{forest-based}$,  $TB=\mbox{tree-based}$,  $TS=\mbox{tree-sibling}$, 
			and $RV=\mbox{reticulate-visible}$. See Figure~\ref{examples-1}
			for the indicated networks  $A$--$H$.\label{classes}}
	\end{center}
\end{figure}
%%%%%%%%%%%%%%%%%%%%%%%%
 
%%%%%%%%%%%%%%%%%%%%%%%%
\begin{figure}[h]
	\begin{center}
		\includegraphics[scale=0.55]{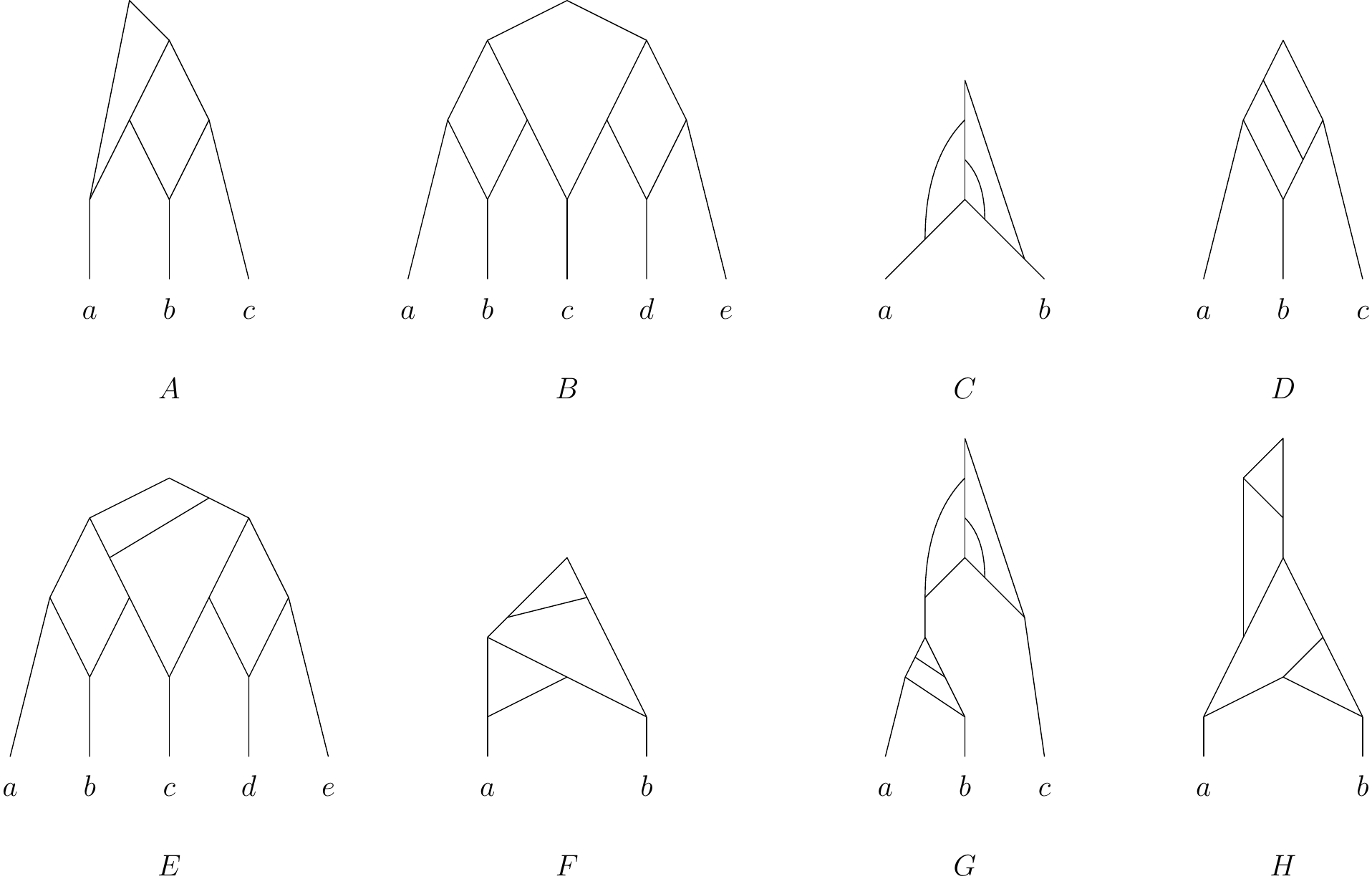}
		\caption{Eight examples of phylogenetic networks.\label{examples-1}}
	\end{center}
\end{figure}
%%%%%%%%%%%%%%%%%%%%%%%%

\section{Arboreal networks}
\label{sec:arboreal}

An {\em arboreal} network is a network whose underlying (undirected) graph is a tree.
These networks are of interest as, even though
an arboreal network with more than one root has an
underlying tree structure, it must still contain some reticulation vertices
(since, as can be easily seen, if $N$ is arboreal, then $|H(N)|=|R(N)|-1$).

In this and the next section we shall consider properties of
forest-based arboreal networks.  Note 
that arboreal networks are not necessarily forest-based (see e.\,g.\,Figure~\ref{arb-nfb}).
In this section, we shall prove the following  
characterization for when an arboreal network is forest-based.

%%%%%%%%%%%%%%%%%%%%%%%%%%%%%%%%
\begin{figure}[h]
	\begin{center}
		\includegraphics[scale=0.6]{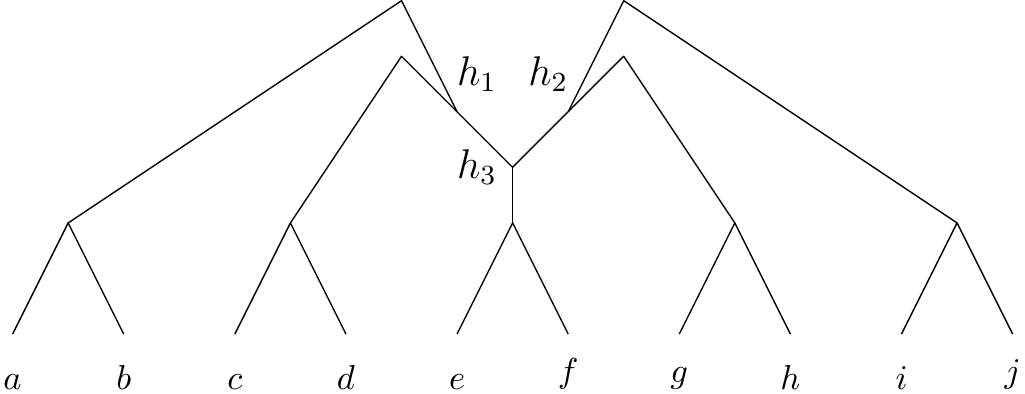}
		\caption{An arboreal network $N$ that is not forest-based. 
			To see this, note that one of $(h_1,h_3)$ or $(h_2,h_3)$ must be a 
			contact arc, in which case either $h_1$ or $h_2$ becomes 
			a leaf in the corresponding forest.} \label{arb-nfb}
	\end{center}
\end{figure}
%%%%%%%%%%%%%%%%%%%%%%%%%%%%%%%%

\begin{theorem}\label{arboreal}
Suppose that $N$ is an arboreal network with two or more leaves. 
Then $N$ is forest-based if and only if for all hybrid vertices $h$ of $N$, 
there is a sequence of distinct vertices 
$v_1=h, \ldots, v_k$, $k\ge 1$,
such that any two consecutive vertices in the sequence share a child that is a hybrid vertex, and 
$v_k$ has a child that is not a hybrid vertex.
Moreover, in case this holds, then $N$ is proper forest-based if and only if $|R(N)| \ge 2$.
\end{theorem}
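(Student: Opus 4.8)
The plan is to prove the characterization by analysing, for a fixed hybrid vertex $h$, which of the two incoming arcs of $h$ (recall $N$ is semi-binary, so $indeg(h)=2$) can be safely designated a contact arc without destroying the leaf set. The key local observation is that in an arboreal network the underlying graph is a tree, so deleting one incoming arc of a hybrid vertex $h$ splits the tree; the problem of choosing a subdivision forest $F'$ is essentially the problem of choosing, for each hybrid vertex, exactly one incoming arc to delete, subject to the constraint that after all deletions no internal vertex of $N$ has become a leaf of $F'$ (by Lemma~1, this, together with the "different trees" condition, is exactly what is needed). Since $N$ is arboreal, deleting an arc automatically puts its endpoints in different components, so the only real constraint is the "no new leaves" condition.

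First I would set up the correspondence between forest-based embeddings and choice functions: for each $h\in H(N)$, removing one incoming arc $(u_h,h)$ yields a candidate set $I$, and $F'=(V,A-I)$ is a forest whose components are trees (using arboreality). By Lemma~1, $N$ is forest-based iff we can make these choices so that every non-leaf vertex $v$ of $N$ retains an outgoing arc in $A-I$. A vertex $v$ loses all its outgoing arcs precisely when every child of $v$ is a hybrid vertex whose retained incoming arc is the one coming from some other parent — i.e. when, for every child $c$ of $v$, the arc $(v,c)\in I$. So the obstruction to being forest-based is a "bad configuration" of hybrid vertices forcing some internal vertex to be stripped of all its children. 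Next I would show that the existence, for every hybrid $h$, of a sequence $v_1=h,\dots,v_k$ as in the statement is exactly the condition that lets us resolve these constraints consistently: intuitively, the sequence is a chain of vertices each "passing the buck" of its hybrid child down the line until we reach $v_k$, which has a non-hybrid child and hence can afford to give up all its hybrid children to contact arcs. I would argue the forward direction (forest-based $\Rightarrow$ sequence exists) by taking a subdivision forest $F'$ and, starting from an arbitrary hybrid $h$, following the arcs in $I$: $h$ has a retained incoming arc from some $v_2$; that $v_2$, if all its children are hybrids, must itself be the head of a retained incoming arc from some $v_3$, and so on; acyclicity of $N$ and finiteness of $V$ force this chain to terminate at a vertex $v_k$ with a non-hybrid child (the chain cannot loop, and it cannot run off the graph). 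For the converse, given such sequences for all $h$, I would build $I$ greedily: process hybrid vertices and, guided by the sequences, delete the "other" incoming arc so that along each chain the buck is passed correctly; one must check this is consistent, i.e. a single global choice works for all chains simultaneously, which is where a little care is needed.

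For the "moreover" clause, I would argue as follows. If $N$ is proper forest-based, then by definition it has a base forest with $m=|R(N)|$ components, hence $m\ge 2$ (a forest has at least two components by our conventions). Conversely, suppose $N$ is forest-based and $|R(N)|=m\ge 2$. Since $N$ is arboreal with $|H(N)|=|R(N)|-1=m-1$, and any subdivision forest $F'$ is obtained by deleting one incoming arc from each of the $m-1$ hybrid vertices, $F'$ has exactly $1+(m-1)=m$ components (each arc deletion in a tree increases the component count by one, and the original underlying graph of $N$ is connected). Suppressing subdivision vertices and outdegree-one roots does not change the number of components, so the base forest has exactly $m$ components, i.e. $N$ is proper forest-based. (If $|R(N)|=1$ then $N$ has no hybrid vertices, is itself a phylogenetic tree, and cannot have a base forest with $\ge 2$ components containing its single root, so it is not proper forest-based — consistent with the stated equivalence.)

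The main obstacle I anticipate is the converse direction of the main equivalence: turning the per-hybrid sequences into a single globally consistent choice of contact arcs. Different chains may share vertices or interact, and one must verify that the deletions prescribed by one chain do not conflict with those prescribed by another, and that no internal vertex ends up with all outgoing arcs deleted. I expect this to require a careful argument — perhaps ordering the hybrid vertices by a topological order of $N$ and arguing that processing them in that order, always deleting the incoming arc \emph{not} dictated by the sequence, never creates a conflict because the sequences are themselves "downward" compatible with acyclicity. The rest of the proof (the forward direction, and the component-counting for the "moreover" part) should be routine given arboreality and Lemma~1.
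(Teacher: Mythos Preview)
Your forward-direction sketch does not construct a sequence satisfying the theorem's condition. The sequence $v_1=h,v_2,\dots,v_k$ must have \emph{consecutive vertices sharing a hybrid child}; thus $v_{i+1}$ is a \emph{co-parent} of some hybrid child of $v_i$, not a parent of $v_i$. You define $v_2$ as the tail of the retained incoming arc of $h$, so $v_2$ is a parent of $v_1=h$; since $N$ is arboreal these two vertices cannot share any child (a common child would create an undirected cycle on $v_1,v_2$ and that child). The paper instead walks sideways through co-parents: if the unique child $w_i$ of $v_i$ is a hybrid and $(v_i,w_i)$ lies in $F'$, let $v_{i+1}$ be the other parent of $w_i$ (so $(v_{i+1},w_i)\in I$). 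Arboreality, not mere acyclicity, then guarantees the walk $v_1,w_1,v_2,w_2,\dots$ is a simple path in the underlying tree and hence cannot revisit a vertex; finiteness forces termination at some $v_k$ with a non-hybrid child.

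For the converse you correctly flag the real obstacle --- making the per-hybrid choices globally consistent --- but your topological-order suggestion is not the paper's device, and it is not clear it resolves the interaction between chains. The paper introduces an auxiliary undirected graph $\mu(N)$ whose vertices are those vertices of $N$ with at least one hybrid child and whose edges record the relation ``share a hybrid child''; arboreality of $N$ forces $\mu(N)$ to be a forest. The sequences in the hypothesis are then used to orient each component of $\mu(N)$ so that every sink has a non-hybrid child in $N$; for each hybrid $h$ one puts into $I$ the arc $(v,h)$ where $v$ is the head of the edge of $\mu^+(N)$ corresponding to $h$. The sink condition is precisely what prevents any non-leaf vertex from losing all outgoing arcs. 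Your component-counting argument for the ``moreover'' clause is essentially correct, but it applies to the specific $F'$ built in the converse (where exactly one incoming arc per hybrid is removed), not to an arbitrary subdivision forest.
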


\begin{proof}
First note that if $N$ has a single root, then $|H(N)|=|R(N)|-1=0$, and so 
$N$ is a phylogenetic tree. Thus, it is forest-based by Theorem~\ref{tc}. 
Moreover, in  this case $N$ is clearly not proper forest-based.
So, we shall assume from now on that $|R(N)| \ge 2$.
 
Suppose first that $N$ is forest-based, with 
subdivision forest $F'$. Let $h$ be a hybrid vertex of $N$. 
We shall associate a sequence of vertices $\sigma(h)$ to $h$ and show that
this sequence satisfies the properties stated in the theorem.
If the child of $h$ is not a hybrid vertex, then 
$\sigma(h)$ is the sequence that contains $h$ as its sole vertex. 
Clearly, $\sigma(h)$ satisfies the properties stated in the theorem.
So assume that the child of $h$ is a hybrid vertex. Let
$i\geq 1$ be such that for all $1\leq j\leq i$ we have that
the child of $v_j$ is a hybrid vertex. Then we
define $v_{i+1}$ from $v_i$ as follows. First we pick a child $w_i$ of $v_i$ such that $(v_i,w_i)$ 
is an arc of $F'$. Note that such a child must exist since $F'$ is a subdivision forest 
for $N$. Then we choose $v_{i+1}$ to be a parent of $w_i$ that is not $v_i$. 
Note that the choice of $w_i$ implies that the arc $(v_{i+1},w_i)$ is not an arc of $F'$. 
In particular, we cannot have $v_{i+1}=v_{i-1}$ by the choice of $i$. Since $N$ is 
arboreal, it follows that a given vertex of $N$ cannot appear twice in $\sigma(h)$. 
As the number of vertices in $N$ is finite, it follows that $\sigma(h)$ must end 
in a vertex $v_k$ that has a tree vertex as a child.

Conversely, suppose that for all hybrid vertices $h\in H(N)$, there exists a 
sequence $\sigma(h)$ of vertices that satisfies the stated properties. We next 
construct a set $I$ of arcs of $N=(V,A)$ such that each arc of $I$ 
has a hybrid vertex of $N$ as head, and the graph $F'=(V,A-I)$ satisfies $L(F')=L(N)$. 
To do this, we start by constructing a graph $\mu(N)$ as follows: The 
vertices of $\mu(N)$ are all vertices of $N$ with at least one child that is a hybrid 
vertex, and two vertices of $\mu(N)$ are joined by an edge if they share a child. We 
first remark that since $N$ is arboreal, $\mu(N)$ does not contain cycles. We also 
remark that there is a trivial bijection $\chi$ between 
the edge set of $\mu(N)$ and $H(N)$. 

We next orient the edges of $\mu(N)$ to obtain a directed graph $\mu^+(N)$ 
that has the same vertex set as $\mu(N)$. For this it suffices to consider a
connected component $G$ of $\mu(N)$.
To this end, note that $G$ is an unrooted tree and that 
a vertex of $G$ with overall degree one is either a hybrid vertex of $N$, 
or a vertex of $N$ with at least one child in $N$ that is a tree-vertex. We start by 
successively considering the vertices of $G$ corresponding to hybrid vertices of $N$
under $\chi$. Let $h$ be such a vertex of $N$. Then, by assumption, 
sequence $\sigma(h) = (v_1=h, v_2,\ldots, v_k)$, $k\geq 1$, satisfies the 
properties stated in the theorem.
For $i \in \{1, \ldots, k-1\}$ and all $j\in \{1,\ldots, i-1\}$, assume that the
edges $\{v_j,v_{j+1}\}$ have already been oriented,  and that the
edge $e=\{v_i,v_{i+1}\}$ has not yet been assigned an 
orientation. Then we direct $e$ from $v_i$ to $v_{i+1}$.

Once all vertices on $\sigma(h)$ have been processed, we 
orient all edges of $G$ whose tail is a vertex $v$ in $\sigma(h)$ and 
which have not already been processed away from $v$. Repeating this 
process for all vertices that are heads in the resulting graph and so on,
results in an  oriented graph $\mu^+(N)$. Note that this includes the case 
where $G$ does not contain any vertex corresponding to a hybrid vertex of $N$. 
By construction, our assumptions on $\sigma(h)$ imply that a
vertex of $G$ of outdegree $0$ has a child in $N$ that is a tree-vertex, 
as desired. Furthermore, $\chi$ induces a natural 
bijection $\chi^+$ between the arc set of $\mu^+(N)$ and $H(N)$.

Armed with $\mu^+(N)$, we construct a set $I$ of arcs of $N$ 
as follows. First, we initialize $I$ with the empty set. Next, for each 
hybrid vertex $h$ of $N$, we add the arc $(v,h)\in A$ to $I$, where $v$ 
is the head of the arc in $\mu^+(N)$ corresponding to $h$ under $\chi^+$.

Clearly, the graph $F'=(V,A-I)$ is a forest, since it contains exactly one 
incoming arc for each hybrid vertex of $N$. To see that $L(F')=L(N)$, it 
suffices to remark that each non-leaf vertex $v$ of $N$ has an outgoing 
arc in $F'$. If $v$ has at least one child that is not a hybrid vertex, then 
the set equality holds. Otherwise, $v$ is a vertex of $\mu(N)$ 
whose indegree in $\mu^+(N)$ is at least one. By definition of $I$, for $h$ 
a hybrid vertex corresponding to an outgoing arc of $v$ 
in $\mu^+(N)$ under $\chi^+$, the arc $(v,h)$ is an arc of $F'$.

To conclude that $F'$ is a subdivision forest for $N$, it suffices 
to remark that since $N$ is arboreal, there exists no arc in $I$ whose 
both end vertices are in the same tree of $F'$. So $N$ is forest-based.
Moreover, we have that $|R(N)|=|R(F')|$. Thus, $N$ is proper forest-based.
\end{proof}

\section{Cluster systems from arboreal, forest-based networks}
\label{sec:cluster-arboreal}

In phylogenetics, it is common to work with rooted phylogenetic trees
 in terms of clusters that they induce %(see e.g. \cite[Proposition 2.1, Section 10.3.4]{S16})
	as these can be sometimes easier to handle
	(e.g. for consensus methods  or for computing distances between phylogenetic trees --
	cf. e.g. \cite[Section 2.2.2]{S16}).
We can also associate clusters to networks as follows.
Suppose $N$ is a network on $X$ and $u\in V(N)$. We 
call the set $C(u)=C_N(u)$ of leaves of $N$ below  
$u$ the \emph{cluster} induced by $u$. If $|C_N(u)|=1$ then
we call $C_N(u)$ a {\em trivial cluster (on $X$)}. We refer 
to the set $\mathcal C(N)$ of all clusters induced by 
the vertices in $V(N)$ as the {\em cluster system} induced by $N$ 
and, more generally, we also refer to any collection of 
non-empty subsets or clusters in $X$ by the same name.

%%%%%%%%%%%%%%%%%%%%%%%%%%%%%%%%
\begin{figure}[h]
	\begin{center}
		\includegraphics[scale=0.6]{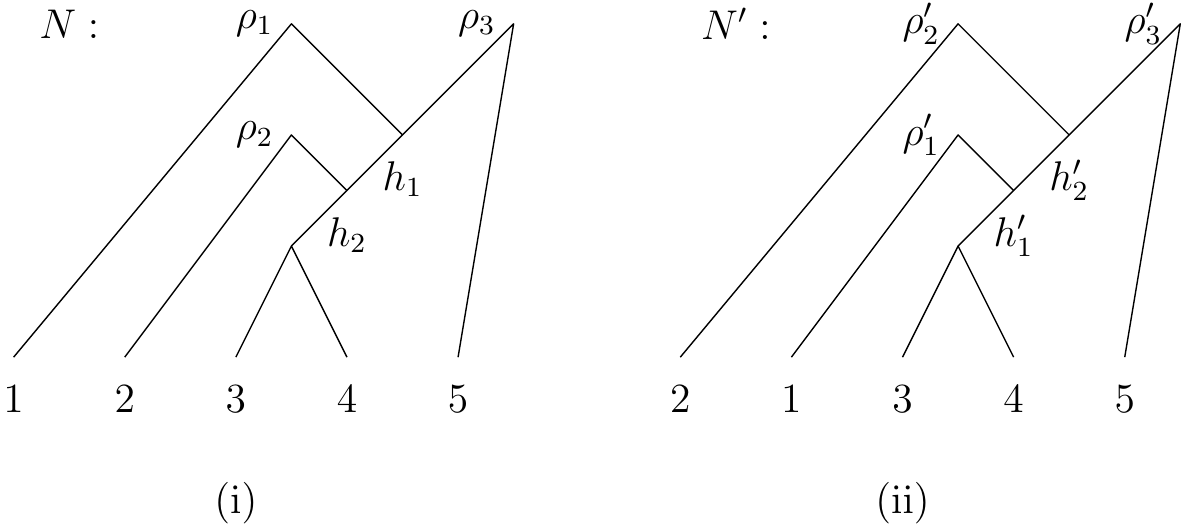}
		\caption{Two arboreal, 3-rooted networks $N$ and $N'$.
			Note that $\mathcal C(N)=\mathcal C(N')$, but that 
			$N$ and $N'$ are not equivalent. The bad 
			arcs are the arcs $(h_1,h_2)$ and $(h_2',h_1')$.  } \label{inequivalent}
	\end{center}
\end{figure}
%% note that network (i) in this figure is not tree-child!
%%%%%%%%%%%%%%%%%%%%%%%%%%%%%%%%

Interestingly, in contrast to phylogenetic trees, there are inequivalent arboreal networks 
that have the same cluster systems  (see e.\,g.\,Figure~\ref{inequivalent}).
Even so, in this section we shall show that if $N$ and $N'$
are distinct arboreal networks with $\mathcal C(N)=\mathcal C(N')$, then $N$ is forest-based if and only if $N'$ is forest-based  (Theorem~\ref{lm-club}).  To do this, we 
will first prove two equivalence results 
for arboreal networks (Theorem~\ref{ts-clu} and Theorem~\ref{tu-clu}) that are analogous to the well-known equivalence theorem
between phylogenetic trees and hierarchies.
This latter result states that, given a cluster system $\mathcal C$, there is a phylogenetic 
tree $T$ on $X$ such that $\mathcal C(T)=\mathcal C$ 
if and only if $\mathcal C$ is a \emph{hierarchy (on $ X$)} (that is, $\mathcal C$ contains all 
trivial clusters and $X$ and, 
for all $C, C' \in \mathcal C$, $C \cap C' \in \{C,C',\emptyset\}$) and that, 
if such a phylogenetic tree $T$ exists, then
up to equivalence, $T$ is uniquely determined by $\mathcal C(T)$ 
(see e.\,g.\,\cite[Proposition 2.1]{S16}). 
 
To state our first result we require further definitions.
We say that an arboreal network $N$ is
{\em uniquely determined} by $\mathcal C(N)$ if 
any arboreal network $N'$ for which $\mathcal C(N)=\mathcal C(N')$ 
holds is equivalent to $N$. 
Furthermore, for $v\in R(N)$, 
we denote by $T(v)$ the subtree of $N$ spanned by all 
leaves below $v$, and we denote by $T_v$ the phylogenetic 
tree obtained from $T(v)$ by suppressing all vertices $u$ 
with $indeg(v)= 1= outdeg(u)$.
Given a cluster system $\mathcal C$ on $X$, 
we denote by $\mathcal I(\mathcal C)$ the 
graph whose vertex set is $\mathcal C$ and whose 
edge set is the set of pairs 
$\{C,C'\}\in {\mathcal C\choose 2}$ such 
that $C \cap C' \neq \emptyset$, and by 
$\mathcal C_M\subseteq \mathcal C$ the collection of 
 set-inclusion maximal 
elements of $\mathcal C$.

\begin{theorem}\label{ts-clu}
	Let $\mathcal C$ be a cluster system on $X$. Then there exists an arboreal  $| \mathcal C_M|$-rooted 
	network $N$ such that $\mathcal C(N)=\mathcal C$ if and only if:
	\begin{itemize}
		\item[(P1)] For all $C \in \mathcal C_M$, the set $\{C' \in \mathcal C\,:\, C' \subseteq C\}$ 
		is a hierarchy that contains all trivial  clusters on $C$.
		\item[(P2)] The graph $\mathcal I(\mathcal C_M)$ is connected. 
		\item[(P3)] For any two $C_1,C_2 \in \mathcal C_M$, we have $C_1 \cap C_2 \in \mathcal C \cup \{\emptyset\}$.
	\end{itemize}
\end{theorem}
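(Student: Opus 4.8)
The plan is to prove both directions of the equivalence by carefully tracking how the clusters of an arboreal network decompose into the clusters coming from each of its rooted subtrees, and then recognizing the combinatorial conditions (P1)--(P3) as exactly what is needed to reassemble such a network from a given cluster system $\mathcal C$.

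For the forward direction, suppose $N$ is an arboreal network on $X$ with $R(N)=\{r_1,\ldots,r_m\}$ and $\mathcal C(N)=\mathcal C$. First I would observe that since the underlying graph of $N$ is a tree, the clusters $C(r_1),\ldots,C(r_m)$ are pairwise either disjoint or nested, but in fact I expect they are exactly the inclusion-maximal elements of $\mathcal C$: every vertex $u$ lies below some root (as $N$ is a forest-like DAG once arcs are oriented) so $C(u)\subseteq C(r_i)$ for some $i$, giving $\mathcal C_M\subseteq\{C(r_1),\ldots,C(r_m)\}$; conversely, I would need to rule out that one $C(r_i)$ is strictly contained in another, which follows because a directed path witnessing $C(r_i)\subsetneq C(r_j)$ would force $r_i$ to lie below $r_j$, contradicting that $r_i$ is a root of a connected acyclic graph with all arcs oriented consistently --- here the arboreal (tree underlying) structure is what makes this clean, and it also gives $|\mathcal C_M|=m$. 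Then (P1) follows from analyzing $T_{r_i}$: the subtree $T(r_i)$ spanned by the leaves below $r_i$, once we check that it is genuinely a phylogenetic tree structure (no hybrid vertex survives because in an arboreal network a hybrid vertex has in-degree $2$ in $N$ but the span of leaves below a single root cannot pick up both in-arcs without creating a cycle or forcing the hybrid below two roots), so its clusters form a hierarchy on $C(r_i)$ containing all trivial clusters. For (P2), I would translate connectivity of $\mathcal I(\mathcal C_M)$ into connectivity of $N$ itself: two roots $r_i,r_j$ with $C(r_i)\cap C(r_j)\neq\emptyset$ correspond to a shared leaf below both, hence (using that $N$ is connected and that its only "merge" points are hybrid vertices) a chain of such overlaps realizes a path in $N$; conversely the connectivity of $N$ forces the overlap graph to be connected. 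And (P3) is just the statement that $C(r_i)\cap C(r_j)$ is again a cluster of $N$ --- this should come from the fact that the highest common descendant structure of $r_i$ and $r_j$ (or rather, the maximal vertices below both) is a single vertex because $N$ is arboreal, so $C(r_i)\cap C(r_j)=C(w)$ for some vertex $w$, or is empty.

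For the converse, assume (P1)--(P3) hold; I would build $N$ explicitly. By (P1), for each $C\in\mathcal C_M$ the family $\mathcal C_C=\{C'\in\mathcal C: C'\subseteq C\}$ is a hierarchy containing all trivial clusters on $C$, so by the classical tree/hierarchy equivalence there is a unique phylogenetic tree $T_C$ on $C$ with $\mathcal C(T_C)=\mathcal C_C$. Now I would glue these trees together using (P3) and (P2): for each pair $C_1,C_2\in\mathcal C_M$ with $C_1\cap C_2\neq\emptyset$, property (P3) gives that $C_1\cap C_2\in\mathcal C$, hence $C_1\cap C_2\in\mathcal C_{C_1}\cap\mathcal C_{C_2}$, so it is realized by a vertex $u_1$ of $T_{C_1}$ and a vertex $u_2$ of $T_{C_2}$; I want to identify leaves/vertices appropriately by inserting a hybrid vertex below both $u_1$ and $u_2$ --- or more precisely, to avoid leaf duplication I would take one copy of the subtree below $u_1$, delete the one below $u_2$, and make $u_2$ a parent of the root of that subtree via a new hybrid vertex. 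The connectivity condition (P2) ensures the resulting graph is connected, and since each gluing adds exactly one hybrid vertex while the underlying structure stays a tree (we never create a cycle because $\mathcal I(\mathcal C_M)$ is connected and we only add $|\mathcal C_M|-1$ gluings if we glue along a spanning tree of $\mathcal I(\mathcal C_M)$) the result is arboreal. The main subtlety here, and what I expect to be the central obstacle, is verifying that $\mathcal C(N)=\mathcal C$ exactly --- one must check that the gluing does not create any spurious new clusters (a vertex of the glued graph whose set of descendant leaves is not in $\mathcal C$) and does not destroy any cluster of $\mathcal C$; this requires a careful case analysis showing that every internal vertex of the glued $N$ still induces the same cluster it did in its original $T_C$, which uses (P3) to control how the overlaps $C_1\cap C_2$ sit inside each hierarchy, and it is here that one must be careful about whether to glue at internal vertices or only at leaves, and about the ordering in which overlapping maximal clusters are merged.

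Finally I would tie the root count together: in the construction, each $T_C$ contributes one root and each of the $|\mathcal C_M|-1$ gluing steps (along a spanning tree of the connected graph $\mathcal I(\mathcal C_M)$) removes exactly one root by making a former root into a hybrid or tree vertex below some $u_2$, so $|R(N)|=|\mathcal C_M|$, as required; consistency with $|H(N)|=|R(N)|-1$ for arboreal networks is a useful sanity check. The hard part, to restate, is the bookkeeping in the converse direction ensuring $\mathcal C(N)=\mathcal C$ precisely, and I would organize that step as a lemma: if $H$ is a hierarchy on $C$ realized by phylogenetic tree $T_C$, and $D\in H$ is realized by vertex $u$ of $T_C$, then grafting another tree beneath $u$ (via a new hybrid edge) changes the cluster of $u$'s ancestors only by adding leaves from the grafted tree, which are already accounted for because those leaves lie in some other maximal cluster $C'$ and the grafting is exactly along $C\cap C'$.
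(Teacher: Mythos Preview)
Your overall approach matches the paper's. For the forward direction the paper also first proves (as a separate lemma) that $\mathcal C_M=\{C(r):r\in R(N)\}$, then derives (P1) from the fact that $T_r$ is a phylogenetic tree on $C(r)$, (P2) by tracing the hybrid vertices crossed along an undirected path between two roots, and (P3) by locating a single topmost hybrid below any two given roots and observing that its cluster equals the intersection of the two root clusters. For the converse the paper likewise builds a phylogenetic tree $T(C)$ for each $C\in\mathcal C_M$ via the hierarchy--tree correspondence and then glues these trees one at a time, at each stage attaching the next $T(C)$ at a vertex of the current partial network realising a common cluster. (One small slip in your sketch: the root clusters of an arboreal multi-rooted network are \emph{not} pairwise disjoint-or-nested in general --- they overlap properly whenever two roots share a hybrid descendant --- but your subsequent argument that they are precisely $\mathcal C_M$ does not actually rely on that claim.)

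There is, however, a genuine gap in your converse plan, and the paper's argument shares it. You propose to glue along a spanning tree of $\mathcal I(\mathcal C_M)$, performing $|\mathcal C_M|-1$ gluings and adding one hybrid each time. But if $\mathcal I(\mathcal C_M)$ contains a cycle, then when you attach some $T_C$ it may overlap the already-built network in \emph{two or more} incomparable clusters; a single gluing leaves shared leaves duplicated, while identifying them forces an extra hybrid and destroys arborealness. The paper glosses over this by asserting there is ``exactly one'' attachment vertex, which fails for the same reason. Concretely, let $X=\{1,\dots,6\}$ and take $\mathcal C$ to consist of $C_1=\{1,2,3\}$, $C_2=\{3,4,5\}$, $C_3=\{1,5,6\}$ together with all singletons. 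Then (P1)--(P3) all hold, yet there is no arboreal $3$-network $N$ with $\mathcal C(N)=\mathcal C$: such an $N$ would have $|H(N)|=|R(N)|-1=2$, but since $C_1\cap C_2=\{3\}$, $C_1\cap C_3=\{1\}$, $C_2\cap C_3=\{5\}$ are three distinct nonempty sets, the three ``topmost common hybrid'' vertices (one for each pair of roots, each realising the corresponding intersection) must be three distinct hybrids. So the step you correctly flag as ``the central obstacle'' is not mere bookkeeping --- as the statement is written, it cannot be completed.
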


To establish this result, we will use the following lemma:

\begin{lemma}\label{max-c}
	Let $N$ be an arboreal network on $X$. Then the set inclusion 
	maximal elements of $\mathcal C(N)$ are precisely the clusters $C(r)$ with $r \in R(N)$.
\end{lemma}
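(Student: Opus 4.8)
The plan is to prove the two set-inclusions separately, using the fact that $N$ is arboreal (so its underlying undirected graph is a tree) together with the elementary count $|H(N)| = |R(N)| - 1$.

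First I would show that every cluster $C(r)$ with $r \in R(N)$ is set-inclusion maximal in $\mathcal C(N)$. Suppose not: then there is a vertex $u \in V(N)$ with $C(r) \subsetneq C(u)$. Pick a leaf $x \in C(r)$; then there is a directed path from $r$ to $x$ and a directed path from $u$ to $x$, so $u$ and $r$ have a common descendant. Since $r$ is a root (indegree $0$) and $u \ne r$, I would argue that in the underlying tree of $N$ there would be two internally-disjoint paths from $u$ down to $x$ and from $r$ down to $x$ meeting only near $x$; more carefully, following parents upward from $x$ one reaches both $r$ and an ancestor-or-equal of $u$, and since each non-root vertex has a parent, the two ascending routes to the roots $r$ and (a root above) $u$ create a cycle in the underlying undirected tree unless $r$ lies on the path from $u$ to $x$ — but $r$ has indegree $0$, so $r$ cannot lie strictly below $u$. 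This contradiction shows $C(r)$ is maximal. (The genuinely clean way to say this: in an arboreal network, for $x$ a leaf the set of ancestors of $x$ is totally ordered by the ``below'' relation, because the undirected graph being a tree forces a unique path from $x$ upward; hence $x$ lies below a unique root, and if $x \in C(r) \cap C(u)$ then $u$ and $r$ are comparable, so $u$ is below $r$, giving $C(u) \subseteq C(r)$.)

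Next I would show the reverse: every set-inclusion maximal element of $\mathcal C(N)$ has the form $C(r)$ for some root $r$. Let $C \in \mathcal C(N)$ be maximal, say $C = C(u)$ for some $u \in V(N)$. If $u$ is a root we are done. Otherwise $u$ has at least one parent $p$, and then $C(u) \subseteq C(p)$, so by maximality $C(u) = C(p)$, and we may replace $u$ by $p$. Since $N$ is acyclic and finite, iterating this strictly moves us ``upward'' (or at least never downward) and must terminate at a root $r$ with $C(r) = C(u) = C$. The only subtlety is to make sure the process terminates: I would invoke acyclicity — the sequence $u, p, \dots$ consists of distinct vertices because a repeat would give a directed cycle — so after finitely many steps we hit a vertex with no parent, i.e. a root.

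Combining the two inclusions gives that the set-inclusion maximal elements of $\mathcal C(N)$ are exactly $\{C(r) : r \in R(N)\}$. The main obstacle is the first inclusion — arguing rigorously that $C(r)$ cannot be properly contained in some $C(u)$ — and here the cleanest route is really the observation that arboreality forces the ancestor set of each leaf to be a chain, so two vertices sharing a descendant leaf are comparable; everything else is routine. I would also note in passing (though it is not strictly needed) that distinct roots give distinct, indeed disjoint-or-incomparable maximal clusters, but since the statement only asks for the identification of the maximal clusters as a set, the two inclusions above suffice.
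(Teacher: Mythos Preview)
Your proof is correct and follows essentially the same route as the paper: the paper dismisses the direction ``every maximal cluster comes from a root'' as clear (your climbing-to-a-root argument), and proves that every root cluster is maximal by exactly your cycle-in-the-underlying-tree contradiction. One small slip: your closing parenthetical that distinct roots give ``disjoint-or-incomparable'' clusters is false in general (root clusters in arboreal networks can overlap via hybrid vertices), but as you note this plays no role in the proof.
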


\begin{proof}
	If $|X|=1$ then the lemma trivially holds. So assume that $|X|\geq 2$.
	Clearly, all set-inclusion maximal elements $C$ of $\mathcal C(N)$ are 
	such that $C=C(r)$, for some $r \in R(N)$. Assume for contradiction that 
	there exists a root $r \in R(N)$ such that $C(r)$ is not set-inclusion maximal 
	in $\mathcal C(N)$. Then there must exists $v \in V(N)$ such that $C(r) \subsetneq C(v)$.  
	Hence, for all $x\in C(r)$, there exists a directed path from $v$ to $x$. 
	Since $v$ cannot be an ancestor of $r$ (as $r$ is a root of $N$), it follows 
	that $v$ and $r$ are vertices in a cycle in the underlying undirected graph 
	of $N$ which contradicts the assumption that $N$ is arboreal.
\end{proof}

\noindent{\em Proof of Theorem~\ref{ts-clu}:}
	Since the theorem clearly holds if $|X|=1$, we may assume that $|X|\geq 2$.
	Put $m=| \mathcal C_M|$.
	Assume first that there exists an arboreal $m$-network $N$ 
	such that $\mathcal C(N)=\mathcal C$. By Lemma~\ref{max-c},  
	we have $\mathcal C_M=\{C(r)\,|\, r\in R(N)\}$.  Let $C \in \mathcal C_M$ and 
	let $r$ be the root of $N$ such that $C=C(r)$. 
	%Let $T_r$ be the subgraph of $N$ with root $r$ consisting of
	%the set of all arcs and vertices that
	%can be reached from $r$ via a directed path in $N$ starting at $r$
	%with all vertices of indegree and outdegree one suppressed.
	Then, since $N$ is arboreal, $T_r$ is a phylogenetic tree on some subset $X_r$ of $X$.
	Hence, $\mathcal C(T_r)$ is a hierarchy on $X_r$. Since $\mathcal C(T_r) = \{C' \in \mathcal C\,|\, C' \subseteq C\}$, 
	it follows that Property~(P1) must hold.
	
	To see that Property~(P2) holds, 
	Assume first that $|\mathcal C_M|=1$. Since a graph consisting of a single 
		vertex is connected, the theorem holds. So assume that $|\mathcal C_M|\geq2$.
Let $r$ and $r'$ be two roots of 
	$N$. Since $N$ is connected, there exists an undirected path in $N$ 
	between $r$ and $r'$. Let $h_1, \ldots, h_k$, $k \geq 1$, be the hybrid vertices of $N$ 
	successively crossed by that path. For all $1 \leq i \leq k$, all sets in $\mathcal C_M$ 
	corresponding to roots that are ancestors of $h_i$ form a clique in $\mathcal I(\mathcal C_M)$, 
	since they all contain the cluster $C(h_i)$. Since $r$ is an ancestor 
	of $h_1$ and $r'$ an ancestor of $h_k$, it follows that there exists a 
	path in $\mathcal I(\mathcal C_M)$ joining $C(r)$ and $C(r')$. 
	Hence, $\mathcal I(\mathcal C_M)$ is connected and Property~(P2) holds.
	
	To see that Property~(P3) holds, let $C_1$ and $C_2$ be
	two elements of $\mathcal C_M$, and let $r_1$ and $r_2$ be 
	the roots of $N$, so that $C_1=C(r_1)$ and $C_2=C(r_2)$. 
	Consider the set $H_{1,2}$ of all hybrid vertices that are 
	below both $r_1$ and $r_2$. If $H_{1,2}=\emptyset$, then $C_1 \cap C_2= \emptyset$. 
	If $|H_{1,2}| \geq 1$, then since $N$ is arboreal, there 
	exists a directed path in $N$ containing all 
	vertices in $H_{1,2}$. In particular, there is a vertex $h \in H_{1,2}$ 
	that is an ancestor of all vertices of $H_{1,2}$ in $N$. 
	This vertex $h$ satisfies $C(h)=C_1 \cap C_2$, and so $C_1 \cap C_2 \in \mathcal C$.
	Thus, Property~(P3) holds.
	
	Conversely, assume that $\mathcal C$ is a cluster system on $X$ that 
	satisfies Properties~(P1)--(P3). Then, for all $C \in \mathcal C_M$, 
	Property~(P1) implies that the set $\mathcal C_C=\{C' \in \mathcal C\,|\, C' \subseteq C\}$ 
	is a hierarchy on $C$ that contains all trivial clusters on $C$. Hence, 
	by the remark above, there 
	exists a unique (up to equivalence) phylogenetic tree $T(C)$ on $C$ 
	such that $\mathcal C(T(C))=\mathcal C_C$.
	Put $F=\{T(C)\,|\, C \in \mathcal C_M\}$, and note that $F$ need not be a phylogenetic forest on $X$ 
	since the leaf sets of the trees in $F$ might not be pairwise disjoint.
	
	We next use the trees in $F$ to recursively construct an arboreal $m$-rooted 
	network $N$ such that $\mathcal C(N)=\mathcal C$. 
	Put $F=\{T_1,\ldots, T_m\}$. First, let $N_1$ be some tree in $F$, 
	which, without loss of generality, we may assume to be
	$T_1$. Clearly, $T$ is an arboreal $1$-network. Let $1 \leq i < m$ and assume that, for all $1\leq j\leq i$, we 
	have already constructed  an arboreal $j$-network $N_j$ by processing (subject 
	to potentially having to relabel the trees in $F$) the tree $T_j\in F$. We now
	construct an arboreal $(i+1)$-network $N_{i+1}$ from $N_i$ as follows. 
	
	First, we choose a tree  $T\in F-\{T_1, \ldots, T_i\}$ such that 
	$\mathcal C (T) \cap \mathcal C(N_i) \neq \emptyset$,
	%$C_T(\rho(T)) \cap \mathcal C(N_i) \neq \emptyset$, where $\rho(T)$ denotes the root of $T$. 
	Note that it is always possible to 
	find such a tree $T$ due to the connectivity of $\mathcal I(\mathcal C_M)$ 
	that is guaranteed by Property~(P2). 
	Also note that we may assume without loss of generality that $T=T_{i+1}$. 
	Because of Property~(P3), there exists exactly one tree-vertex $u_i$ in $N_i$ 
	and one vertex $v_{i+1}$ in $T$ such that
	$C_{N_i}(u_i)=C_{T}(v_{i+1})$.
%$	=C_T(\rho(T)) \cap C$, some $C\in\mathcal C(N_i)$. 
	% $C_{N_i}(u_i)=C_{T}(v_{i+1})=C_T(\rho(T)) \cap \mathcal C(N_i)$. 
	If $u_i$ were a root of $N_i$ then since, by 
		Lemma~\ref{max-c}, $C_{N_i}(u_i)$ is a maximal cluster
		for $N_i$ it follows that  $C_T(v_{i+1})$ is also a maximal cluster of $N_i$. The definition of $F$ implies that
		$T(C_{N_i}(u_i))=T(C_T(v_{i+1}))=T$ which is impossible as $T$ has not been processed yet. So $u_i$ cannot be a root of $N_i$. 
	We then define $N_{i+1}$ as the $(i+1)$-rooted directed graph 
	obtained from $N_i$ by subdividing the incoming arc of $u_i$ in $N_i$ 
	by a vertex $w$, removing all arcs and vertices below $v_{i+1}$ in $T$, and identifying $v_{i+1}$ with $w$. 
	
	By construction, $N_{i+1}$ is clearly a $(i+1)$-network satisfying 
	$\mathcal C(N_{i+1})=\bigcup_{1 \leq j \leq i+1} \mathcal C(T_j)$. 
	Furthermore, since $N_i$ is  arboreal $N_{i+1}$ must also be arboreal.
	In particular, this implies that $N=N_m$ is a $m$-rooted network 
	satisfying $\mathcal C(N_m)=\mathcal C$. This concludes the proof. \qed\\
	
We now turn to the question of uniqueness.
Note that the construction of a network $N$ from 
a cluster system $\mathcal C$ on $X$ satisfying Properties~(P1)--(P3) as described 
in the proof of Theorem~\ref{ts-clu} requires choices to be made (e.g. the 
order in which the trees in the forest are processed in case there is a tie). As a consequence, the 
resulting network $N$ satisfying $\mathcal C(N)=\mathcal C$ need not be unique.
This issue is illustrated in Figure~\ref{inequivalent}. However, 
defining an arc in a network to be {\em bad} if both of its
end vertices are contained in $H(N)$, we  have the following result:

%{\bf VM: note that in the following we might want to use the notion of ``compressed networks"! \cite[p.251]{S16}}

\begin{theorem}\label{tu-clu}
		Let $\mathcal C$ be a cluster system on $X$ that satisfies
		Properties~(P1) -- (P3) and contains all trivial clusters on $X$. 
		Then, up to equivalence, there exists a unique arboreal
		network $N$ on $X$ satisfying $\mathcal C(N)= \mathcal C$ if and only 
		if for all $C_1, C_2\in \mathcal C_M$ distinct such that  $C_1\cap C_2\not=\emptyset$
		and all $C_3\in \mathcal C_M-\{C_1,C_2\}$, we have
		$C_1 \cap C_2 \neq C_1 \cap C_3$.
		Moreover, if $N$ and $N'$ are two arboreal networks on $X$, 
		then $\mathcal C(N)=\mathcal C(N')$ if and only if $N$ and $N'$ are 
		equivalent after collapsing all bad arcs.	
\end{theorem}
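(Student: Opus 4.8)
\emph{Proof proposal.} Set $\mathcal C=\mathcal C(N)$. For each $C\in\mathcal C_M$, Property~(P1) yields a unique phylogenetic tree $T(C)$ on $C$ with $\mathcal C(T(C))=\{C'\in\mathcal C:C'\subseteq C\}$; since $\mathcal C$ also satisfies (P2)--(P3), an arboreal realizer of $\mathcal C$ exists by Theorem~\ref{ts-clu}, so the content of the first statement is uniqueness. I would prove the ``moreover'' statement first. The starting observation is that \emph{collapsing a bad arc preserves the cluster system}: if $(h,h')$ is a bad arc then, as $N$ is semi-binary, $h$ has outdegree one with $h'$ its only child, so $C_N(h)=C_N(h')$, and identifying $h$ with $h'$ neither creates nor destroys a cluster. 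Since a hybrid vertex has outdegree one, the bad arcs form vertex-disjoint directed paths inside $H(N)$ whose vertices all carry one common cluster; collapsing each such path to a point is therefore well defined and leaves $\mathcal C(N)$ unchanged. Writing $N_0$ for the result, this already gives the easy implication of the ``moreover'': if $N_0$ and $N_0'$ are isomorphic via a map fixing $X$, then $\mathcal C(N)=\mathcal C(N_0)=\mathcal C(N_0')=\mathcal C(N')$.

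For the converse, suppose $\mathcal C(N)=\mathcal C(N')=\mathcal C$. I would first record two facts about an arboreal network $N$, refining arguments already used in the proof of Theorem~\ref{ts-clu}: \emph{(i)} for any nonempty $R'\subseteq R(N)$, the vertices of $N$ lying below every root in $R'$ induce a directed path whose top vertex $t$ satisfies $C_N(t)=\bigcap_{r\in R'}C_N(r)$, so this top vertex is uniquely determined; and \emph{(ii)} if $u\ne v$ with $C_N(u)=C_N(v)$ then one of $u,v$ is an ancestor of the other, and every arc of the connecting directed path except possibly the last one is bad. Both follow from the underlying graph being a tree: two directed paths into a common vertex must merge, and a further merge would close a cycle. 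By \emph{(ii)}, in $N_0$ the cluster $C_{N_0}(v)$, refined by whether $v$ is a hybrid vertex, is a complete invariant of $v$, and $N_0$ has no bad arc. I would then match $N_0$ with $N_0'$ by sending a non-hybrid vertex $v$ of $N_0$ to the unique non-hybrid vertex of $N_0'$ with cluster $C_{N_0}(v)$, and a hybrid vertex $h$ to the unique hybrid vertex of $N_0'$ whose child carries the cluster $C_{N_0}(h)$; this map fixes $X$. Its well-definedness follows from Lemma~\ref{max-c} and fact~\emph{(i)} applied to subsets of $R(N)$. The main remaining point is that the map preserves arcs: arcs of $N_0$ are either arcs of an embedded copy of some $T(C)$, which are dictated by the hierarchy $\{C'\in\mathcal C:C'\subseteq C\}$ and hence preserved, or contact arcs, each running from a non-hybrid vertex to a hybrid vertex, and for these one shows, again by \emph{(i)}, that the parents of the hybrid vertex of cluster $D$ are exactly the vertices carrying, for each $C\in\mathcal C_M$ with $D\subseteq C$, the $\subseteq$-minimal cluster $E\in\mathcal C$ with $D\subsetneq E\subseteq C$ --- a description depending on $\mathcal C$ alone. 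This proves the ``moreover''.

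Finally I would deduce the characterization. By the ``moreover'', a unique arboreal realizer exists if and only if every arboreal realizer equals its collapse, i.e.\ contains no bad arc, so it suffices to show that \emph{some} arboreal realizer has a bad arc precisely when the displayed condition fails. If the condition fails, fix distinct $C_1,C_2,C_3\in\mathcal C_M$ with $\emptyset\ne C_1\cap C_2=C_1\cap C_3=:D$, so that $D\in\mathcal C$ by (P3): running the gluing construction of Theorem~\ref{ts-clu} but attaching the embedded copies of $T(C_2)$ and $T(C_3)$ to that of $T(C_1)$ through a directed path of two hybrid vertices both of cluster $D$, rather than at a single node, should yield an arboreal realizer of $\mathcal C$ with a bad arc. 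Conversely, let $N$ be an arboreal realizer with a bad arc $(h_1,h_2)$. Since the underlying graph is a tree, routing the two arcs into $h_1$ up to a common root would close a cycle, so $h_1$ lies below two distinct roots; the same argument applied to $h_2$ shows that the parent of $h_2$ other than $h_1$ leads up to a third distinct root. This produces distinct $C_1,C_2,C_3\in\mathcal C_M$, and applying \emph{(i)} to the three pairs of roots forces $C_N(h_1)=C_N(h_2)$ to equal at least two of $C_1\cap C_2$, $C_1\cap C_3$, $C_2\cap C_3$; after relabelling, $C_1\cap C_2=C_1\cap C_3$ and the condition fails. Hence uniqueness holds exactly when the condition holds.

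I expect the main obstacle to be this last equivalence, and, inside the ``moreover'', the arc-preserving step: one must describe precisely how the trees $T(C)$ glue along their overlaps, so that the contact arcs of a collapsed arboreal network can be recovered from $\mathcal C$, and dually so that a single bad arc can be traced back to an equality between two pairwise intersections of maximal clusters. The rest should be routine once facts~\emph{(i)} and \emph{(ii)} on the tree structure of an arboreal network are in place.
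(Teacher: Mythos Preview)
Your strategy is sound and ultimately lands in the same place as the paper, but the organization is genuinely different. The paper does \emph{not} prove the ``moreover'' clause directly; instead it isolates the auxiliary statement that an arboreal network $N$ is uniquely determined by $\mathcal C(N)$ if and only if $N$ has no bad arc (Lemma~\ref{lem:bad}), and then declares that both parts of Theorem~\ref{tu-clu} follow from that lemma, its proof, and Theorem~\ref{ts-clu}. The forward direction of the lemma is dispatched by a one-line arc swap: given a bad arc $(u,v)$, exchange one incoming arc of $u$ with the incoming arc of $v$ not equal to $(u,v)$; the resulting inequivalent network has the same clusters. The converse is a contradiction argument via the collapse $Comp(N)$ (your $N_0$), split into the cases $Comp(N')=N'$ and $Comp(N')\neq N'$; in the second case the paper extracts three roots with $C(r_1)\cap C(r_2)=C(r_1)\cap C(r_3)$, which is exactly how the intersection condition enters. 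You instead establish the ``moreover'' clause first by constructing an explicit isomorphism $N_0\to N_0'$, using the pair $(C_{N_0}(v),\text{hybrid/non-hybrid})$ as a complete vertex invariant, and then argue the characterization separately. Your route gives more structural information (an explicit bijection, and a cluster-theoretic description of the parents of each hybrid in $N_0$), at the price of more bookkeeping in the arc-preservation step; the paper's arc-swap avoids all of that.

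Two points in your write-up need repair, though neither breaks the argument. First, your fact~\emph{(i)} is false as stated: the set of vertices below every root in $R'$ is a subtree, not a directed path (already for $|R'|=1$, and also for $|R'|=2$ once a tree vertex sits strictly below the top merge point). What you actually use, and what is true, is that this set, when nonempty, has a unique highest vertex $t$ with $C_N(t)=\bigcap_{r\in R'}C_N(r)$. Second, the bad arcs need not form vertex-disjoint directed paths: a hybrid vertex may have both parents hybrid, giving indegree two in the bad-arc subgraph. Since $N$ is arboreal that subgraph is still a forest, so collapsing each connected component to a point is well defined and your $N_0$ is unaffected. Finally, your construction of a realizer with a bad arc when the intersection condition fails is only sketched (``should yield''); the paper handles the corresponding implication implicitly via the arc-swap, which you could also invoke in place of the explicit gluing.
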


This theorem is a consequence of the following lemma, its proof and Theorem~\ref{ts-clu}.
	
\begin{lemma}\label{lem:bad}
	Let $N$ be an arboreal network on $X$. Then $N$ 
	is uniquely determined by $\mathcal C(N)$ if and only if $N$ contains no bad arcs.
\end{lemma}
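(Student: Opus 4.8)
The plan is to prove Lemma~\ref{lem:bad} in both directions, and the key structural fact to establish first is a precise description of how bad arcs create ambiguity. Recall that a bad arc $(h_1,h_2)$ has both end vertices in $H(N)$; since $N$ is arboreal and semi-binary, $h_1$ has a unique child, namely $h_2$, and $h_2$ has indegree two, say with parents $h_1$ and some other vertex $p$. First I would observe that $C_N(h_1)=C_N(h_2)$, since $h_2$ is the only child of $h_1$. This is the source of non-uniqueness: the cluster $C_N(h_2)$ is ``attached'' to two reticulation vertices stacked on top of one another, and the cluster system records nothing that distinguishes the order in which the two incoming arcs of the pair enter.

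For the ``only if'' direction (uniqueness implies no bad arcs), I would suppose $N$ contains a bad arc $a=(h_1,h_2)$ and construct an inequivalent arboreal $N'$ with $\mathcal C(N')=\mathcal C(N)$, generalizing the example in Figure~\ref{inequivalent}. Write the two incoming arcs of $h_1$ as coming from vertices $q_1,q_2$ (or, if $h_1$ has a single incoming arc because $h_1$ is not itself a hybrid vertex head of two arcs — but here $h_1\in H(N)$ so indegree two), and the incoming arcs of $h_2$ as $(h_1,h_2)$ and $(p,h_2)$. The operation is to ``swap'' the roles of $h_1$ and $h_2$: delete $h_1,h_2$ and the local arcs, and reinsert them so that $p$'s arc enters the upper reticulation and the arc chain feeding $q_1,q_2$ enters the lower one — equivalently, reverse which of the two ``merged'' cluster-copies sits above the other. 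One checks that the resulting graph $N'$ is still arboreal, still semi-binary, and that every vertex induces the same cluster (the only clusters touched are $C_N(h_1)=C_N(h_2)$, which persist), so $\mathcal C(N')=\mathcal C(N)$; but the parent structure around the reticulation pair differs, so $N'\not\equiv N$. Hence $N$ is not uniquely determined.

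For the ``if'' direction (no bad arcs implies uniqueness), I would let $N'$ be any arboreal network with $\mathcal C(N')=\mathcal C(N)$ and build an equivalence $\psi:V(N)\to V(N')$ that is the identity on $X$. By Lemma~\ref{max-c}, the maximal clusters of $\mathcal C(N)$ are exactly $\{C_N(r):r\in R(N)\}$ and likewise for $N'$, giving a canonical bijection on roots. Restricting to the region below a single root $r$, the relevant portion is, after suppressing degree-$(1,1)$ vertices, the phylogenetic tree $T_r$ on $X_r$, and by Property~(P1) together with the classical hierarchy-equals-tree theorem quoted in the text, this tree is determined up to equivalence by the hierarchy $\{C'\in\mathcal C(N):C'\subseteq C_N(r)\}$. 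The subtlety is gluing these trees back together through the hybrid vertices: a hybrid vertex $h$ with $C_N(h)=C$ corresponds to a vertex whose cluster $C$ lies in several maximal clusters, and with no bad arcs present each such $C$ is realized by exactly one hybrid vertex, so the attachment points are forced. I would argue, by induction on $|R(N)|$ mirroring the recursive construction in the proof of Theorem~\ref{ts-clu} (process the roots one at a time, attaching each new tree $T(C)$ at the unique tree-vertex carrying the shared cluster guaranteed by Property~(P3)), that at every stage the choice is uniquely determined precisely because no reticulation sits directly above another sharing its cluster — that is the hypothesis ``no bad arcs'' — and hence $N'$ must be equivalent to $N$.

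The main obstacle I anticipate is the ``if'' direction, specifically verifying that the reconstruction genuinely has no freedom when bad arcs are absent: one must rule out the possibility that two distinct arrangements of the hybrid vertices, none of them stacked, could still yield the same cluster system. The clean way to handle this is to show that in a bad-arc-free arboreal network every hybrid vertex $h$ is the head of exactly one arc whose tail is a tree vertex and exactly one whose tail is a root-or-tree vertex, so that $h$ is recovered as the (unique) vertex common to two different ``branches'' each determined by an already-reconstructed tree; then an equivalence can be assembled branch by branch. Once this bijection-on-hybrid-vertices claim is in place, the rest is the routine hierarchy argument applied componentwise, and Theorem~\ref{tu-clu} follows by combining Lemma~\ref{lem:bad} with Theorem~\ref{ts-clu} and the observation that collapsing all bad arcs of an arbitrary arboreal network produces a bad-arc-free one with the same cluster system.
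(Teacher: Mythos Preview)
Your ``only if'' direction is essentially the paper's: given a bad arc $(u,v)$, swap one parent arc of $u$ with the non-$u$ parent arc of $v$ to obtain an inequivalent arboreal network with the same cluster system. The paper states this as a two-arc swap rather than a delete-and-reinsert, but the content is identical.

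Your ``if'' direction takes a genuinely different route from the paper. The paper argues by contradiction: assuming a non-equivalent $N'$ exists, it introduces the operator $Comp(\cdot)$ that collapses all hybrid-only paths, observes that $N=Comp(N)$ since $N$ has no bad arcs, and then splits into two cases according to whether $Comp(N')=N'$. Each case is dispatched by a short cluster-intersection computation. Your plan instead tries to \emph{reconstruct} $N$ canonically from $\mathcal C(N)$ via the inductive attachment procedure of Theorem~\ref{ts-clu}, arguing that the absence of bad arcs in $N$ pins down every attachment.

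There is a real gap in your direction, however. Your argument (as written) only establishes that $N$ is the unique \emph{bad-arc-free} arboreal network with cluster system $\mathcal C(N)$. You never rule out the possibility that some arboreal $N'$ \emph{with} bad arcs satisfies $\mathcal C(N')=\mathcal C(N)$; if such an $N'$ existed, $N$ would not be uniquely determined and the lemma would fail. Concretely, your key sentence ``with no bad arcs present each such $C$ is realized by exactly one hybrid vertex, so the attachment points are forced'' applies the no-bad-arc hypothesis to $N$, but to build the equivalence $\psi:V(N)\to V(N')$ you need the attachment points in $N'$ to be forced as well, and you have no hypothesis on $N'$. The paper handles exactly this in its Case~2: if $N'$ has a bad arc $(h_1,h_2)$ with tree-vertex child $w$, one finds three roots $r_1,r_2,r_3$ of $N'$ with $C_{N'}(r_1)\cap C_{N'}(r_2)=C_{N'}(r_1)\cap C_{N'}(r_3)=C_{N'}(w)$, and transporting this equality of intersections back to $N$ via the cluster bijection forces $N$ itself to contain a bad arc, a contradiction. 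Some argument of this kind is needed to close your proof; the ``bijection-on-hybrid-vertices claim'' you propose in the last paragraph, being a statement about $N$ alone, does not supply it.
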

\begin{proof}
	Clearly, the lemma holds if $|X|=1$. So assume $|X|\geq 2$.
	Suppose first that $N$ is uniquely determined by $\mathcal C(N)$. Assume for contradiction that
	$N$ contains a bad arc $(u,v)$ with $u,v \in V(N)$. Let $p_u$ be a parent of $u$, and let $p_v$ be the 
	parent of $v$ distinct from $u$. Because $N$ is arboreal, there is no 
	directed path either from $p_u$ to $p_v$ or from $p_v$ to $p_u$. 
	Consider the network $N'$ on $X$ obtained from $N$ 
	by replacing the arcs $(p_u,u)$ and $(p_v,v)$ with the arcs $(p_u,v)$ 
	and $(p_v,u)$. Since $C_N(u)=C_N(v)$ it follows that $\mathcal C(N')=\mathcal C(N)$. 
	However, $N$ and $N'$ are not equivalent. This is a contradiction since, by assumption,
	$N$ is uniquely determined by $\mathcal C(N)$. 
	
	Conversely, suppose that $N$ does not contain a bad arc.
	Assume for  contradiction that there exists an arboreal
	network $N'$  on $X$ such that $\mathcal C(N')=\mathcal C(N)$ but $N$ and $N'$ are 
	not equivalent. Since a hybrid vertex  in a network on $X$ induces the same cluster on $X$ as its child,
	 it follows that there must exist a bijection $\chi$ between the set $T(N)$ of root and tree vertices of 
	 $N$ and the set $T(N')$ of root and tree vertices of $N'$ such that
	$C_N(v)=C_{N'}(\chi(v))$, for all $v\in T(N)$.
	Let $Comp(M)$ denote the multiply rooted 
	graph obtained from an arboreal network $M$ by collapsing all
	directed paths $P$ in $M$ that start and finish at a tree vertex of $M$ 
	and whose remaining vertices are all contained in $H(M)$. 
	Then since $N$ and $N'$ are not equivalent,
	$Comp(N)$ and $Comp(N')$ are also not equivalent. Moreover,
	$N$ must be $Comp(N)$ because $N$ does not contain a bad arc. 
	We distinguish the cases that $Comp(N')= N'$ and that $Comp(N')\neq N'$.
		
	If $Comp(N')$ is $N'$, then there must be two tree vertices $u$ and $v$ 
	in $N$ and a vertex $h\in H(N')$ such that $(u,v)$ is an arc in $N$ and
	$\chi(u),h,\chi(v)$ is a directed path in $N'$. So
	there must exist some $w\in T(N')-\{\chi(u)\}$ such 
	that $w$ is a parent of $h$. Note that $u\not=\chi^{-1}(w)$. 
	If $u$ were strictly below $\chi^{-1}(w)$ then $C_N(u)\subsetneq C_N(\chi^{-1}(w))$. 
	Hence, $C_N(v)= C_{N'}(\chi(v))=C_{N'}(\chi(u))\cap C_{N'}(w)= C_N(u)\cap C_N(\chi^{-1}(w))=C_N(u)$ 
	and so $C_N(v)=C_N(u)$, a contradiction as $(u,v)$ is an arc in $N$ and so $C_N(v)\not= C_N(u)$. 
	Similar arguments also imply that  $\chi^{-1}(w)$ cannot be 
	strictly below $u$. Since  $C_N(v)=C_{N'}(\chi(v))\subseteq C_{N'}(w)$ 
	it follows that $C_N(v)=C_N(u)\cap C_N(\chi^{-1}(w))=\emptyset$, which is again a contradiction.
		
	If $Comp(N')$ is not $N'$, then $N'$ must
	contain a bad arc, say $(h_1,h_2)$, $h_1, h_2 \in H(N)$. 
	Without loss of generality, we may assume that $h_1$ and $h_2$ 
	are such that the child $w$ of $h_2$ in $N'$ is a tree-vertex. 
	Let $r_1$ and $r_2$ be two distinct roots of $N'$ that are ancestors of $h_1$ 
	and let $r_3$ be a root of $N'$ that is an ancestor of $h_2$ but not of $h_1$. 
	Then $C_{N'}(r_1) \cap C_{N'}(r_2)=C_{N'}(r_1) \cap C_{N'}(r_3)=C_{N'}(w)$.
	It follows that there must exist some tree vertex $w_1\in V(N)$ such that 
	$C_N(\chi^{-1}(r_1)) \cap C_N(\chi^{-1}(r_2))=C_N(\chi^{-1}(r_1)) \cap C_N(\chi^{-1}(r_3))= C_N(w_1)$. 
	Consequently, the parent $p$ of $w_1$ and the parent of $p$ 
	are both hybrid vertices of $N$. But then $N$ contains a bad arc,  a contradiction.
\end{proof}

We now prove the main result of this section.

\begin{theorem}\label{lm-club}
Let $N_1$ and $N_2$ be two distinct arboreal networks on $X$ with 
$\mathcal C(N_1)=\mathcal C(N_2)$. Then $N_1$ is forest-based if and only if $N_2$ is forest-based.
\end{theorem}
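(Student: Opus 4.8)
The natural plan is to reduce to a local move using the structure theorem for arboreal networks with a fixed cluster system, and then to check that the characterization of forest-basedness survives that move. By the last assertion of Theorem~\ref{tu-clu}, $\mathcal C(N_1)=\mathcal C(N_2)$ forces $N_1$ and $N_2$ to be equivalent after collapsing all bad arcs; equivalently $Comp(N_1)$ and $Comp(N_2)$ are equivalent, so $N_1$ and $N_2$ agree outside the sub-digraphs induced by their hybrid vertices (each a disjoint union of directed trees, one per hybrid vertex of $Comp$), and differ only in how each such ``hybrid tree'' is realised. Any two realisations of a single contracted vertex are joined by a finite sequence of \emph{bad-arc swaps} --- the move from the proof of Lemma~\ref{lem:bad}, replacing arcs $(p_u,u),(p_v,v)$ by $(p_u,v),(p_v,u)$ for a bad arc $(u,v)$, with $p_u$ a parent of $u$ and $p_v$ the parent of $v$ other than $u$ --- and this move preserves the cluster system and keeps the network arboreal. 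So it is enough to prove that a single bad-arc swap preserves forest-basedness.

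Suppose then that $N'$ is obtained from the arboreal network $N$ by the bad-arc swap on $(u,v)$. Since $u$ and $v$ are hybrid vertices, $N$ and $N'$ have the same vertex set, the same hybrid set, and the same out-degree at every vertex, and they agree on the predicate ``$x$ has a child that is not a hybrid vertex''; the swap only alters the two parent sets of $u$ and $v$, so the relation ``$x$ and $y$ share a hybrid child'' changes only for pairs meeting $\{p_u,p_v,u,v\}$. I would now apply the characterization of Theorem~\ref{arboreal}: starting from a witnessing sequence $v_1=h,\dots,v_k$ of distinct vertices for a hybrid vertex $h$ of $N$, I would build a witnessing sequence for $h$ in $N'$ --- transferring it verbatim if it misses $\{p_u,p_v,u,v\}$, and otherwise re-routing it near its (essentially unique) passage through this set by substituting the two-step sub-walk that uses the swapped arc with the one available in $N'$, or by inserting or deleting one vertex, using that once such a sequence enters a hybrid tree it is forced to follow that tree until it exits through a vertex with a tree-vertex child. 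The symmetric argument from $N'$ to $N$ then gives the equivalence.

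I expect the last step --- transporting the witnessing sequence across a bad-arc swap --- to be the main obstacle. The awkward configuration is when the sequence enters the hybrid tree containing $u$ and $v$, traverses the arc $(u,v)$, and leaves nearby: the swap reverses the order in which the sequence would visit $u$ and $v$ and reattaches the ``side'' incoming arcs at $u$ and $v$, so one must check that the re-routed sequence consists of distinct vertices and still terminates at a vertex possessing a non-hybrid child; here both the rigidity of witnessing sequences inside a hybrid tree and the fact that $Comp(N)=Comp(N')$ fixes exactly which vertices attach to that hybrid tree are needed. A possibly cleaner route to the same local statement is to argue directly with subdivision forests: for an arboreal network, choosing a subdivision forest amounts to deleting one incoming arc of each hybrid vertex so that no non-leaf vertex becomes a leaf (in particular every hybrid vertex must have a non-hybrid parent, and the ``forced'' deletions sit at the non-hybrid parents); one would transport such a choice for $N$ across the identification $Comp(N)=Comp(N')$ and verify that the no-new-leaf condition is preserved.
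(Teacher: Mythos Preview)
Your primary line of argument has two genuine gaps. First, you assert that any two arboreal networks with the same collapsed graph are connected by a finite sequence of bad-arc swaps, but this is not established anywhere: Lemma~\ref{lem:bad} only exhibits a single swap to show non-uniqueness; it does not prove that such swaps act transitively on the set of semi-binary resolutions of a collapsed hybrid vertex. That statement is plausible, but it needs its own proof. Second, and as you yourself acknowledge, the transport of the witnessing sequences of Theorem~\ref{arboreal} across a single swap is only sketched; the case analysis near $\{p_u,p_v,u,v\}$ is precisely where the work lies, and without it the argument is incomplete.

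The alternative you mention in your final paragraph is essentially the paper's proof, and it is far simpler --- it bypasses both gaps entirely, and it is global rather than swap-by-swap. The key observation that makes it go through cleanly (and that you do not quite isolate) is this: in a semi-binary network every hybrid vertex has outdegree one, so its unique outgoing arc must lie in any subdivision forest, else that hybrid would become a leaf. Hence no contact arc has a hybrid tail, so no contact arc is bad, and therefore the set $I_1$ of contact arcs of $N_1$ survives intact as a set of arcs of $N_0=Comp(N_1)\cong Comp(N_2)$. Via this identification one obtains a set $I_2\subseteq A(N_2)$, and one checks directly that $F_2'=(V_2,A_2- I_2)$ is a subdivision forest for $N_2$: the only non-trivial point is $L(F_2')\subseteq L(N_2)$, and a hypothetical $v_2\in L(F_2')- L(N_2)$ would, after passing to $N_0$ and then resolving back into $N_1$, yield a non-leaf vertex of $N_1$ all of whose outgoing arcs lie in $I_1$, contradicting $L(F_1')=L(N_1)$. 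No swaps, no Theorem~\ref{arboreal}, and no local re-routing are needed.
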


\begin{proof}
If $|X|=1$, then the theorem clearly holds. So assume that $|X|\geq 2$.
Without loss of generality, it
suffices to show that if $N_1=(V_1,A_1)$ is forest-based, then $N_2=(V_2,A_2)$ must be forest-based too. 
So assume that $N_1$ is forest-based, with subdivision forest $F_1'=(V_1,A_1')$. Set $I_1=A_1-A_1'$.

By Theorem~\ref{tu-clu}, $N_1$ and $N_2$ are  equivalent after collapsing all bad arcs. 
Let $N_0$ be the graph obtained from $N_1$ in this way. Note that $N_0$ is 
not a network in our sense, as it is not semi-binary. Clearly, no arc in $I_1$ has a 
hybrid vertex as tail, so all arcs in $I_1$ are arcs of $N_0$. Since $N_0$ 
can also be obtained by collapsing all bad arcs of $N_2$, this induces a 
trivial bijection $\chi$ between $I_1$ and some set $I_2$ of arcs of $N_2$.

It remains to show that the forest $F_2'=(V_2,A_2-I_2)$ is a subdivision 
forest for $N_2$. Clearly, we have that $L(N_2) \subseteq L(F_2')$, and 
since $N_2$ is arboreal, no arc of $I_2$ joins two vertices from the same tree 
in $F_2'$. To see that $L(F_2') \subseteq L(N_2)$ holds too, assume for contradiction that 
there is a vertex $v_2\in L(F_2')-L(N_2)$. Then  all arcs of $N_2$ with tail $v_2$ are in $I_2$. 
Note that $I_2$ has been defined in such a way that no arc in $I_2$ is collapsed 
when transforming $N_2$ into $N_0$. Moreover, the property of having a vertex 
such that all outgoing arcs belong to a given set is preserved when resolving vertices of 
a network. Since $\chi$ is the trivial bijection between $I_1$ and $I_2$, it follows that, there 
must exist a vertex $v_1$ in $N_1$ such that all arcs of $N_1$ 
with tail $v_1$ are in $I_1$. This is a contradiction since $F_1'=(V_1,A_1')$ is a subdivision 
forest for $N_1$ and so $L(F_1')=L(N_1)$. Hence, $N_2$ is forest-based.
\end{proof}

\section{Characterizing proper forest-based networks}
\label{sec:charaterize}

In this section we present two characterizations for proper forest-based  
networks (Theorems~\ref{theo:general} and \ref{col2}). 
Various characterizations have been given for tree-based 
{\em phylogenetic} networks (see e.g. \cite{FS15} and \cite[Theorem 10.17]{S16}).
Some of these are given in terms of bipartite graphs, one of
which from \cite{JvI18} we now recall. Define a vertex in a network 
$N$ to be an \emph{omnian (vertex)} of $N$ if all 
of the children of $v$ are contained in $H(N)$, and let 
$\cO(N)$ denote the set of omnians in $N$ (see 
e.\,g.\, Figures~\ref{fig:nonproper}(i) and \ref{thetaN}(i)). To a 
network $N$ associate the bipartite graph $(U \cup H, E)$,
where $U$ contains a vertex $u_v$ for each omnian $ v \in \cO(N)$, 
$H$ contains a vertex $u_w$ for each hybrid vertex  $w \in H(N)$,
and $E$ consists of the edges $\{u_v,u_w\}$ such that there is a
some $v \in \cO(N)$ and some  $w \in H(N)$ with $(v,w)$ an arc in $N$.
Then, a {\em phylogenetic} network is tree-based if and only if $(U \cup H, E)$
contains a matching \cite[Theorem 3.4]{JvI18}.
Interestingly, we found that characterizing forest-based networks
is more subtle although, as we shall now see, we can 
still characterize proper forest-based networks using omnians.

To this end, we introduce some further definitions. 
Suppose that $N$ is a network 
and that $v \in V(N)$. We define the vertex
$\gamma_v\in RH(N)=R(N) \cup H(N)$ to be
the (unique) ancestor of $v$ such that 
no vertex in $RH(N)-\{\gamma_v\}$
is contained in the directed path $P$ 
from $\gamma_v$ to $v$ (e.g. in 
Figure~\ref{fig:nonproper}(i) 
\begin{figure}[h]
	\begin{center}
		\includegraphics[scale=0.6]{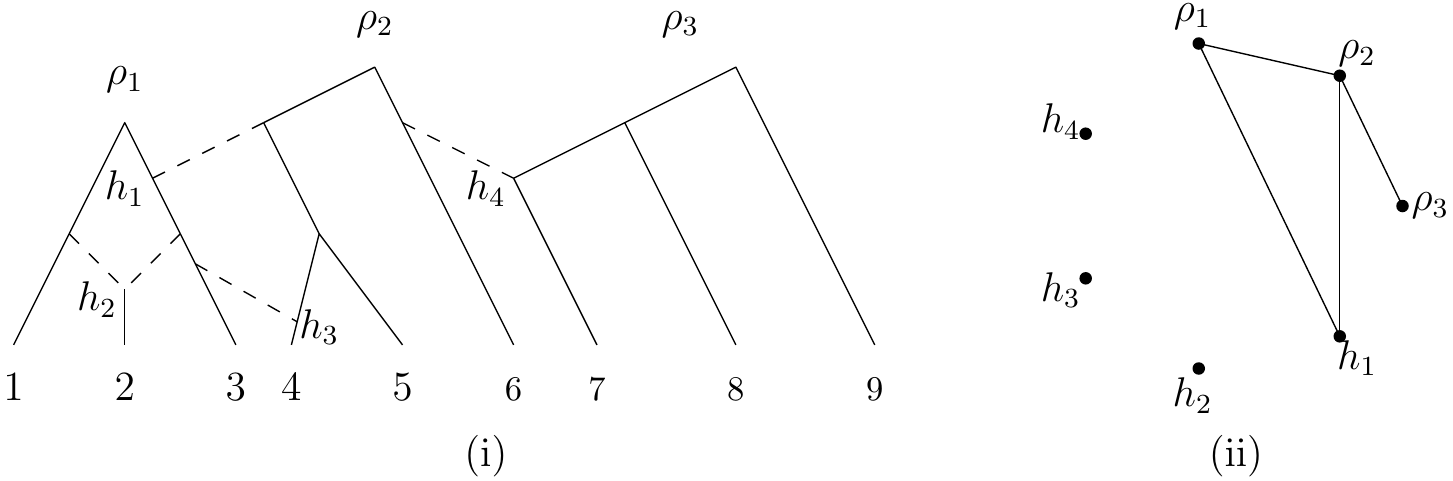}
		\caption{ (i) A 3-rooted forest-based network $N$ with $\cO(N)=\emptyset$ that is not proper forest-based. (ii)
				The graph $\Gamma(N)$.
		}
		\label{fig:nonproper}
	\end{center}
\end{figure}
for leaf 5, $\gamma_5=\rho_2$, and for leaf 3, $\gamma_3=h_1$).
Note that $\gamma_v=v$ if and only if $v \in RH(N)$. 
The rational behind the definition of $\gamma_v$  is that, 
for any base forest $F$ in a proper forest-based 
network, the vertices $v$ and $\gamma_v$ 
must belong to the same tree in $F$.

We next associate an undirected graph $\Gamma(N)$ to $N$
(which may also contain loops).
The vertex set of $\Gamma(N)$ is the set $RH(N)$, and 
(not necessarily distinct)  vertices $u, v  \in RH(N)$ form an edge 
$\{u,v\}$ in $\Gamma(N)$ if there exists a hybrid vertex $h\in H(N)$ with
parents $u'$ and $v'$ such that $u=\gamma_{u'}$ and
$v=\gamma_{v'}$ (see e.g. Figure~\ref{fig:nonproper}(ii)).
In addition, we call any (undirected) supergraph $\Gamma'(N)$ of
$\Gamma(N)$ with the same vertex set as $\Gamma(N)$
an \emph{omni-extension} of $\Gamma(N)$ if,
for any omnian $v \in \cO(N)$, there exists a
child $h$ of $v$ such that $\{\gamma_u, h\}$
is an edge of $\Gamma'(N)$ for $u$ the second parent of $h$ (see 
e.g. Figure~\ref{thetaN}(iii)).
\begin{figure}[h]
	\begin{center}
		\includegraphics[scale=0.6]{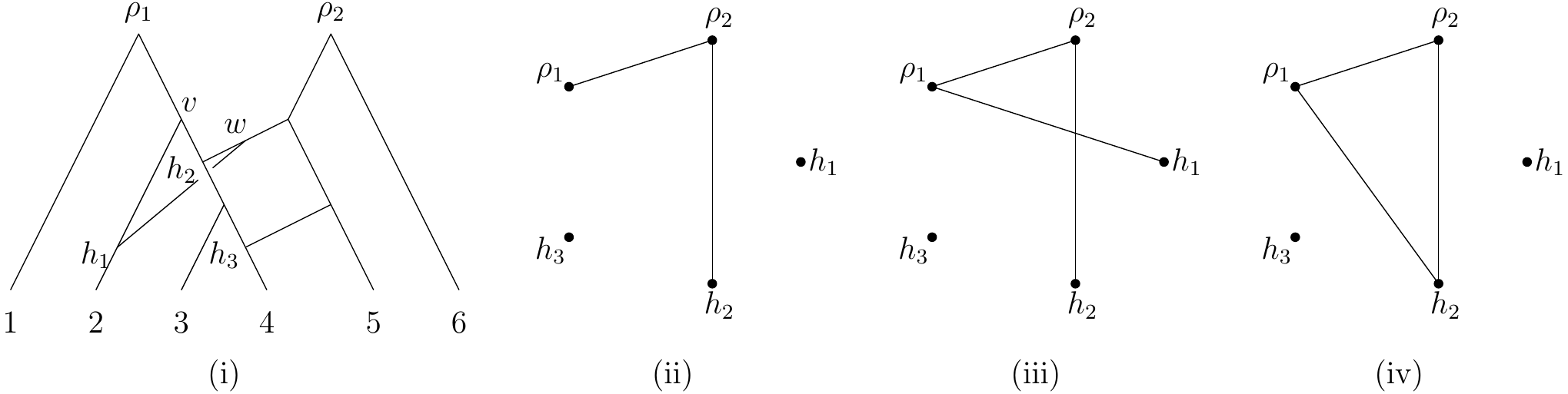}
		\caption{
			(i) A 2-rooted forest-based network $N$ with $\cO(N)=\{v,w\}$. 
			(ii) The graph $\Gamma(N)$. 
			(iii) and (iv) Two distinct omni-extensions of $\Gamma(N)$,  
			with a minimum number of possible edges. Since one of these 
			extensions has no cycle of length 3, it 
			follows by Theorem~\ref{col2} that $N$ is proper forest-based.}
		\label{thetaN}
	\end{center}
\end{figure}
Note that there exist networks $N$ on $X$ such that $\Gamma(N)$ has more than
one omni-extension (e.g. Figure~\ref{thetaN}), 
and also that if  $N$ does not contain any 
omnians, then  $\Gamma(N)$ is an omni-extension of itself (this can 
also hold even if $N$ contains omnians). 
We will use the following useful additional 
observation concerning omni-extensions to obtain our characterisation 
of proper forest-based networks.

\begin{lemma} \label{lem:new}
	Let $N$ be a $m$-rooted network on $X$, some $m\geq 2$.
	If $N$ is proper forest-based with proper base forest $F$, then $\Gamma(N)$
	has an omni-extension that does not contain loops, namely,
	the graph $\Gamma_F(N)$ having the same vertex set as $\Gamma(N)$, and with
	edge set consisting of those $\{u,v\}$, $u,v\in RH(N)$, such that  
	$u$ and $v$ belong to different trees in $F$.
\end{lemma}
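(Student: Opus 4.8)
The plan is to verify directly that the graph $\Gamma_F(N)$ is (a) a supergraph of $\Gamma(N)$ on the same vertex set, (b) loop-free, and (c) an omni-extension of $\Gamma(N)$. Throughout, $F'$ denotes an embedding of the proper base forest $F$ into $N$, and I would freely identify a component of $F$ with the set of vertices of $N$ making up the corresponding component of $F'$. The single recurring observation is the one already flagged in the text just before the lemma: for every vertex $v$ of $N$, the vertices $v$ and $\gamma_v$ lie in the same tree of $F$. To see this, note that the directed path $P$ from $\gamma_v$ to $v$ contains no vertex of $RH(N)$ other than $\gamma_v$; in particular every vertex of $P$ after $\gamma_v$ is a tree vertex of $N$, hence has indegree one, so the unique arc into it cannot be a contact arc (a contact arc has a hybrid head only if its head is a root of $F'$, but more simply: if that arc were a contact arc, the head would have two incoming arcs in $N$ and be a hybrid vertex, contradicting that it is a tree vertex). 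Wait — more carefully: a tree vertex of $N$ has indegree one, so its single incoming arc, whether or not it is a contact arc, is the only arc into it; if it were a contact arc then the vertex would be a root of the component of $F'$ containing it, and then the path $P$ from $\gamma_v$ would have had to "jump" between components, which is impossible since $P$ is a directed path in $N$ all of whose arcs except possibly the incoming arcs of $\gamma_v$... Let me restate the clean version I would actually write: since each internal vertex $v$ of $N$ has, by Theorem~\ref{paths} applied via the forest structure, at least one outgoing arc in $F'$, and since every vertex on $P$ strictly below $\gamma_v$ has indegree one in $N$, each such vertex's incoming arc must lie in $F'$ (a contact arc with a tree-vertex head would make that head have a parent in another component while still having indegree one, which is fine — so instead I argue: the incoming arc being a contact arc would force the head to be a root of $F'$; but then the vertex just above it on $P$ would have an outgoing arc in $A - A'$ as its only way to reach it, and $P$ being a directed path in $N$ passing through consecutive vertices means consecutive vertices of $P$ are joined by arcs of $N$ — some of which may be contact arcs). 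To avoid this tangle, the crispest route is: $\gamma_v$ is defined so that $P$ from $\gamma_v$ to $v$ meets $RH(N)$ only in $\gamma_v$; every other vertex of $P$ has indegree $1$ (it is a tree vertex) and outdegree $\geq 1$, so it is a subdivision vertex or a branching tree vertex of $N$, and in the forest $F'$ it must have its incoming arc in $A'$ because its unique incoming arc in $N$, if it were a contact arc, would have its head in a tree of $F'$ different from the tree containing its tail, yet the head (being a tree vertex with a single parent) has no other parent and hence the tree containing it has that head as a root of $F'$ — and roots of $F'$ that are subdivision vertices are suppressed, contradiction-free, so I simply pick the embedding and track components. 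I will write this as a short self-contained sublemma.

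Granting the sublemma, part (b) is immediate: an edge $\{u,u\}$ of $\Gamma(N)$ would arise from a hybrid vertex $h$ with parents $u',v'$ satisfying $\gamma_{u'}=\gamma_{v'}$; but in $F'$ exactly one of the two arcs $(u',h)$, $(v',h)$ lies in $A'$ (a hybrid vertex of $N$ has indegree two, and in the forest $F'$ it has indegree one), so $u'$ is in the same tree of $F$ as $h$ via its $A'$-arc, while the other parent, say $v'$, is joined to $h$ by a contact arc and hence lies in a \emph{different} tree of $F$. By the sublemma $\gamma_{u'}$ is in $u'$'s tree and $\gamma_{v'}$ is in $v'$'s tree, and these trees are distinct; so $\gamma_{u'}\neq\gamma_{v'}$, i.e. $\Gamma_F(N)$ has no loop, and moreover the same computation shows every edge $\{\gamma_{u'},\gamma_{v'}\}$ of $\Gamma(N)$ does join two vertices in different trees of $F$, giving part (a): $\Gamma(N)\subseteq\Gamma_F(N)$, with equal vertex sets by definition.

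For part (c), let $v\in\cO(N)$ be an omnian, so every child of $v$ is a hybrid vertex. Since $v$ is a non-leaf vertex of $N$, the embedding $F'$ contains an arc with tail $v$; that arc goes to some child $h$ of $v$, and $h$ is hybrid, so in $F'$ the arc $(v,h)$ lies in $A'$ and $h$'s \emph{other} incoming arc $(u,h)$, where $u$ is the second parent of $h$, is a contact arc. Then $h$ and $v$ lie in the same tree of $F$ (via the $A'$-arc $(v,h)$), while $u$ lies in a different tree; by the sublemma $\gamma_u$ lies in $u$'s tree, and also $h$ lies in $v$'s tree, so $\gamma_u$ and $h$ lie in different trees of $F$, whence $\{\gamma_u,h\}$ is an edge of $\Gamma_F(N)$. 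This is exactly the defining condition of an omni-extension. Combining (a), (b), (c), $\Gamma_F(N)$ is a loop-free omni-extension of $\Gamma(N)$.

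The main obstacle, as the rambling above indicates, is purely bookkeeping: pinning down cleanly why $v$ and $\gamma_v$ always sit in the same component of $F$ — i.e. that a maximal $RH(N)$-free directed path cannot cross a contact arc. Once that sublemma is stated and proved (a two-line induction along the path, using that interior vertices are tree vertices with indegree one and that a contact arc's head, if a tree vertex, would have to be a root of $F'$ — which it is not, being interior to $P$ with a predecessor on $P$ joined to it by an arc of $N$), everything else is a direct unwinding of the definitions of $\Gamma(N)$, omni-extension, and $\Gamma_F(N)$, together with the single fact that a hybrid vertex of $N$ retains exactly one incoming arc in any subdivision forest $F'$.
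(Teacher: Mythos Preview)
Your overall plan matches the paper's proof exactly: verify that $\Gamma_F(N)$ contains $\Gamma(N)$, is loop-free (which is immediate from its definition), and satisfies the omni-extension condition, all by reducing to the sublemma that $v$ and $\gamma_v$ lie in the same component of $F$. The paper simply asserts this sublemma (it is stated as the ``rationale'' for $\gamma_v$ just before the lemma), whereas you try to prove it, and that is where the only real gap lies.

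Your several attempts at the sublemma all stop short of the key step. You correctly note that every vertex on the path $P$ from $\gamma_v$ to $v$, other than $\gamma_v$, is a tree vertex of $N$ with indegree one, and that if its unique incoming arc were a contact arc then that vertex would be a root of $F'$. But your proposed ways of ruling this out do not work: having an incoming contact arc does not force indegree two in $N$, and having a predecessor on $P$ joined to it by an arc of $N$ says nothing about $F'$, since arcs of $N$ need not be arcs of $F'$. The missing ingredient is precisely the \emph{properness} of $F$. Since $F$ is proper, $F'$ has exactly $m=|R(N)|$ components; each root of $N$ has indegree zero and hence is a root of its component of $F'$, and no two roots of $N$ can lie in the same tree, so the roots of $F'$ are \emph{exactly} the roots of $N$. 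A tree vertex of $N$ is therefore never a root of $F'$, so its unique incoming arc must lie in $A'$; inductively, all arcs of $P$ lie in $F'$ and the sublemma follows. With this one sentence supplied, your arguments for (a), (b), and (c) go through exactly as written and coincide with the paper's.
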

\begin{proof}
	We first establish that $\Gamma(N)$ 
	is a subgraph of $\Gamma_F(N)$. 
	Suppose $e=\{u,v\}$ with $u,v\in RH(N)$ is an edge in $\Gamma(N)$.
	 Then there is a hybrid vertex
		in $N$ with parents $u'$ and $v'$ such that
		$u=\gamma_{u'}$ and $\gamma_{v'}=v$. Since 
		$N$ is based on $F$, 
	$u'$ and $v'$ must belong to two different trees in $F$. 
	Since $F$ is a proper base forest for $N$, the vertices $\gamma_{u'}$ and $\gamma_{v'}$ must
	belong to two different trees in $F$. Thus, $u\not= v$. By 
		definition of $\Gamma_F(N)$, it follows that $e$ 
		is an edge of $\Gamma_F(N)$.
		
	To show that $\Gamma_F(N)$ 
	is an omni-extension of $\Gamma(N)$, consider an omnian $v$ of $N$.
	As $N$ is based on $F$, $v$ must have at least 
	one child $h$ such   
	that $v$ and $h$ belong to the same tree $T_v$ of $F$. 
	Hence, for $u$ the parent of $h$ other
		than $v$, $u$ does not belong to $T_v$. Thus, 
	$\gamma_u$ and $h$ belong to different trees in $F$
	because $F$ is a proper base forest for $N$. Hence
		$\{\gamma_u,h\}$ is an edge of
	$\Gamma_F(N)$. 
		\end{proof}
	
We now present our characterization for proper forest-based networks.	
Recall that if $G$ is a undirected graph (possibly with loops), and 
$Y$ is a non-empty set of colors, then a map $\sigma:  V(G) \to  Y$ satisfying $\sigma(u) \neq \sigma(v)$ 
for all edges $\{u,v\}$ of $G$ is a \emph{proper vertex coloring} of $G$. 
Moreover, if there exists such a colouring with $|Y|=k \ge 1$, then $G$ is called \emph{$k$-colorable}; if
$k=2$ then $G$ is {\em bipartite}.
	
\begin{theorem}\label{theo:general}
	Let $N$ be a $m$-rooted network on $X$, some $m\geq 2$, and let
	$\{s_1,\ldots, s_m\}$ be a set of $m$ colors. Then $N$ is proper forest 
	based if and only if there exists an omni-extension $\Gamma'(N)$ of $\Gamma(N)$ 
	and a proper vertex coloring $\sigma: RH(N) \to \{s_1, \ldots, s_m\}$ of $\Gamma'(N)$ satisfying:
	\begin{itemize}
		\item[(C1)] The restriction of $\sigma$ to $R(N)$ is a bijection.
		\item[(C2)] For all $u \in R(N)$ and all $ v \in H(N)$
		such that $\sigma(u)=\sigma(v)$ there must
		exist a directed path $P$ in $N$ from $u$ to $v$ such that $\sigma(w)=\sigma(u)$ 
		holds for all vertices $w \in H(N)$ that lie on $P$.
	\end{itemize}
\end{theorem}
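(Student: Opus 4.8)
\medskip

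The plan is to prove both directions by translating between a proper base forest $F$ for $N$ and a pair consisting of an omni-extension $\Gamma'(N)$ together with a proper colouring $\sigma$ of it. For the forward direction, suppose $N$ is proper forest-based with proper base forest $F$, so $F$ has exactly $m$ trees $T_1,\dots,T_m$. By Lemma~\ref{lem:new}, the graph $\Gamma_F(N)$ is a loop-free omni-extension of $\Gamma(N)$; I take $\Gamma'(N)=\Gamma_F(N)$. Define $\sigma\colon RH(N)\to\{s_1,\dots,s_m\}$ by letting $\sigma(v)=s_j$ when $v$ lies in the tree $T_j$ (every vertex of $RH(N)$ lies in some unique tree of $F$, since $F'$ is an embedding with vertex set $V$). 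This is a proper colouring of $\Gamma_F(N)$ by the very definition of the edge set of $\Gamma_F(N)$. Condition (C1) holds because the $m$ roots of $N$ lie one in each of the $m$ trees of $F$, so $\sigma$ restricted to $R(N)$ is a bijection onto the colour set. For (C2): if $u\in R(N)$ and $v\in H(N)$ have $\sigma(u)=\sigma(v)$, then $u$ and $v$ lie in the same tree $T_j$ of $F$; since $T_j$ is a tree with root $u$ (as $u$ is a root of $N$ and lies in $T_j$), there is a directed path $P$ in the embedding $F'$ (hence in $N$) from $u$ to $v$, and every vertex of $RH(N)$ on $P$ lies in $T_j$, so receives colour $\sigma(u)$; in particular this holds for every hybrid vertex on $P$.

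\medskip

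For the converse, suppose we are given an omni-extension $\Gamma'(N)$ of $\Gamma(N)$ and a proper colouring $\sigma$ satisfying (C1) and (C2). I will build a subdivision forest $F'$ for $N$ with $m$ trees, one of each colour. The idea is to colour \emph{every} vertex of $N$: extend $\sigma$ to a map $\widehat\sigma\colon V(N)\to\{s_1,\dots,s_m\}$ by setting $\widehat\sigma(v)=\sigma(\gamma_v)$ for each $v\in V(N)$ (recall $\gamma_v\in RH(N)$ and $\gamma_v=v$ when $v\in RH(N)$, so this extends $\sigma$). Now declare an arc $(u,v)$ of $N$ to be a \emph{contact arc} precisely when $\widehat\sigma(u)\neq\widehat\sigma(v)$, and let $A'$ be the set of remaining arcs, $F'=(V,A')$. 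I must check: (a) $F'$ is a forest; (b) $L(F')=L(N)$; and (c) the two end vertices of every contact arc lie in different trees of $F'$; and finally that $F'$ has $m$ roots. For (a), note that each hybrid vertex $h$ of $N$ has two incoming arcs $(u',h)$ and $(v',h)$; since $\{u,v\}=\{\gamma_{u'},\gamma_{v'}\}$ is an edge of $\Gamma(N)\subseteq\Gamma'(N)$, the colouring forces $\sigma(\gamma_{u'})\neq\sigma(\gamma_{v'})$, hence $\widehat\sigma(u')\neq\widehat\sigma(v')$, so the two colours $\widehat\sigma(h)=\sigma(\gamma_h)=\sigma(h)$ can agree with at most one of them — at most one incoming arc of $h$ survives in $F'$. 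Every tree vertex has a unique incoming arc in $N$, which may or may not survive; in either case its indegree in $F'$ is at most one. Since $N$ is acyclic and every vertex has $F'$-indegree at most one, $F'$ is a forest. For (c): if $(u,v)$ is a contact arc with $v$ a hybrid vertex, $v$ and its one surviving parent (if any) lie in the component containing $\gamma_v$; standard arguments show the colours label the components, so distinct colours means distinct trees — more carefully, one shows that within any directed path of $N$ that stays monochromatic, the vertices lie in one tree of $F'$, and $\gamma_v$ always lies in the same tree as $v$.

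\medskip

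The genuinely delicate point, and the one I expect to be the main obstacle, is (b): showing $L(F')=L(N)$, i.e.\ that no non-leaf vertex of $N$ becomes a leaf of $F'$. A non-leaf vertex $v$ of $N$ becomes a leaf of $F'$ exactly when \emph{all} of its outgoing arcs are contact arcs, i.e.\ every child $w$ of $v$ satisfies $\widehat\sigma(w)\neq\widehat\sigma(v)$. If $v$ has a child $w$ that is a tree vertex, then $\gamma_w=\gamma_v=v$ (when $v\in RH(N)$) or $\gamma_w=\gamma_v$ more generally, so $\widehat\sigma(w)=\widehat\sigma(v)$ and $v$ survives; hence the only danger is when $v$ is an omnian. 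This is precisely where the omni-extension hypothesis is used: for an omnian $v$, the definition of omni-extension guarantees a child $h$ of $v$ with $\{\gamma_u,h\}$ an edge of $\Gamma'(N)$, where $u$ is the other parent of $h$. I then need to argue that for \emph{some} such child $h$ we have $\widehat\sigma(v)=\widehat\sigma(h)$. Here I will need to combine the proper-colouring property applied to the edge $\{\gamma_u,h\}$ (forcing $\sigma(\gamma_u)\neq\sigma(\gamma_h)$, so $h$'s colour differs from that of its other-parent branch) with condition (C2), which controls how colours propagate down monochromatic paths from roots through hybrid vertices, and with a careful analysis of $\gamma_h$ versus $\gamma_v$ — noting that $\gamma_h$ is either $h$ itself or the nearest $RH$-ancestor above $h$, which, because $v$ is a parent of $h$ and $v$'s surviving outgoing structure must reach a tree vertex, can be pinned down. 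Assembling this local argument into the global statement $L(F')=L(N)$, and simultaneously verifying that $F'$ has exactly $m$ trees (one sees $R(F')\supseteq R(N)$, which has size $m$ by hypothesis, and (C1) plus the colouring-labels-components fact gives equality), is the technical heart of the proof; everything else is bookkeeping built on Lemma~\ref{lem:new} and the definitions.
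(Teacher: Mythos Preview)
Your plan is correct and follows the same approach as the paper: use Lemma~\ref{lem:new} for the forward direction, and for the converse extend $\sigma$ to all of $V(N)$ via $\widehat\sigma(v)=\sigma(\gamma_v)$ and take the monochromatic pieces as the trees of the subdivision forest (the paper phrases this as letting $T_i$ be the subgraph induced on $\{v:\sigma(\gamma_v)=s_i\}$, which is the same construction). You correctly identify that the leaf condition for omnians needs both the omni-extension edge and (C2) --- indeed (C2) forces $\sigma(h)\in\{\sigma(\gamma_{u'}),\sigma(\gamma_{v'})\}$ for any hybrid $h$ with parents $u',v'$, and the omni-extension edge then rules out $\sigma(\gamma_u)$ --- and note as a small correction that a tree vertex's incoming arc always survives (its parent has the same $\gamma$), so together with the preceding observation you get $R(F')=R(N)$ directly rather than via the ``colouring-labels-components'' count.
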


\begin{proof}
	Assume first that $N$ is proper
	forest-based with proper base forest $F$. Let $\Gamma_F(N)$ 
	be the omni-extension of $\Gamma(N)$ given in  Lemma~\ref{lem:new}.
	Let $\sigma_F: RH(N) \to R(N)$ be 
	the map that assigns to every vertex $v \in RH(N)$ the unique root $\rho$ of $N$ such that 
	the directed path from $\rho$ to $v$ does not contain a contact arc of $N$. 
	Note that such a path may consist of a single vertex. 
	Since $N$ is based on $F$, it follows that $\sigma_F$ is 
	well-defined and a proper vertex coloring of $\Gamma_F(N)$. By definition,
	$\sigma_F$ satisfies Properties~(C1) and (C2).
	
	Conversely, let $\Gamma'(N)$ be an omni-extension of $\Gamma(N)$,
	let $S=\{s_1, \ldots, s_m\}$ denote a set of $m$ colors, 
	and let $\sigma: RH(N) \to S$ be a proper vertex 
	coloring of $\Gamma'(N)$ that satisfies 
	Properties~(C1) and (C2). For $1 \leq i \leq m$, let $T_i$ denote the 
	subgraph of $N$ induced on the set $V'$ of vertices $v$ in $N$ with $\sigma(\gamma_v)=s_i$
	(that is, the graph with vertex set $V'$ and arc set $\{(u,v) \in A(G): u, v \in V'\}$).
	
	Suppose $i\in\{1,\ldots, m\}$.
	We claim that $T_i$ is a subdivision of a phylogenetic tree $T'_i$ on 
	some subset of $X$. By symmetry, we may assume without 
	loss of generality that $i=1$. Since $N$ has $m$ roots and 
	since, by Property~(C1), no two roots of $N$ are assigned the 
	same color under $\sigma$, it follows that $T_1$ contains 
	exactly one root of $N$. Moreover, and as a direct consequence of Property~(C2), we have that $T_1$ is connected.
		
	To see that $T_1$ is a tree, it suffices to show that $T_1$ does not 
	contain a hybrid vertex of $N$ and both its parents. Assume for contradiction that $T_1$ contains 
	a hybrid vertex $h\in H(N)$ and its parents $u$ and $v$. By 
	definition of $\Gamma(N)$, $\{\gamma_u, \gamma_v\}$ is an edge of $\Gamma(N)$.
	Since $\sigma$ is a proper vertex coloring of $\Gamma'(N)$
	it follows that $\sigma(\gamma_u) \neq \sigma(\gamma_v)$. 
	This is a contradiction since $u,v\in V'$ and, therefore,
	$\sigma(\gamma_u)= s_1=\sigma(\gamma_v)$. 
    Thus, $T_1$ must be a tree, as required. 
	
	Since, $\Gamma'(N)$ is an omni-extension 
	of $\Gamma(N)$, the definition of $\sigma$ ensures
	that  $L(T_1) \subseteq X$. It follows that $T_1$ is a subdivision of 
	a phylogenetic tree $T'_1$ on a subset of $X$, as claimed.
	
	Now let $F=\{T_1, \ldots, T_m\}$. Then,
	by construction,  we have  
	$L(T_i)\not=L(T_j)$, for all $1\leq i<j\leq m$.
	In view of our claim, every tree in $F$ is a subdivision of a 
	phylogenetic tree in the forest $F'=\{T'_1, \ldots, T'_m\}$ and 
	$\bigcup_{T\in F'} L(T)=X$. Moreover, for all $i \in \{1, \ldots, m\}$, 
	an arc $(u,v)$ of $N$ with $u,v \in V(T_i)$ is also an arc of $T_i$. It 
	follows that $N$ is obtained from $F$ by adding arcs joining 
	vertices from distinct trees of $F$.
	Thus, $N$ is forest-based. That $N$ is proper
	forest-based is a direct consequence of the construction of $F$ from $N$.
\end{proof}

Interestingly, Theorem~\ref{theo:general} can be strengthened in case $m=2$ as follows. 

\begin{theorem}\label{col2}
	Let $N$ be a 2-rooted network on $X$. Then $N$ is proper forest-based
	if and only if $\Gamma(N)$ has a bipartite omni-extension.
\end{theorem}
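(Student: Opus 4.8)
The plan is to specialise Theorem~\ref{theo:general} to the case $m=2$ and show that, for two colours, Conditions~(C1) and~(C2) are automatically available once a proper $2$-colouring of some omni-extension exists. First I would prove the easy direction: if $N$ is proper forest-based with proper base forest $F$, then by Lemma~\ref{lem:new} the graph $\Gamma_F(N)$ is a loopless omni-extension of $\Gamma(N)$, and the colouring $\sigma_F$ of Theorem~\ref{theo:general} uses only $m=2$ colours, so $\Gamma_F(N)$ is bipartite; hence $\Gamma(N)$ has a bipartite omni-extension. For the converse, suppose $\Gamma'(N)$ is a bipartite omni-extension of $\Gamma(N)$ with proper $2$-colouring $\sigma:RH(N)\to\{s_1,s_2\}$. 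By Theorem~\ref{theo:general} it suffices to produce an omni-extension and a proper $2$-colouring satisfying (C1) and (C2); the point is that bipartiteness forces these.

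The key observations are these. Since $N$ is $2$-rooted, $R(N)=\{\rho_1,\rho_2\}$; because $\Gamma'(N)$ contains no loops and $\sigma$ is proper, each $\rho_i$ receives a well-defined colour. If the two roots happen to receive the same colour, I would swap the colour of every vertex in the connected component of $\Gamma'(N)$ containing $\rho_2$ — this is still a proper colouring because $\Gamma'(N)$ is bipartite — to arrange that $\sigma(\rho_1)\neq\sigma(\rho_2)$, giving (C1). This recolouring is legitimate because $\sigma(\gamma_v)$ only changes by a global swap within each component, which preserves all edge constraints; and it does not destroy the omni-extension property, which is a purely structural condition on $\Gamma'(N)$ independent of the colouring. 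For (C2), I would argue that it holds vacuously in a strong sense: the condition concerns a root $u$ and hybrid vertex $v$ with $\sigma(u)=\sigma(v)$, and one must exhibit a directed path from $u$ to $v$ all of whose interior hybrid vertices share that colour. Here I would use that every vertex $w\in RH(N)$ lies on a directed path from exactly the root whose colour it can be forced to share — more precisely, I would define the colouring via $\sigma(\gamma_v)$ following the ancestral chain of $\gamma$-vertices from a root, exactly mimicking $\sigma_F$ in the proof of Theorem~\ref{theo:general}, and check that the bipartite constraint on $\Gamma'(N)$ guarantees consistency along such chains.

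The cleanest route, and the one I would actually write, is to avoid re-deriving (C2) from scratch and instead run the construction in the converse part of the proof of Theorem~\ref{theo:general} verbatim with $m=2$, feeding it the given bipartite omni-extension and its $2$-colouring (after the component-swap to secure (C1)). That construction builds subgraphs $T_1,T_2$ induced on $\{v:\sigma(\gamma_v)=s_i\}$; I would verify that with two colours each $T_i$ is connected — this is where (C2) enters — by showing directly that for any vertex $v$ with $\sigma(\gamma_v)=s_i$, the directed path from $\gamma_v$ back toward the unique root of colour $s_i$ stays within colour $s_i$: at each step we move from a vertex in $RH(N)$ to its $\gamma$-predecessor, and a colour change would manifest as an edge of $\Gamma(N)\subseteq\Gamma'(N)$ between two different-coloured vertices, which is allowed, so the subtlety is that we must instead track that \emph{some} child leading back up keeps the colour. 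I expect the main obstacle to be precisely this connectivity check for $T_i$: translating ``bipartite omni-extension'' into the path-existence statement (C2) requires noticing that with only two colours, a hybrid vertex $h$ coloured $s_i$ has at least one parent $u'$ with $\gamma_{u'}$ coloured $s_i$ (since its two parents' $\gamma$-images cannot both be coloured $s_j$ without violating... actually they can, so one must use the omni-extension edge at the relevant omnian), and chaining these yields the required monochromatic directed path. Once connectivity of each $T_i$ is established, the remainder — that $T_1,T_2$ are trees (no hybrid vertex with both parents inside, by properness of the colouring on $\Gamma(N)$), that their leaf sets are disjoint subsets of $X$ covering $X$, and that $N$ is recovered by adding cross-tree arcs — is identical to Theorem~\ref{theo:general} and I would simply cite it.
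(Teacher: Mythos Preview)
Your overall strategy---reduce to Theorem~\ref{theo:general} by verifying that any proper $2$-colouring of a bipartite omni-extension already satisfies (C1) and (C2)---is exactly the paper's route. But both verifications have genuine gaps.

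\textbf{For (C1).} The component-swap does not work: if $\rho_1$ and $\rho_2$ lie in the \emph{same} connected component of $\Gamma'(N)$ and happen to share a colour, swapping that component flips both and changes nothing. What the paper does instead is observe that $\{\rho_1,\rho_2\}$ is already an edge of $\Gamma(N)$, so every proper colouring separates the roots and no swap is ever needed. The reason this edge exists (under the standing hypothesis that $\Gamma'(N)$, hence $\Gamma(N)$, is loopless): pick a hybrid vertex $h$ that has no other hybrid vertex among its ancestors; then for each parent $u'$ of $h$ the vertex $\gamma_{u'}$ is a root, and looplessness forces $\gamma_{u'}\neq\gamma_{v'}$, so $\{\gamma_{u'},\gamma_{v'}\}=\{\rho_1,\rho_2\}$. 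You never establish that your ``if'' is vacuous, and the remedy you propose for it would fail precisely when it is not.

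\textbf{For (C2).} You state the right inductive claim---a hybrid $h$ coloured $s_i$ has a parent $u'$ with $\sigma(\gamma_{u'})=s_i$---and then talk yourself out of it (``actually they can'') and reach for omni-extension edges. Your first instinct was correct and the retraction is the error: by the very definition of $\Gamma(N)$, for \emph{any} hybrid $h$ with parents $u',v'$ the pair $\{\gamma_{u'},\gamma_{v'}\}$ is an edge of $\Gamma(N)\subseteq\Gamma'(N)$, so $\sigma(\gamma_{u'})\neq\sigma(\gamma_{v'})$; with two colours exactly one of them equals $\sigma(h)$. Following that parent, then its $\gamma$-ancestor, and iterating strictly upward yields a directed path to a root of colour $s_i$, which by (C1) is the correct root. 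The omni-extension edges play no role in (C2); they are only needed in the converse of Theorem~\ref{theo:general} to guarantee $L(T_i)\subseteq X$, which you correctly propose to cite verbatim. So the missing ingredient in both places is the same simple observation: the edges of $\Gamma(N)$ itself already force the parents' $\gamma$-images apart.
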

\begin{proof}
Suppose that $N$ is proper forest-based $2$-network 
with proper base forest $F=\{T_1,T_2\}$. Then, by Theorem~\ref{theo:general}, there 
exists an omni-extension $\Gamma'(N)$ that is 2-colorable. 
	 				
Conversely, suppose that there exists 
an omni-extension $\Gamma'(N)$ of $\Gamma(N)$ that is 2-colorable.
Then there exists a proper vertex colouring $\sigma:RH(N)\to \{s_1,s_2\}$, with $s_1 \neq s_2$.  
In view of Theorem~\ref{theo:general}, it suffices to show 
that $\sigma$ satisfies Properties~(C1) and (C2).  

Since $N$ is connected, there must exist some hybrid vertex 
$h\in H(N)$ with parents $u',v'$ satisfying $\gamma_{u'}=\rho_1$ and $\gamma_{v'}=\rho_2$.
So $\{\rho_1,\rho_2\}$ is an edge in $\Gamma(N)$, and  
therefore $\sigma(\rho_1)\not=\sigma(\rho_2)$ since $\sigma$ is a proper vertex 
colouring of $\Gamma'(N)$. Thus Property~(C1) holds.

To see that Property~(C2) holds,  consider the
map $\psi=\psi_{\sigma}: V(N) \to \{\rho_1, \rho_2\}$ 
associated to $\sigma$ given by putting, for all
$v\in V(N)$, 
%$\psi(v)=\sigma(v)$ if $v\in RH(N)$ and 
$\psi(v)=\sigma(\gamma_v)$.
% otherwise.
Assume for contradiction that (C2) does not hold.
Then there must exist some $i\in\{1,2\}$, say
$i=1$, and some vertex $g\in H(N)$ with $\sigma(\rho_1)=\sigma(g)$
such that every directed path from $\rho_1$ to $g$  in $N$ contains a vertex $r'\in H(N)$ 
for which $\sigma(r')\not=\sigma(\rho_1)$. 
Let $P$ denote a directed path from $\rho_1$ to $g$.
Without loss of
generality, we may assume that $r'\in H(N)$ is a vertex on $P$ 
such that, for every
vertex $w\in V(N)$ on $P$ strictly above $r'$, we have
$\psi(w)=\sigma(\rho_1)$. Furthermore, we may assume without
loss of generality that $g$ is such that, for every 
$z\in V(N)$ on $P$ that is strictly above $g$ but below $r'$, we have
$\psi(z)=\sigma(\rho_2)$. 

Let $r\in V(N)$ denote the parent 
of $g$ on $P$ and let $g'\in V(N)$ 
denote the parent of $r'$ on $P$.
Let $q\in V(N)$ denote the other parent of $g$. 
Then, by definition of $g$, it follows that
%the set $\{\gamma_{r},\gamma_q\}$ is 
%{\bf FIX: supported by} $g$. Hence, 
$\{\gamma_{r},\gamma_q\}$ must be an edge in $\Gamma'(N)$. 
Since, by assumption, $\Gamma'(N)$ does not contain a cycle of  
length one (as otherwise $\Gamma'(N)$ would not be 2-colorable), 
it follows that  $\gamma_{r}\not=\gamma_q$.
Hence, $\sigma(\rho_2)=\psi(r)=\sigma(\gamma_{r})\not = \sigma(\gamma_q)$, and
so $\sigma(\gamma_q)=\sigma(\rho_1)$
because $\sigma$ is a 2-colouring. If 
$\gamma_q$ is a vertex on $P$ above $g'$, 
we obtain a contradiction, since the definition of $g$ implies that
we have found a directed path $P'$
from $\rho_1$ to $g$ in $N$ 
such that $\sigma(w)=\sigma(\rho_1)$ for all
vertices $w\in H(N)$ contained in $P'$.
By the choice of $g$, it follows that, $\gamma_q$ does not lie on $P$. 
Similar arguments as in the case of $g$, 
$r$, and $q$ imply that for one of the parents of $\gamma_q$
in $N$, $z$ say, we also have  $\sigma(\gamma_z)=\sigma(\rho_1)$.
Repeating this argument,  
since $V(N)$ is finite, we eventually obtain a  directed path $P^*$ from $\rho_1$ to $g$ in $N$ such that
$\sigma(\rho_1)=\sigma(h)$ holds for  every hybrid vertex $h$ on $P^*$,  a contradiction. Thus, Property~(C2) must hold.
\end{proof}

Note that the 
network in Figure~\ref{fig:nonbinary}(i) shows that the assumption that
the network in Theorem~\ref{col2} is semi-binary is necessary
(since, extending relevant definitions for semi-binary networks in the obvious way 
to general networks $N$ in which not every hybrid vertex must have indegree two,
every omni-extension of $\Gamma(N)$  is a supergraph of $\Gamma(N)$, and 
$\Gamma(N)$ contains a cycle of length three).
Also, the network $N$ depicted in Figure~\ref{fig:nonproper}(i) shows
that Theorem~\ref{col2} need not hold for $m$-rooted networks with $m \geq 3$,
since $\Gamma(N)$ is an omni-extension of itself because $\frak{O}(N)=\emptyset$, and $\Gamma(N)$ is not bipartite.

\begin{figure}[h]
	\begin{center}
		\includegraphics[scale=0.7]{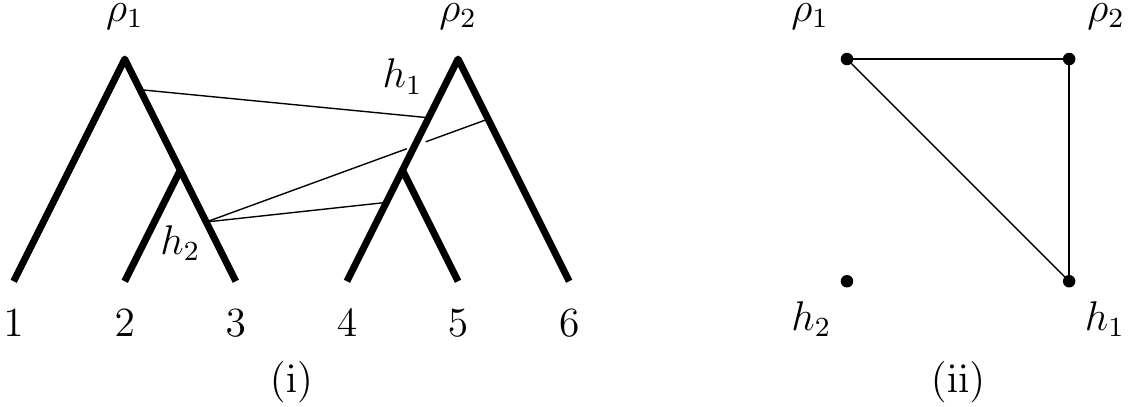}
		\caption{(i) A directed, acyclic graph $N$ with two roots
			that is based on the forest indicated in
			bold edges, and (ii) the graph $\Gamma(N)$. 
			Note that $N$ is not semi-binary as $indeg_N(h_2)=3$.}
		\label{fig:nonbinary}
	\end{center}
\end{figure}

\section{Universal forest-based networks}
\label{sec:universal}

It has been shown in \cite{H16} and \cite{Z16} that 
there exist tree-based, binary phylogenetic networks $N$ on $X$ such 
every possible binary phylogenetic tree on $X$ is a base-tree for $N$.
Such binary networks are called \emph{universal tree-based} networks.
It is thus of interest to understand if there are binary universal forest-based networks
(i.e. binary networks $N$ such that every phylogenetic forest on $X$ is a base forest for $N$).
In case $|X| \le 3$ there always exists such a network
(see Figure~\ref{ufb3} for $|X|=3$).

\begin{figure}[h]
	\begin{center}
		\includegraphics[scale=0.6]{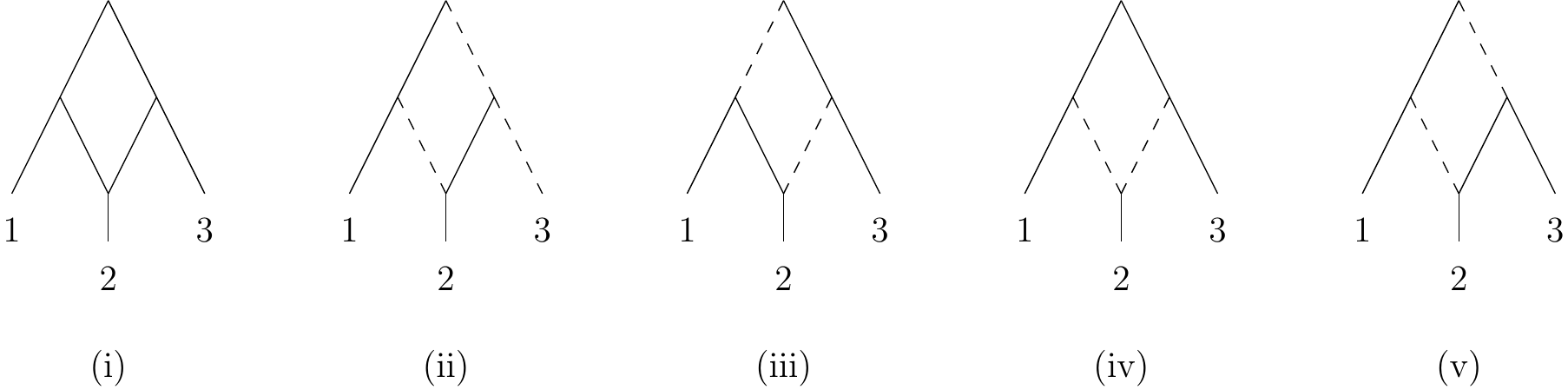}
		\caption{(i) A universal forest-based network on 
			 $X=\{1,2,3\}$. 
			 (ii) -- (v) 
			Embeddings of the four phylogenetic forests on $X$ into the network in (i). In all cases, 
			the dashed arcs represent contact arcs.}
		\label{ufb3}
	\end{center}
\end{figure}

However, we now prove the following:

\begin{theorem}\label{no-universal}
	For all $X$ with $|X|\ge 4$, there does not exist a universal forest-based network on $X$.
\end{theorem}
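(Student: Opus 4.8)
The plan is to suppose, for a contradiction, that a binary universal forest-based network $N$ on $X$ with $n=|X|\ge 4$ exists, and to show that requiring \emph{every} phylogenetic forest on $X$ to be a base forest over-constrains the local structure of $N$. Two easy preliminary observations set the stage: by the Corollary to Theorem~\ref{paths} each root of $N$ lies in a distinct tree of any base forest, so since $n\ge 4$ forces the existence of $2$-component forests we get $|R(N)|\le 2$; and for any base forest with $k$ components the number of contact arcs in a corresponding embedding equals $|H(N)|-|R(N)|+k$ (a short count from the degree sequence of a binary network), which will control how many ``subdivision vertices'' the components of an embedding can absorb.

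The core of the argument is a rigidity principle. Fix a leaf $x\in X$ and consider, over all phylogenetic trees $T$ on $X\setminus\{x\}$ (there are at least three, as $|X\setminus\{x\}|\ge 3$), the $2$-component forests $\{x\}\mid T$. In any embedding of $\{x\}\mid T$, the component carrying $x$ is a directed path $P$ ending at $x$, and, since no contact arc may join two vertices lying in the same tree of the embedding, the induced subgraph $N[V(N)\setminus V(P)]$ is \emph{exactly} a subdivision of $T$. Using Theorem~\ref{paths}, the fact (from Section~\ref{sec:charaterize}) that $x$ and $\gamma_x$ lie in the same tree of a proper base forest, and the observation that a binary network has no vertex of in- and out-degree both one, one sees that $P$ is forced to run from $\gamma_x$ down to $x$. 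In particular, when $\gamma_x$ is a root of $N$ this path $P$ is literally unique, so $N[V(N)\setminus V(P)]$ is one fixed graph, which can suppress to only a single tree --- contradicting that all $\ge 3$ trees $T$ on $X\setminus\{x\}$ must occur. Hence $\gamma_x\in H(N)$ for every leaf $x$, and more generally the ancestor path of every leaf is tightly controlled.

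Turning this into a full contradiction is where I expect the real work to lie, since when $\gamma_x$ is a hybrid vertex the path $P$ need no longer be unique and the embeddings regain flexibility. To close the argument I would combine the above rigidity for families of forests around two or three distinct leaves with the corresponding $3$-component forests $\{x\}\mid\{y\}\mid T$ and the ``matching'' forests (a cherry on $\{x,y\}$ together with a phylogenetic tree on the rest), using the contact-arc count $|H(N)|-|R(N)|+k$ to bound the number of subdivision vertices the relevant cherry-components may contain; the aim is to force two incompatible local pictures at a common leaf (for instance its parent being simultaneously required to have two different pairs of children). The cases $|R(N)|=2$ and $|R(N)|=1$ would be handled separately, with Theorem~\ref{col2} and the $\gamma_x$-machinery most directly available in the former. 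As a fallback I would attempt an inductive reduction to $|X|=4$ by leaf deletion: extending an arbitrary forest on $X\setminus\{x\}$ by the singleton $\{x\}$ and restricting a suitable embedding should show that deleting a leaf sends a universal forest-based network on $X$ to one on $X\setminus\{x\}$, provided one checks this operation preserves binarity and connectedness.
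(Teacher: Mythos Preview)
Your proposal is not a proof but a plan, and you say so yourself: after establishing that $\gamma_x\in H(N)$ for every leaf $x$ you write that ``turning this into a full contradiction is where I expect the real work to lie'' and then list several possible directions (three-component forests, cherry forests, the contact-arc count, an inductive fallback) without carrying any of them through. The partial step is correct --- if $\gamma_x$ were a root, the component $P$ of $x$ in any embedding of $\{x\}\mid T$ would indeed be the fixed path from $\gamma_x$ to $x$, so $N[V(N)\setminus V(P)]$ would be one fixed graph suppressing to a single tree on $X\setminus\{x\}$, contradicting universality. But knowing only that $\gamma_x\in H(N)$ for all $x$ does not obviously produce a contradiction, and none of the continuations you sketch is concrete enough to evaluate. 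The inductive fallback also hides real difficulties: deleting a leaf from a binary network and cleaning up need not yield a binary network, and it is not clear that an embedding of $F'\cup\{x\}$ in $N$ restricts to an embedding of $F'$ in whatever network you obtain.

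The paper's argument is short and rests on an observation \emph{dual} to yours. You trace the path $P$ upward from $x$, using that leaves and tree vertices have a unique incoming arc. The paper instead traces downward, using that in a semi-binary network every hybrid vertex has a unique \emph{outgoing} arc, which is therefore forced into every subdivision forest. Concretely: pick a root or tree vertex $w$ that is lowest in the sense that all non-leaf vertices strictly below $w$ are hybrid (such $w$ exists by descending from any root). A short lemma (embed a base forest in which two given leaves lie in the same tree but are not siblings) shows that at most one leaf $x$ lies below $w$. Now embed $\{x\}\mid T$: both children $u,v$ of $w$ are hybrid with $x$ as their only leaf descendant, and since the outgoing arc of every hybrid is forced, the full directed paths from $u$ to $x$ and from $v$ to $x$ lie in the component $T'_x$ of $x$. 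These two paths first meet at some hybrid vertex both of whose parents then lie in $T'_x$, contradicting that $T'_x$ is a directed path. This finishes the proof in a few lines and uses neither the bound $|R(N)|\le 2$ nor the contact-arc count.
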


To prove this theorem we begin with a useful observation.

\begin{lemma}\label{helpful}
	Suppose that $U$ is a universal forest-based network on $X$, $|X| \ge 4$.
	Then, for $x,y \in X$ distinct, and all $p,q \ge 0$, $U$ does not contain the configuration pictured in 
	Figure~\ref{badconfig}, where $v_0=x$ and $q_0=y$.
\end{lemma}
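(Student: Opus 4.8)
The plan is to argue by contradiction: suppose $U$ is universal forest-based on $X$ with $|X|\ge 4$ and suppose $U$ contains the forbidden configuration (with $v_0=x$, $q_0=y$ and the two pendant "ladders" of lengths $p$ and $q$). The intuition is that this configuration forces $x$ and $y$ to be "entangled" in every embedding in a way that rules out at least one phylogenetic forest on $X$ — most naturally the forest whose trees separate $x$ from $y$ in an incompatible way, e.g. a tree containing $\{x,z\}$ for some third leaf $z$ together with a tree containing $y$, or (when combined with a fourth leaf $w$) a forest realising a quartet-type split that the configuration cannot accommodate. So the first thing I would do is pin down exactly which phylogenetic forest $F$ on $X$ I claim cannot be a base forest of $U$; since $|X|\ge 4$ there is enough room to pick $F$ with two components, one containing $x$ and one containing $y$, chosen so that the cluster/tree structure forced on $x$ and $y$ by the configuration is incompatible with $F$.

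Next I would invoke Theorem~\ref{paths}: $U$ is forest-based iff the trivial forest embeds in $U$ as a union of paths with no chord, and more to the point any base forest $F$ embeds via a subdivision forest $F'$. So "every phylogenetic forest is a base forest" gives, for each candidate $F$, a set $I$ of contact arcs with $F'=(V,A-I)$ a forest whose components (after suppression) are exactly the trees of $F$, every arc of $I$ joining distinct components, and every non-leaf vertex of $U$ retaining an outgoing non-contact arc (Lemma~1). The core of the argument is then a local/combinatorial analysis of the configuration in Figure~\ref{badconfig}: I would trace which of its arcs must be contact arcs in such an embedding. At the branching/hybrid vertices inside the configuration, the constraint "every non-leaf vertex keeps a tree arc" together with "$x$ and $y$ lie in prescribed different (or prescribed same) components of $F'$" pushes certain arcs of the ladder to be contact arcs; following this forces either a vertex of the ladder to become a leaf of $F'$ (contradicting $L(F')=L(N)$), or forces a contact arc joining two vertices of the same component (contradicting the definition of subdivision forest), or forces $x$ and $y$ into the same component when $F$ demanded they be separated (or vice versa). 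That case analysis — handled uniformly in the parameters $p,q\ge0$, including the degenerate cases $p=0$ or $q=0$ — is where the real work lies.

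The main obstacle I anticipate is exactly this parameter-uniform case analysis: the configuration has two ladders of arbitrary length meeting at the vertices carrying $x$ and $y$, and I expect I will need an induction (or a "first place something goes wrong" minimality argument) on $p+q$ rather than a finite check, being careful that shrinking a ladder does not destroy the relevant hybrid structure. A secondary subtlety is making sure the chosen bad forest $F$ genuinely uses the hypothesis $|X|\ge 4$: with only three leaves (cf. Figure~\ref{ufb3}) a universal forest-based network does exist, so whatever invariant I extract from the configuration must only become an obstruction once a fourth leaf is available to be placed "on the wrong side". Once the local analysis shows no valid subdivision forest for $F$ exists, we contradict universality of $U$, and the lemma follows. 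I would then remark that this lemma is the engine for Theorem~\ref{no-universal}: any putative universal network on $|X|\ge 4$ leaves must, by a counting or pigeonhole argument on its reticulations, contain a copy of the forbidden configuration, which is impossible.
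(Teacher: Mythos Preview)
Your proposal has a genuine gap: you have the wrong candidate forest. Your instinct is to pick a forest $F$ that places $x$ and $y$ in \emph{different} components, but the configuration in Figure~\ref{badconfig} accommodates that easily. Indeed, since each $v_i$ ($1\le i\le p$) is a hybrid vertex with unique child $v_{i-1}$, the arc $(v_i,v_{i-1})$ must lie in any subdivision forest $F'$; likewise for the $u_j$'s. Hence $v_p$ lies in the component of $x$ and $u_q$ in the component of $y$. If these components are distinct, then $w$ simply keeps one of its two outgoing arcs in $F'$ and the other becomes a contact arc between two different trees --- no contradiction arises. So the case analysis you anticipate never gets off the ground with that choice of $F$.

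The paper's key idea is the opposite: take $F$ so that $x$ and $y$ lie in the \emph{same} tree $T$ of $F$ but are \emph{not siblings} there (possible because $|X|\ge 4$: let $T$ have at least three leaves and put the remaining leaves in a second component). Then the ladder argument above forces $v_p$ and $u_q$ both into the embedded tree $T'$. Since $w$ is not a leaf, one of $(w,v_p),(w,u_q)$ lies in $F'$, so $w\in T'$; but now the other arc has both endpoints in $T'$ and hence cannot be a contact arc, so it too lies in $T'$. After suppressing the degree-two ladder vertices, $x$ and $y$ become children of $w$ in $T$, contradicting the choice of $F$. No induction on $p+q$ is needed; the argument is uniform.
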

\begin{proof}
Since $U$ is universal forest-based and $|X|\geq 4$ there must exists a base forest $F$ for $U$ that 
has a component $T$ which has two leaves $x,y \in X$ so that $x$ and $y$ are
not contained in two arcs in $T$ that have a common tail. In particular, $T$ has at least 3 leaves. 
Let $F'$ be some embedding of $F$ in $U$ and let $T'$ be the corresponding 
embedding of $T$, which exists as $U$ is universal.

Put $x=v_0$ and $y=q_0$. Let $w$ denote a tree vertex or a root of $U$ and, 
for all $1\leq i\leq p$ and all $1\leq j\leq q$, let $v_i$, and $u_j$ denote hybrid 
vertices of $U$ such that $v_i$ is the parent of $v_{i-1}$ and $u_j$ is the
parent of $u_{j-1}$ and $w$ is the parent of $v_p$ and of $u_q$ (see the
configuration depicted in Figure~\ref{badconfig}).  
Then since all $v_i$, $1\leq i\leq p$, and all $u_j$, $1\leq j\leq q$, are hybrid vertices of $U$ it follows
that $x=v_0,\dots,v_p, y=u_0,\dots, u_q$ 
must all be contained in $T'$. But then at least one
of $(w,v_p)$ and $(w,u_q)$ must be an arc in $T'$, otherwise $w$
would be a leaf of some component in $F'$ and so $L(F)\not=L(U)$. 
Since $w$ is a tree vertex or a root of $U$ this
implies that $w,v_p,u_q$ are vertices in $T'$. Thus, both arcs
$(w,v_p)$ and $(w,u_q)$ must be arcs in $T'$. But this implies 
that $x$ and $y$ are	contained 
in two arcs in $T$ that have a common tail, a contradiction.
\end{proof}

\begin{figure}[h]
	\begin{center}
	 \includegraphics[scale=0.7]{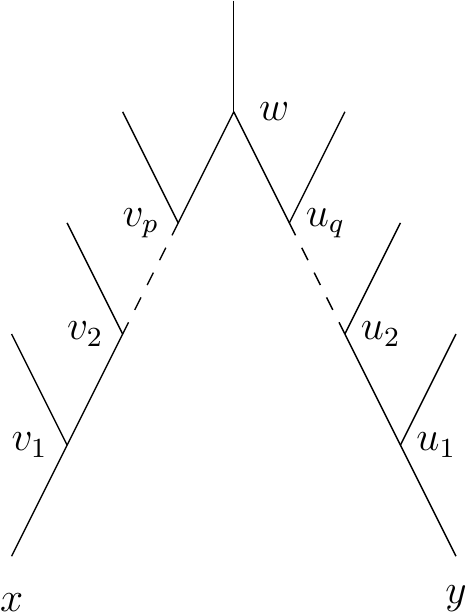}
		\caption{Forbidden configuration in a universal network. Note that $w$ can also be a root vertex.}
		\label{badconfig}
	\end{center}
\end{figure}

\noindent{\em Proof of Theorem~\ref{no-universal}:}
Assume for contradiction that there exists a universal forest-based network $U$ on $X$. 
Let $w$ be a root or tree vertex of $U$ such that all non-leaf vertices 
below $w$ are contained in $H(U)$. 
Note that this configuration must exist since $|X|\geq 4$, and so 
there are at least two base forests on $X$. Let $u$ and $v$ be the
children of $w$. By Lemma~\ref{helpful}, there exists a unique leaf $x \in X$ of $U$ such that $x$ is a
descendant of $w$. In particular, $x$ is a descendant of both $u$ and $v$. Now let $F$ be
a forest with  two components, one of which is the tree $T_x$ whose sole 
vertex is $x$ and the other which is the phylogenetic tree $T$ where
$T$ has leaf-set $X-\{x\}$. Let $F'$ be an embedding of $F$ in $U$ and
$T'_x$ be the corresponding embedding of $T_x$ into $U$ (which exists
as $U$ is universal). Note that $T'_x$ is a directed path ending at $x$.

Since the directed paths from $u$ to $x$ and from $v$ to $x$ only contain hybrid vertices, 
$T_x'$ must contain both of these paths. But this is a contradiction, since the union of
these paths must contain a hybrid vertex and both its parents. \qed

\section{Conclusion}
\label{sec:conclusion}

In this paper we have introduced the concept of 
forest-based networks and investigated some of
their fundamental properties. We conclude by 
indicating some possible future directions of research for
forest-based networks.

In Section~\ref{sec:relationship} we studied the relationship between
forest-based networks and other classes of networks. It
could be interesting to investigate these relationships in more detail.
For example,  it is know that binary tree-child phylogenetic networks 
are precisely the tree-based networks such that
every embedded phylogenetic tree is a base tree \cite{semple2016phylogenetic} -- does
a similar result hold for forest-based networks?
In addition, in this paper we only considered properties of 
semi-binary networks. Which of our definitions and 
results extend to non-binary networks (i.e. networks that are not 
necessarily semi-binary)?  Note that in \cite{JvI18} properties of non-binary 
tree-based networks were considered, which might 
provide some useful leads to studying this question.

There are also several open algorithmic questions that could
be investigated. For example, there are efficient algorithms for deciding 
whether a given phylogenetic network is tree-based 
or not, and if so to find a base-tree \cite{FS15, JvI18}.
Is there an efficient algorithms for deciding 
whether a given phylogenetic network is forest-based or not?
In this regards, Theorem~\ref{theo:general} might
be useful as it could provide a useful link with 
coloring problems.  It is also known to be NP-complete to decide 
whether or not a binary phylogenetic network is based
on a given binary phylogenetic tree -- does a similar result
hold for forest-based networks?

Finally, it could be interesting to study 
related classes of networks. For example,
{\em pedigrees} \cite{steel2006reconstructing}
are closely related to multiply rooted networks,
and it is known that the two subgraphs of a pedigree induced 
by the bipartition of the pedigree into its male and female individuals 
are both forests \cite[Lemma 1.4.4]{SS03}.
Are there interesting relationships 
between pedigrees and forest-based networks? 
Also, we could consider a generalisation of tree-based
{\em unrooted} phylogenetic networks 
which were first considered in \cite{francis2018tree}.
In particular, an {\em unrooted forest-based network} $N$
is an unrooted phylogenetic network on $X$ (as defined in \cite{francis2018tree})
that contains a spanning forest with leaf-set $X$ such that 
no edge in $N$ has both of its vertices in the same tree of the forest.
What properties do such networks  enjoy? 
%\green{It might also be of interest to see if Lemma~\ref{lem:bad} holds 
%if the sets $\mathcal C(N)$ and  $\mathcal C(N')$
%are replaced by multisets. }

%\section{Acknowledgement}
%{\bf Any acknowledgements?}

%\section*{References}\label{sec:refs}
\bibliographystyle{abbrvnat}
\bibliography{bibliography}

\begin{thebibliography}{25}
\providecommand{\natexlab}[1]{#1}
\providecommand{\url}[1]{\texttt{#1}}
\expandafter\ifx\csname urlstyle\endcsname\relax
  \providecommand{\doi}[1]{doi: #1}\else
  \providecommand{\doi}{doi: \begingroup \urlstyle{rm}\Url}\fi

\bibitem[Cardona et~al.(2008)Cardona, Rossell{\'o}, and
  Valiente]{cardona2008comparison}
G.~Cardona, F.~Rossell{\'o}, and G.~Valiente.
\newblock Comparison of tree-child phylogenetic networks.
\newblock \emph{IEEE/ACM Transactions on Computational Biology and
  Bioinformatics}, 6\penalty0 (4):\penalty0 552--569, 2008.

\bibitem[Fischer and Francis(2020)]{fischer2020space}
M.~Fischer and A.~Francis.
\newblock The space of tree-based phylogenetic networks.
\newblock \emph{Bulletin of Mathematical Biology}, 82:\penalty0 1--17, 2020.

\bibitem[Francis et~al.(2018{\natexlab{a}})Francis, Huber, and
  Moulton]{francis2018tree}
A.~Francis, K.~T. Huber, and V.~Moulton.
\newblock Tree-based unrooted phylogenetic networks.
\newblock \emph{Bulletin of Mathematical Biology}, 80\penalty0 (2):\penalty0
  404--416, 2018{\natexlab{a}}.

\bibitem[Francis et~al.(2018{\natexlab{b}})Francis, Semple, and Steel]{FSS18}
A.~Francis, C.~Semple, and M.~Steel.
\newblock New characterisations of tree-based networks and proximity measures.
\newblock \emph{Advances in Applied Mathematics}, 93:\penalty0 93--107,
  2018{\natexlab{b}}.

\bibitem[Francis and Steel(2015)]{FS15}
A.~R. Francis and M.~Steel.
\newblock Which phylogenetic networks are merely trees with additional arcs?
\newblock \emph{Systematic Biology}, 64\penalty0 (5):\penalty0 768--777, 2015.

\bibitem[Hayamizu(2016)]{H16}
M.~Hayamizu.
\newblock On the existence of infinitely many universal tree-based networks.
\newblock \emph{Journal of Theoretical Biology}, 396\penalty0 (7):\penalty0
  204-- 206, 2016.

\bibitem[Huber and Scholz(2020)]{HS20}
K.~T. Huber and G.~E. Scholz.
\newblock Phylogenetic networks that are their own fold-ups.
\newblock \emph{Advances in Applied Mathematics}, 113, 2020.

\bibitem[Huber et~al.(in press)Huber, Moulton, and Scholz]{huber2020overlaid}
K.~T. Huber, V.~Moulton, and G.~Scholz.
\newblock Overlaid species forests.
\newblock \emph{Discrete Applied Mathematics}, in press.

\bibitem[Huson and Kloepper(2007)]{huson2007beyond}
D.~H. Huson and T.~H. Kloepper.
\newblock Beyond galled trees-decomposition and computation of galled networks.
\newblock In \emph{Annual International Conference on Research in Computational
  Molecular Biology}, pages 211--225. Springer, 2007.

\bibitem[Huson et~al.(2010)Huson, Rupp, and Scornavacca]{huson2010phylogenetic}
D.~H. Huson, R.~Rupp, and C.~Scornavacca.
\newblock \emph{Phylogenetic networks: concepts, algorithms and applications}.
\newblock Cambridge University Press, 2010.

\bibitem[Jeong et~al.(2019)Jeong, Arif, Caetano-Anoll{\'e}s, Kim, and
  Nasir]{jeong2019horizontal}
H.~Jeong, B.~Arif, G.~Caetano-Anoll{\'e}s, K.~M. Kim, and A.~Nasir.
\newblock Horizontal gene transfer in human-associated microorganisms inferred
  by phylogenetic reconstruction and reconciliation.
\newblock \emph{Scientific reports}, 9\penalty0 (1):\penalty0 1--18, 2019.

\bibitem[Jetten and van Iersel(2018)]{JvI18}
L.~Jetten and L.~van Iersel.
\newblock Nonbinary tree-based phylogenetic networks.
\newblock \emph{IEEE - ACM Transactions on Computational Biology and
  Bioinformatics}, 15(1):\penalty0 205--217, 2018.

\bibitem[Kunin et~al.(2005)Kunin, Goldovsky, Darzentas, and
  Ouzounis]{kunin2005net}
V.~Kunin, L.~Goldovsky, N.~Darzentas, and C.~A. Ouzounis.
\newblock The net of life: reconstructing the microbial phylogenetic network.
\newblock \emph{Genome Research}, 15\penalty0 (7):\penalty0 954--959, 2005.

\bibitem[Makarenkov(2001)]{makarenkov2001t}
V.~Makarenkov.
\newblock T-rex: reconstructing and visualizing phylogenetic trees and
  reticulation networks.
\newblock \emph{Bioinformatics}, 17\penalty0 (7):\penalty0 664--668, 2001.

\bibitem[Makarenkov et~al.(2021)Makarenkov, Mazoure, Rabusseau, and
  Legendre]{makarenkov2021horizontal}
V.~Makarenkov, B.~Mazoure, G.~Rabusseau, and P.~Legendre.
\newblock Horizontal gene transfer and recombination analysis of sars-cov-2
  genes helps discover its close relatives and shed light on its origin.
\newblock \emph{BMC Ecology and Evolution}, 21\penalty0 (1):\penalty0 1--18,
  2021.

\bibitem[Nakhleh(2004)]{nakhleh2004phylogenetic}
L.~Nakhleh.
\newblock \emph{Phylogenetic networks}.
\newblock The University of Texas at Austin, 2004.

\bibitem[Pons et~al.(2019)Pons, Semple, and Steel]{pons2019tree}
J.~C. Pons, C.~Semple, and M.~Steel.
\newblock Tree-based networks: characterisations, metrics, and support trees.
\newblock \emph{Journal of Mathematical Biology}, 78\penalty0 (4):\penalty0
  899--918, 2019.

\bibitem[Scholz et~al.(2019)Scholz, Popescu, Taylor, Moulton, and
  Huber]{scholz2019osf}
G.~E. Scholz, A.-A. Popescu, M.~I. Taylor, V.~Moulton, and K.~T. Huber.
\newblock {OSF}-builder: A new tool for constructing and representing
  evolutionary histories involving introgression.
\newblock \emph{Systematic Biology}, 68\penalty0 (5):\penalty0 717--729, 2019.

\bibitem[Semple(2016)]{semple2016phylogenetic}
C.~Semple.
\newblock Phylogenetic networks with every embedded phylogenetic tree a base
  tree.
\newblock \emph{Bulletin of Mathematical Biology}, 78\penalty0 (1):\penalty0
  132--137, 2016.

\bibitem[Semple and Steel(2003)]{SS03}
C.~Semple and M.~Steel.
\newblock \emph{Phylogenetics}, volume~24.
\newblock Oxford University Press on Demand, 2003.

\bibitem[Steel(2016)]{S16}
M.~Steel.
\newblock \emph{Random Processes in Evolution}.
\newblock SIAM-Society for Industrial and Applied Mathematics, 2016.

\bibitem[Steel and Hein(2006)]{steel2006reconstructing}
M.~Steel and J.~Hein.
\newblock Reconstructing pedigrees: a combinatorial perspective.
\newblock \emph{Journal of Theoretical Biology}, 240\penalty0 (3):\penalty0
  360--367, 2006.

\bibitem[van Iersel~L.(2013)]{vi-web}
van Iersel~L.
\newblock Different topological restrictions of rooted phylogenetic networks.
  which make biological sense?
\newblock
  \emph{https://phylonetworks.blogspot.com/2013/03/different-topological-restrictions-of.html},
  2013.

\bibitem[Wallbank et~al.(2016)Wallbank, Baxter, Pardo-Diaz, Hanly, Martin,
  Mallet, Dasmahapatra, Salazar, Joron, Nadeau,
  et~al.]{wallbank2016evolutionary}
R.~W. Wallbank, S.~W. Baxter, C.~Pardo-Diaz, J.~J. Hanly, S.~H. Martin,
  J.~Mallet, K.~K. Dasmahapatra, C.~Salazar, M.~Joron, N.~Nadeau, et~al.
\newblock Evolutionary novelty in a butterfly wing pattern through enhancer
  shuffling.
\newblock \emph{PLoS biology}, 14\penalty0 (1):\penalty0 e1002353, 2016.

\bibitem[Zhang(2016)]{Z16}
L.~Zhang.
\newblock On tree-based phylogenetic networks.
\newblock \emph{Journal of Computational Biology}, 23\penalty0 (7):\penalty0
  553--65, 2016.

\end{thebibliography}

\end{document}